\setlist{topsep=0ex}
\def\namedlabel#1#2{\begingroup
    #2%
    \def\@currentlabel{#2}%
    \phantomsection\label{#1}\endgroup
}
    \DeclareMathOperator{\dom}{{\rm dom}}
    \newcommand{\Ewf}{\mathcal{E}}
    \newcommand{\Iwf}{\mathcal{I}}
    \newcommand{\Jwf}{\mathcal{J}}
    \newcommand{\Mwf}{\mathcal{M}}
    \newcommand{\Nwf}{\mathcal{N}}
    \newcommand{\Pwf}{\mathcal{P}}
    \newcommand{\Scal}{\mathcal{S}}
    \newcommand{\Swf}{\mathcal{S}}
    \newcommand{\bfrak}{\mathfrak{b}}
    \newcommand{\cfrak}{\mathfrak{c}}
    \newcommand{\dfrak}{\mathfrak{d}}
    \newcommand{\rfrak}{\mathfrak{r}}
    \newcommand{\sfrak}{\mathfrak{s}}
    \newcommand{\efrak}{\mathfrak{e}}
    \newcommand{\Bor}{\mathbb{B}}
    \newcommand{\Cbb}{\mathbb{C}}
    \newcommand{\Cor}{\mathbb{C}}
    \newcommand{\Dor}{\mathbb{D}}
    \newcommand{\Eor}{\mathbb{E}}
    \newcommand{\Mor}{\mathbb{M}}
    \newcommand{\Pbb}{\mathbb{P}}
    \newcommand{\Por}{\mathbb{P}}
    \newcommand{\LOCor}{\mathds{LOC}}
    \newcommand{\Mior}{\mathbb{MI}}
    \newcommand{\Pror}{\mathbb{PR}}
    \newcommand{\menos}{\smallsetminus}
    \newcommand{\vacio}{\varnothing}
    \DeclareMathOperator{\pts}{\mathcal{P}}
    \newcommand{\R}{\mathbb{R}}
    \newcommand{\la}{\langle}
    \newcommand{\ra}{\rangle}
    \DeclareMathOperator{\add}{\mathrm{add}}
    \DeclareMathOperator{\non}{\mbox{\rm non}}
    \DeclareMathOperator{\cov}{\mbox{\rm cov}}
    \DeclareMathOperator{\cof}{\mbox{\rm cof}}
    \DeclareMathOperator{\Seq}{\mathrm{seq}}
    \newcommand{\seq}[2]{\la #1 :\, #2\ra}
    \newcommand{\set}[2]{\{#1 :\, #2\}}
    \newcommand{\blc}{\mathfrak{b}^{\mathrm{Lc}}}
    \newcommand{\dlc}{\mathfrak{d}^{\mathrm{Lc}}}
    \newcommand{\Fr}{\mathrm{Fr}}
    \newcommand{\Lb}{\mathrm{Lb}}
    \newcommand{\gen}{\mathrm{gn}}
\newcommand{\baire}{\omega^\omega}
\newcommand{\cantor}{2^\omega}
\newcommand{\tilb}{\tilde b}
\newcommand{\largeset}[2]{\bigg\{#1:\; #2\bigg\}}
    \definecolor{carrotorange}{rgb}{0.93, 0.57, 0.13}
    \definecolor{dodger}{rgb}{0.0,0.5,1.0}
    \newcommand{\Swfw}{\Swf^{\mathrm{w}}}
\definecolor{sub0}{RGB}{29,32,137}
\definecolor{sub1}{RGB}{1,71,157}
\definecolor{sub2}{RGB}{1,104,183}
\definecolor{sub3}{RGB}{0,160,234}
\definecolor{sug}{RGB}{0,154,68}
\definecolor{suy}{RGB}{208,219,1}
\newcommand{\subiii}[1]{{\color{sub3}#1}}
\title{The cardinal characteristics of the ideal
generated by the $F_\sigma$ measure zero subsets of the reals}
\author{Miguel A.~Cardona%
\thanks{Email: \href{mailto:miguel.cardona@upjs.sk}{\texttt{miguel.cardona@upjs.sk}}\newline
}
}
\date{{\normalsize
${}^*$Institute of Mathematics, Pavol Jozef \v{S}af\'arik University,\\
041 80, Jesenn\'a 5, 040 01 Ko\v{s}ice, Slovakia\\\medskip
}
}
\begin{document}

\makeatletter
\def\@roman#1{\romannumeral #1}
\makeatother

\newcounter{enuAlph}
\renewcommand{\theenuAlph}{\Alph{enuAlph}}

\numberwithin{equation}{section}
\renewcommand{\theequation}{\thesection.\arabic{equation}}

\theoremstyle{plain}
  \newtheorem{theorem}[equation]{Theorem}
  \newtheorem{corollary}[equation]{Corollary}
  \newtheorem{lemma}[equation]{Lemma}
  \newtheorem{mainlemma}[equation]{Main Lemma}
  \newtheorem{prop}[equation]{Proposition}
  \newtheorem{clm}[equation]{Claim}
  \newtheorem{fct}[equation]{Fact}
  \newtheorem{question}[equation]{Question}
  \newtheorem{problem}[equation]{Problem}
  \newtheorem{conjecture}[equation]{Conjecture}
  \newtheorem*{thm}{Theorem}
  \newtheorem{teorema}[enuAlph]{Theorem}
  \newtheorem*{corolario}{Corollary}
  \newtheorem*{scnmsc}{(SCNMSC)}
\theoremstyle{definition}
  \newtheorem{definition}[equation]{Definition}
  \newtheorem{example}[equation]{Example}
  \newtheorem{remark}[equation]{Remark}
  \newtheorem{notation}[equation]{Notation}
  \newtheorem{context}[equation]{Context}
  \newtheorem{exer}[equation]{Exercise}
  \newtheorem{exerstar}[equation]{Exercise*}

  \newtheorem*{defi}{Definition}
  \newtheorem*{acknowledgements}{Acknowledgements}
  
\def\sectionautorefname{Section}
\def\subsectionautorefname{Subsection}

\maketitle

\begin{abstract}

Let $\mathcal{E}$ be the ideal generated by the $F_\sigma$ measure zero subsets of the reals. The purpose of this survey paper is to study the cardinal characteristics (the additivity, covering number, uniformity, and cofinality) of $\mathcal{E}$. 
\end{abstract}









\section{Introduction}

Let $\Mwf$ and $\Nwf$, as usual, denote the $\sigma$-ideal of first category subsets of $\R$ and the $\sigma$-ideal of Lebesgue null subsets of $\R$, respectively, and let $\Ewf$ be the ideal generated by the $F_\sigma$ measure zero subsets of $\R$. It is well-known that $\Ewf\subseteq\Nwf\cap\Mwf$. Even more, it was proved that $\Ewf$ is a proper subideal of $\Nwf\cap\Mwf$ (see~\cite[Lemma 2.6.1]{BJ}).

For $f,g\in\omega^\omega$ define
\[f\leq^*g\textrm{\ iff\ }\exists m<\omega\,\forall n\geq m\colon f(n)\leq g(n).\]
Let
\[\bfrak:=\min\{|F|:F\subseteq \omega^\omega\text{\ and }\neg\exists y\in \omega^\omega\, \forall x\in F\colon x\leq^* y\}\]
the \emph{bounding number}, and let
\[\dfrak:=\min\{|D|:D\subseteq \omega^\omega\text{\ and }\forall x\in \omega^\omega\, \exists y\in D\colon x\leq^* y\}\]
the \emph{dominating number}. As usual, $\cfrak:=2^\omega$ denotes the \emph{size of the continuum}.

Let $\Iwf$ be an ideal of subsets of $X$ such that $\{x\}\in \Iwf$ for all $x\in X$. Throughout this paper, we demand that all ideals satisfy this latter requirement. We introduce the following four \emph{cardinal characteristics associated with $\Iwf$}: 
\begin{align*}
 \add(\Iwf)&=\min\largeset{|\Jwf|}{\Jwf\subseteq\Iwf,\,\bigcup\Jwf\notin\Iwf},\\
 \cov(\Iwf)&=\min\largeset{|\Jwf|}{\Jwf\subseteq\Iwf,\,\bigcup\Jwf=X},\\
 \non(\Iwf)&=\min\set{|A|}{A\subseteq X,\,A\notin\Iwf},\textrm{\ and}\\
 \cof(\Iwf)&=\min\set{|\Jwf|}{\Jwf\subseteq\Iwf,\ \forall\, A\in\Iwf\ \exists\, B\in \Jwf\colon A\subseteq B}.
\end{align*}
These cardinals are referred to as the \emph{additivity, covering, uniformity} and \emph{cofinality of $\Iwf$}, respectively. The relationship between the cardinals defined above is illustrated in \autoref{diag:idealI}.

\begin{figure}[ht!]
\centering
\begin{tikzpicture}[scale=1.5]
\small{
\node (azero) at (-1,1) {$\aleph_0$};
\node (addI) at (1,1) {$\add(\Iwf)$};
\node (covI) at (2,2) {$\cov(\Iwf)$};
\node (nonI) at (2,0) {$\non(\Iwf)$};
\node (cofI) at (4,2) {$\cof(\Iwf)$};
\node (sizX) at (4,0) {$|X|$};
\node (sizI) at (5,1) {$|\Iwf|$};

\draw (azero) edge[->] (addI);
\draw (addI) edge[->] (covI);
\draw (addI) edge[->] (nonI);
\draw (covI) edge[->] (sizX);
\draw (nonI) edge[->] (sizX);
\draw (covI) edge[->] (cofI);
\draw (nonI) edge[->] (cofI);
\draw (sizX) edge[->] (sizI);
\draw (cofI) edge[->] (sizI);
}
\end{tikzpicture}
\caption{Diagram of the cardinal characteristics associated with $\Iwf$. An arrow  $\mathfrak x\rightarrow\mathfrak y$ means that (provably in ZFC) 
    $\mathfrak x\le\mathfrak y$.}
\label{diag:idealI}
\end{figure}
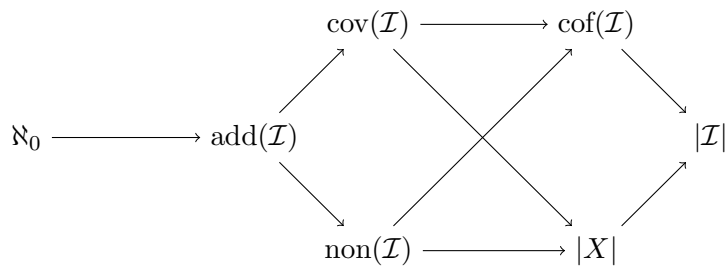

Over the years, research on the relationship between the cardinal characteristics associated with $\Ewf$  and other classical cardinal characteristics of the continuum has been conducted by many people, e.g.~\cite{Mi1981,BS1992,BJ,Brppts,Car23,CMlocalc,GaMe}. Most of them are illustrated in~\autoref{CichonplusE}, except for those that appear in~\autoref{BarJu:sr}: 

\begin{theorem}[{\cite{BS1992}, see also \cite[Thm.~2.6.9]{BJ}}]\label{ThmZFC:E}
\

\begin{enumerate}[label=\rm(\arabic*)]
    \item  $\max\{\cov(\Mwf),\cov(\Nwf)\}\leq\cov(\Ewf)\leq\max\{\dfrak,\cov(\Nwf)\}$.

    \item $\min\{\bfrak,\non(\Nwf)\}\leq\non(\Ewf)\leq\min\{\non(\Mwf),\non(\Nwf)\}$. 

    \item $\add(\Ewf)=\add(\Mwf)$ and $\cof(\Ewf)=\cof(\Mwf)$.
\end{enumerate}
\end{theorem}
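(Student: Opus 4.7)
The plan is to prove (1) and (2) as dual statements and then deduce (3) by combining them with the Miller--Truss identities $\add(\Mwf)=\min\{\bfrak,\cov(\Mwf)\}$ and $\cof(\Mwf)=\max\{\dfrak,\non(\Mwf)\}$. In each of (1) and (2), the inequality involving $\Mwf\cap\Nwf$ is immediate from $\Ewf\subseteq\Mwf\cap\Nwf$, while the one involving $\bfrak,\dfrak$ requires a combinatorial parametrization of $F_\sigma$ null sets.

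For the easy halves: since $\Ewf\subseteq\Mwf\cap\Nwf$, any $\Ewf$-cover of $\R$ is simultaneously an $\Mwf$-cover and an $\Nwf$-cover, and any non-$\Mwf$ or non-$\Nwf$ set lies outside $\Ewf$. This yields $\cov(\Ewf)\ge\max\{\cov(\Mwf),\cov(\Nwf)\}$ and $\non(\Ewf)\le\min\{\non(\Mwf),\non(\Nwf)\}$.

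For the hard half of (1), the plan is to introduce the following parametrization. Given a null $G_\delta$ set $G=\bigcap_n U_n$ with $\mu(U_n)<2^{-n}$, decompose each $U_n$ as a disjoint countable union $\bigcup_k I^n_k$ of basic clopen sets, and for $f\in\omega^\omega$ set
\[
E_f(G):=\bigcup_{m\in\omega}\bigcap_{n\ge m}\bigcup_{k<f(n)}I^n_k.
\]
A direct verification will show that $E_f(G)$ is $F_\sigma$ and null (the increasing inner intersections have measure at most $2^{-m}$), that $f\mapsto E_f(G)$ is monotone in $\le^*$, and that for each $x\in G$ the natural witness $f_x(n):=k+1$ (where $x\in I^n_k$) satisfies $x\in E_{f_x}(G)$. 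Hence, for any dominating family $D$ of size $\dfrak$, $G=\bigcup_{f\in D}E_f(G)$. Combining with an $\Nwf$-cover $\{G_\alpha:\alpha<\cov(\Nwf)\}$ of $\R$ by null $G_\delta$'s yields an $\Ewf$-cover of $\R$ of size $\max\{\dfrak,\cov(\Nwf)\}$.

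The hard half of (2) is dual: given $A\subseteq\R$ with $|A|<\min\{\bfrak,\non(\Nwf)\}$, use $|A|<\non(\Nwf)$ to embed $A$ in some null $G_\delta$ set $G$, pick $g\in\omega^\omega$ bounding $\{f_x:x\in A\}$ (possible since $|A|<\bfrak$), and conclude $A\subseteq E_g(G)\in\Ewf$ by monotonicity.

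For part (3), combine (1), (2), the general inequalities $\add\le\cov$ and $\non\le\cof$ for $\Ewf$, and the Miller--Truss identities. One direction, $\add(\Ewf)\le\add(\Mwf)$ and $\cof(\Mwf)\le\cof(\Ewf)$, requires separate proofs of $\add(\Ewf)\le\bfrak$ and $\add(\Ewf)\le\cov(\Mwf)$ (and their duals), exhibiting witnessing families of $F_\sigma$ null sets via the $E_f(G)$-construction and translation arguments. The reverse direction, $\add(\Mwf)\le\add(\Ewf)$ and $\cof(\Ewf)\le\cof(\Mwf)$, is the technical heart: fewer than $\min\{\bfrak,\cov(\Mwf)\}$ many $F_\sigma$ null sets can be absorbed into a single $E_g(G)$ by gluing their $G_\delta$-envelopes and bounding their parameters with a single $g\in\omega^\omega$.

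The main obstacle will lie in part (3): the $E_f(G)$-parametrization is a per-$G_\delta$ construction, so extending it uniformly across many envelopes (as is needed to pass through Miller--Truss to simultaneous control over $F_\sigma$ null sets) requires a delicate combination of the bounding and covering combinatorics, rather than a direct import from the bounds in (1) and (2).
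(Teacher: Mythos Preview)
The paper does not prove this theorem; it is quoted from \cite{BS1992} and \cite{BJ} and used as a black box. Your treatment of parts~(1) and~(2) via the sets $E_f(G)$ is exactly the standard argument and is correct; the paper's own Lemmas~\ref{toolE}--\ref{lem:mon} develop a closely related parametrization by pairs $(b,\varphi)$ serving the same purpose.

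There is, however, a genuine gap in your plan for the direction $\add(\Mwf)\le\add(\Ewf)$ in part~(3). You propose to absorb fewer than $\min\{\bfrak,\cov(\Mwf)\}$ many $F_\sigma$ null sets $A_\alpha$ into a single $E_g(G)$ by ``gluing their $G_\delta$-envelopes'' into one null $G_\delta$ set $G$. This step fails: the inequality $\add(\Nwf)<\add(\Mwf)$ is consistent (e.g.\ in the Hechler model), so the union of fewer than $\add(\Mwf)$ many null $G_\delta$ envelopes need not be null, and you cannot produce the single null $G$ your construction requires. In fact, the very statement that $\bigcup_\alpha A_\alpha$ is null is part of what must be proved, so assuming a null envelope is circular.

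The standard proof avoids this obstruction by working with the combinatorial data rather than with measure-theoretic envelopes. Using the $H_{b,\varphi}$ parametrization (Lemma~\ref{lem:combE}), one first invokes $\kappa<\bfrak$ and the monotonicity lemma (Lemma~\ref{lem:mon}) to pass to a common interval partition~$b^*$, and then handles the slaloms $\varphi_\alpha$ by a Baire-category argument exploiting $\kappa<\cov(\Mwf)$; equivalently, one exhibits a Tukey equivalence between $(\Ewf,\subseteq)$ and $(\Mwf,\subseteq)$ via the combinatorial characterizations of both ideals. This is the ``delicate combination'' you anticipate, but it is a category argument rather than a measure argument, which is precisely why it sidesteps the $\add(\Nwf)$ issue that blocks your envelope-gluing approach.
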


See~\cite{blass} for the definitions of the following cardinals. Denote by
\begin{itemize}
    
    
    
    
    \item $\efrak$ the \emph{evasion number}.
    
    
    
    
    
    \item $\rfrak$ the \emph{reaping number}.

    \item  $\sfrak$ the \emph{splitting number}.

\end{itemize}

\begin{theorem}\label{BarJu:sr}
\ 

\begin{enumerate}[label=\rm(\arabic*)]
    \item \emph{\cite[Lem.~7.4.3]{BJ}} $\sfrak\leq\non(\Ewf)$ and $\cov(\Ewf)\leq\rfrak$. 

    \item\emph{\cite{BrendlevasionI}} $\efrak\leq\non(\Ewf)$.
\end{enumerate}    
\end{theorem}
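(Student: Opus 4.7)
My plan is to pass to Cantor space $\cantor$, where the cardinal characteristics of $\Ewf$ are unchanged, and to associate to every combinatorial witness of $\rfrak$, $\sfrak$ or $\efrak$ an explicit $F_\sigma$ measure zero set. The basic building block is the following: for each infinite $a \subseteq \omega$ and each $i \in \{0,1\}$, I set
\[
E_a^i = \{x \in \cantor : x(n) = i \text{ for almost every } n \in a\} = \bigcup_{k < \omega} \{x \in \cantor : \forall n \in a \setminus k,\ x(n) = i\}.
\]
Each piece in the union is closed and has measure $\prod_{n \in a \setminus k} \tfrac{1}{2} = 0$, so $E_a^i \in \Ewf$.

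To prove $\cov(\Ewf) \leq \rfrak$, I would fix a reaping family $R \subseteq [\omega]^\omega$ of size $\rfrak$. Given $x \in \cantor$ with $x^{-1}(1)$ infinite (the eventually zero sequences form a countable set, hence lie in $\Ewf$), the reaping property applied to $x^{-1}(1)$ yields some $a \in R$ for which either $a \subseteq^* x^{-1}(1)$, so $x \in E_a^1$, or $a \cap x^{-1}(1) =^* \vacio$, so $x \in E_a^0$. Thus the $\rfrak$-many $F_\sigma$ null sets $\{E_a^0, E_a^1 : a \in R\}$, together with a countable correction, cover $\cantor$.

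Dually, for $\sfrak \leq \non(\Ewf)$, I would take $A \subseteq \cantor$ with $|A| < \sfrak$, discard the countably many eventually constant sequences, and consider $\{x^{-1}(1) : x \in A\}$, a family of infinite subsets of $\omega$ of cardinality less than $\sfrak$. By definition of $\sfrak$ this family is not splitting, so some $T \in [\omega]^\omega$ is unsplit by every member: for each $x \in A$, either $T \cap x^{-1}(1)$ or $T \setminus x^{-1}(1)$ is finite, i.e.\ $x \in E_T^0 \cup E_T^1$. Hence $A \subseteq E_T^0 \cup E_T^1 \in \Ewf$.

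Finally, for (2) I would use the same strategy with predictors in place of splitting or reaping witnesses. If $A \subseteq \cantor$ has cardinality less than $\efrak$, then by definition of the evasion number there is a single predictor $\pi$ such that for every $x \in A$, $\pi(x \restriction n) = x(n)$ holds for almost all $n$. The set $\mathrm{Pr}(\pi) = \bigcup_{k<\omega}\{x \in \cantor : \forall n \geq k,\ x(n) = \pi(x \restriction n)\}$ is $F_\sigma$, and each piece has at most $2^k$ elements since $x \restriction k$ determines the full sequence; so $\mathrm{Pr}(\pi) \in \Ewf$ and contains $A$. None of these three arguments poses a serious difficulty; the only technical point worth acknowledging is that $\cov$, $\non$, $\add$, $\cof$ of $\Ewf$ are invariant under measure-isomorphism, which legitimises the transfer from $\R$ to $\cantor$ that all three computations rely on.
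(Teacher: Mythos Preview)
The paper does not prove this theorem; it merely cites \cite[Lem.~7.4.3]{BJ} and \cite{BrendlevasionI}. So there is no in-paper argument to compare against, and your proposal must stand on its own.

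Your arguments for part~(1) are correct and are essentially the standard ones. The case distinctions you make (discarding eventually constant sequences, worrying about whether $x^{-1}(1)$ is infinite) are harmless but unnecessary: a finite or cofinite set never splits anything, so the reaping and splitting arguments go through uniformly for all $x\in 2^\omega$.

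There is a genuine, if minor, gap in your proof of part~(2). The evasion number $\efrak$ as defined in \cite{blass}---the reference the paper uses---allows \emph{partial} predictors $\pi=(D,\langle\pi_n:n\in D\rangle)$ with $D\in[\omega]^{\aleph_0}$ possibly proper; prediction of $x$ means $\pi_n(x{\restriction}n)=x(n)$ for all but finitely many $n\in D$. From $|A|<\efrak$ you only obtain such a partial predictor, and then your claim that each closed piece of $\mathrm{Pr}(\pi)$ has at most $2^k$ elements is false: the coordinates outside $D$ are unconstrained, so the pieces are typically uncountable. The fix is easy but different from what you wrote: the $k$-th piece $C_k=\{x\in 2^\omega:\forall n\in D\smallsetminus k,\ x(n)=\pi_n(x{\restriction}n)\}$ is the body of a tree in which every node of length $n\in D\smallsetminus k$ has a unique successor, so $\Lb(C_k)\leq \prod_{n\in D\smallsetminus k}\tfrac12=0$. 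With this correction your argument for (2) goes through.
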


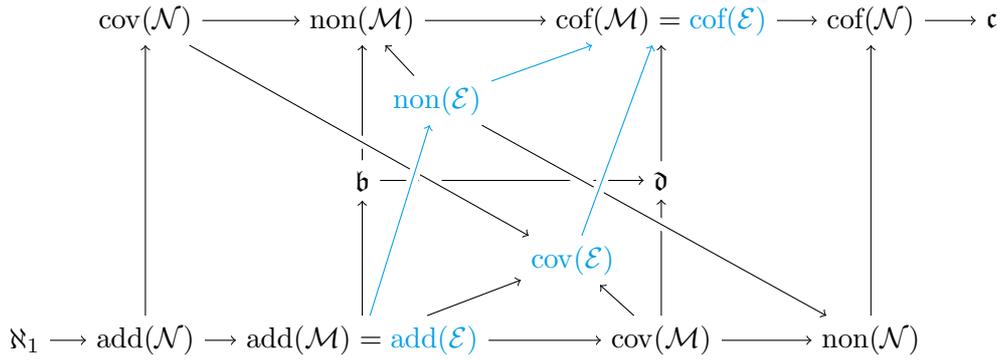
\begin{figure}[ht!]
\centering
\begin{tikzpicture}[scale=1.06]
\small{
\node (aleph1) at (-1,3) {$\aleph_1$};
\node (addn) at (0.5,3){$\add(\Nwf)$};
\node (covn) at (0.5,7){$\cov(\Nwf)$};
\node (nonn) at (9.5,3) {$\non(\Nwf)$} ;
\node (cfn) at (9.5,7) {$\cof(\Nwf)$} ;
\node (addm) at (3.19,3) {$\add(\Mwf)=\subiii{\add(\Ewf)}$} ;
\node (covm) at (6.9,3) {$\cov(\Mwf)$} ;
\node (nonm) at (3.19,7) {$\non(\Mwf)$} ;
\node (cfm) at (6.9,7) {$\cof(\Mwf)=\subiii{\cof(\Ewf)}$} ;
\node (b) at (3.19,5) {$\bfrak$};
\node (d) at (6.9,5) {$\dfrak$};
\node (c) at (11,7) {$\cfrak$};
\node (none) at (4.12,6) {\subiii{$\non(\Ewf)$}};
\node (cove) at (5.8,4) {\subiii{$\cov(\Ewf)$}};
\draw (aleph1) edge[->] (addn)
      (addn) edge[->] (covn)
      (covn) edge [->] (nonm)
      (nonm)edge [->] (cfm)
      (cfm)edge [->] (cfn)
      (cfn) edge[->] (c);

\draw
   (addn) edge [->]  (addm)
   (addm) edge [->]  (covm)
   (covm) edge [->]  (nonn)
   (nonn) edge [->]  (cfn);
\draw (addm) edge [->] (b)
      (b)  edge [->] (nonm);
\draw (covm) edge [->] (d)
      (d)  edge[->] (cfm);
\draw (b) edge [->] (d);

\draw   (none) edge [->] (nonm)
        (none) edge [sub3,->] (cfm)
        (addm) edge [->] (cove);
      
\draw (none) edge [line width=.15cm,white,-] (nonn)
      (none) edge [->] (nonn);
      
\draw (cove) edge [line width=.15cm,white,-] (covn)
      (cove) edge [<-] (covm)
      (cove) edge [<-] (covn);

\draw (addm) edge [line width=.15cm,white,-] (none)
      (addm) edge [sub3,->] (none); 

\draw (cove) edge [line width=.15cm,white,-] (cfm)
      (cove) edge [sub3,->] (cfm);
}
\end{tikzpicture}
\caption{Cicho\'n's diagram and the cardinal characteristics associated with $\Ewf$.}
\label{CichonplusE}
\end{figure}

The goal of this survey paper is to prove that the next diagram of the cardinal characteristics of $\Ewf$ can be distinguished at each position by constructing a model using the forcing technique. In this diagram, a dotted line means that we can obtain a model in which
the cardinal characteristics of the left side are strictly smaller than the cardinal characteristics of the right side. The result attached to a dotted line is the lemma in which the consistency of the inequality is proved.

\begin{figure}[ht!]
\centering
\begin{tikzpicture}[scale=1.5]
\small{
\node (azero) at (-0.75,1) {$\aleph_1$};
\node (addI) at (1,1) {$\add(\Ewf)$};
\node (covI) at (3,2.5) {$\cov(\Ewf)$};
\node (nonI) at (3,-0.5) {$\non(\Ewf)$};
\node (cofI) at (5,1) {$\cof(\Ewf)$};
\node (sizI) at (6.5,1) {$\cfrak$};

\draw (azero) edge[->] (addI);
\draw (addI) edge[->] (covI);
\draw (addI) edge[->] (nonI);
\draw (covI) edge[->] (cofI);
\draw (nonI) edge[->] (cofI);
\draw (cofI) edge[->] (sizI);
\draw[color=sug,line width=.05cm,dashed,->] (0,-1)--(0,3);
\draw[color=sug,line width=.05cm,dashed,->] (2,-1)--(2,3);
\draw[color=sug,line width=.05cm,dashed,->] (4,-1)--(4,3);
\draw[color=sug,line width=.05cm,dashed,->] (5.9,-1)--(5.9,3);
\draw[color=sug,line width=.05cm,dashed,<-](5.6,-1)--(.6,3);
\draw[color=sug,line width=.05cm,dashed,->] (5.6,3)--(.6,-1);
\node at (0,3.25) {$\textbf{(a)}$};
\node at (5.9,3.25) {$\textbf{(b)}$};
\node at (0.7,-1.25) {\autoref{cov<cof}};
\node at (5.65,-1.25) {\autoref{non<cov}};
\node at (2,3.25) {\autoref{add<non=cov}};
\node at (4,3.25) {\autoref{cov=non<cof}};
}
\end{tikzpicture}
\end{figure}

In this diagram, there are two results that we won't be proved. These are represented by (a) and  (b) in the above diagram. (a) is the consistency of $\aleph_1<\add(\Ewf)$ and (b) is the consistency of $\cof(\Ewf)<\cfrak$. Both consistencies hold in the model $\Dor_\pi$, which is obtained by a FS iteration of length $\pi=\lambda\kappa$ of Hechler forcing where $\aleph_1\leq\kappa\leq\lambda=\lambda^{\aleph_0}$ with $\kappa$ regular (see e.g.~\cite[Thm.~5]{mejiamatrix}).  There,  $\add(\Mwf)=\cof(\Mwf)=\kappa$ and $\cfrak=\lambda$, so by using~\autoref{ThmZFC:E}, $\Dor_\pi$ forces $\add(\Ewf)=\cof(\Ewf)=\kappa$.

\noindent\textbf{Notation.} A~\emph{forcing notion} is a pair $\la\Por,\leq\ra$ where $\Por\neq\emptyset$ and $\leq$ is a relation on~$\Por$ satisfying reflexivity and 
transitivity. We also use the expression pre-ordered set (abbreviated p.o.\ set or just poset) to refer to a forcing notion. The elements of $\Por$ are called conditions and we say that a condition $q$ is \emph{stronger} than a condition $p$ if $q\leq p$.

\begin{definition}
Let $\Por$ be a forcing notion. 
\begin{enumerate}[label=\rm(\arabic*)]
    \item Say that $p, q\in\Por$ are \emph{compatible} (in $\Por$), denoted by $p\,\|_\Por\,q$, if $\exists r\in\Por\colon r\leq p\textrm{\ and\ }r\leq q$. Say that $p, q\in\Por$ are \emph{incompatible} (in $\Por$) if they are not compatible in $\Por$, which is
denoted by $p\perp_\Por q$. 

When $\Por$ is clear from the context, we just write $p\,\|\,q$ and $p\perp q$.

    \item Say that $A\subseteq\Por$ is an \emph{antichain} if $p, q\in\Por\colon p\neq q\Rightarrow p\perp q$. $A$ is a \emph{maximal antichain} on $\Por$ iff $A$ is an antichain and $\forall p\in\Por\,\exists q\in A\colon p\,\|_\Por\,q$.
    
    \item Say that $D\subseteq\Por$ is \emph{dense} (in $\Por$) if $p\in\Por\,\exists q\in D\colon q\leq p$.

    \item Say that $G\subseteq\Por$ is a \emph{$\Por$-filter} if it satisfies
    \begin{enumerate}
        \item $G\neq\emptyset$;

        \item for all $p, q\in G$ there is some $r\in G$ such that $r\leq p$ and $r\leq q$; and 

        \item if $p\in\Por$, $q\in G$ and $q\leq p$, then $p\in G$.
    \end{enumerate}

    \item Let $\mathcal D$ be a family of dense subsets of $\Por$. Say that $G\subseteq\Por$ is \emph{$\Por$-generic over $V$} if $G$ is
a $\Por$-filter and $\forall D\in\mathcal D\cap V\colon G\cap D\neq\emptyset$. Denote by $\dot G_\Por$ the canonical name of the generic set. When $\Por$ is clear from the context, we just
write  $\dot G$.
\end{enumerate}
\end{definition}

\begin{fct}[{\cite{Gold}}]\label{basfor}
  Let $\Por$ be a forcing notion. Let $p, q\in\Por$. 
   \begin{enumerate}[label=\rm(\arabic*)]
\item $p\perp q$ iff $q\Vdash p\notin\dot G$.\smallskip

\item $G\subseteq\Por$ is a $\Por$-generic over $V$ iff for every maximal antichain $A\in V$, $|G\cap A|=1$.
   \end{enumerate}
\end{fct}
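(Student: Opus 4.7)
The plan is to handle the two items separately, using in each case only the definitions together with the standard facts that for every $r\in\Por$ one has $r\Vdash r\in\dot G$, and that for a $\Por$-generic $G$ the evaluation satisfies $\dot G[G]=G$.

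For item (1), I would argue both directions by contradiction. ($\Rightarrow$): Suppose $p\perp q$ and, toward a contradiction, take $r\leq q$ with $r\Vdash p\in\dot G$. Any common extension of $r$ and $p$ would also lie below $q$, contradicting $p\perp q$; hence $r\perp p$. On the other hand, in any generic filter $G$ through $r$, both $r$ and $p$ belong to $G$ (the latter by $r\Vdash p\in\dot G$ and $\dot G[G]=G$), so the filter property of $G$ yields a common extension of $r$ and $p$, contradicting $r\perp p$. ($\Leftarrow$): Assume $q\Vdash p\notin\dot G$ and suppose, for contradiction, that $p\,\|\,q$; pick $r\leq p$ with $r\leq q$. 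Then $r\leq p$ gives $r\Vdash p\in\dot G$ (any generic through $r$ contains $p$ by upward closure), while $r\leq q$ gives $r\Vdash p\notin\dot G$, contradiction.

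For item (2), ($\Rightarrow$) is direct from the definition of genericity. Given a maximal antichain $A\in V$, the set $D_A:=\{r\in\Por:\exists a\in A\ (r\leq a)\}$ is dense in $\Por$ by maximality of $A$. Genericity gives some $r\in G\cap D_A$; picking $a\in A$ with $r\leq a$, upward closure of $G$ (as a filter) yields $a\in G\cap A$. Uniqueness follows because two distinct elements of $A\cap G$ would be both compatible (being in the filter $G$) and incompatible (being in the antichain $A$). ($\Leftarrow$): Given a dense $D\in V$, apply Zorn's lemma inside $V$ to the family of antichains contained in $D$ to obtain a maximal such antichain $A\subseteq D$. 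A short argument shows $A$ is actually maximal in $\Por$: given $p\in\Por$, density of $D$ yields $q\leq p$ with $q\in D$; if $q$ were incompatible with every $a\in A$, then $A\cup\{q\}\subseteq D$ would contradict the maximality of $A$ among antichains of $D$; hence some $a\in A$ is compatible with $q$, and therefore with $p$. By hypothesis $|G\cap A|=1$, and $A\subseteq D$ gives $G\cap D\neq\emptyset$.

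There is no deep technical obstacle here; the content is foundational. The one place requiring care is the $(\Leftarrow)$ direction of (2), where one must manufacture a maximal antichain inside an arbitrary dense set, and this is exactly the role of Zorn's lemma carried out in $V$. All remaining work is a matter of keeping the semantic picture (generic filters and the evaluation $\dot G[G]=G$) and the syntactic picture (the forcing relation $\Vdash$) consistently in mind.
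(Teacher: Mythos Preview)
The paper does not supply a proof of this fact; it is stated with a citation and only the forward direction of~(2) is invoked later (in the proof of \autoref{thm:a1}). Your argument is the standard one and is correct for item~(1) and for the $(\Rightarrow)$ direction of~(2).

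For the $(\Leftarrow)$ direction of~(2) there is a small gap. The paper's definition of ``$\Por$-generic over $V$'' explicitly requires $G$ to be a $\Por$-filter, but you only verify that $G$ meets every dense set in~$V$; you do not check that $G$ is a filter. This can be repaired directly from the hypothesis $|G\cap A|=1$ for every maximal antichain $A\in V$: any two elements of $G$ must be compatible (otherwise extend the incompatible pair to a maximal antichain and get two points of intersection); $G$ is upward closed (given $p\in G$ with $p\leq q$, extend $\{q\}$ to a maximal antichain $A$; the unique element of $G\cap A$ cannot lie in $A\smallsetminus\{q\}$ by the compatibility just shown, so $q\in G$); and $G$ is directed (for $p,q\in G$ apply your own antichain-inside-a-dense-set construction to the dense set $\{r:r\leq p\text{ and }r\leq q\}\cup\{r:r\perp p\}\cup\{r:r\perp q\}$, and again use pairwise compatibility to rule out the last two alternatives). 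Alternatively, many authors state~(2) with ``$G$ is a filter'' as a standing assumption, in which case your proof is complete as written.
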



Let $I$ be a set. Denote by $\Cor_I$ be the poset that adds Cohen reals indexed by $I$.

Here, as usual, given a formula $\phi$, $\forall^\infty\, n<\omega\colon \phi$ means that all but finitely many natural numbers satisfy $\phi$; $\exists^\infty\, n<\omega\colon \phi$ means that infinitely many natural numbers satisfy $\phi$

\section{The consistency of  \texorpdfstring{$\cov(\Ewf)<\non(\Ewf)$}{} and \texorpdfstring{$\non(\Ewf)=\cov(\Ewf)<\cof(\Ewf)$}{}}

Throughout this section, assume that CH holds in the ground model $V.$

One of the fundamental properties of forcing is the Laver property:

\begin{quote}
  A forcing notion $\Por\in V$ has the \textit{Laver property} if for any $\Por$-generic $G$ over $V$, any function $f\in\omega^\omega\cap V$ and any $\Por$-name $\dot g$ for a member in $\omega^\omega$ such that $\Vdash \dot g\leq^* f$, there exists a function $\varphi\in([\omega]^{<\omega})^\omega\cap V$ such that $\Vdash \dot g(n)\in\varphi(n)$ and $|\varphi(n)|\leq n+1$ for every $n\in\omega$.  
\end{quote} 

\begin{example}\label{examp}
\ 

\begin{enumerate}[label=\rm(\arabic*)]
     \item\label{examp:a} Mathias forcing (see e.g.~\cite[Sec.~7.4A]{BJ}). 

    \item\label{examp:b} Miller forcing (see e.g.~\cite[Sec.~7.3E]{BJ}).

    \item Laver forcing (see e.g.~\cite[Sec.~7.3D]{BJ}).
\end{enumerate}    
\end{example}

We now show that any poset with the Laver property preserves covering families of the ideal $\Ewf$. The following is a combinatorial consequence of the Laver property.

\begin{lemma}
Assume that $\Por$ has the Laver property. Let $\dot x$ be a name for a real in $2^\omega$, and $p\in\Por$. Then there are $q\leq p$ and a closed null set $C\subseteq2^\omega$ such that $q\Vdash\dot x\in C$.
\end{lemma}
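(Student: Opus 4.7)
The plan is to encode $\dot x\in 2^\omega$ as a pointwise bounded $\Por$-name $\dot g\in\baire$ so the Laver property applies, and then convert the resulting ground-model slalom into a closed null set containing $\dot x$. I would first fix in the ground model a partition of $\omega$ into consecutive finite intervals $\seqb{I_n}{n<\omega}$ with $|I_n|=n+2$ (any sufficiently fast growth works), and then define a $\Por$-name $\dot g$ for the member of $\baire$ such that $\dot g(n)$ is the natural number coding the string $\dot x\frestr I_n\in 2^{I_n}$. With $f(n):=2^{|I_n|}$, which lies in $\baire\cap V$, we have $\Vdash \dot g(n)<f(n)$ for every $n$, in particular $\Vdash \dot g\leq^* f$.

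Next I would apply the Laver property of $\Por$ to $\dot g$ to obtain $\varphi\in([\omega]^{<\omega})^\omega\cap V$ with $|\varphi(n)|\leq n+1$ and $\Vdash \dot g(n)\in\varphi(n)$ for every $n$. After replacing $\varphi(n)$ by $\varphi(n)\cap\{0,\dots,2^{|I_n|}-1\}$ I may identify each member of $\varphi(n)$ with a string in $2^{I_n}$, and set
\[
C:=\bigl\{y\in 2^\omega : \forall n<\omega\ y\frestr I_n\in\varphi(n)\bigr\}.
\]
This $C$ lies in $V$ and is closed, being an intersection of clopen cylinders. Since the constraints on the pairwise disjoint intervals $I_n$ are Lebesgue-independent, the measure of $C$ is bounded by
\[
\prod_{n<\omega}\frac{|\varphi(n)|}{2^{|I_n|}}\leq\prod_{n<\omega}\frac{n+1}{2^{n+2}}=0.
\]
Since $\Vdash \dot x\in C$ by construction, I would finish by taking $q:=p$.

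The only creative step is the coding of $\dot x$ as a name pointwise dominated by a ground-model real in $\baire$; once that is done, the translation from the slalom to a closed null set and the measure estimate are routine, and the choice of $|I_n|$ is dictated merely by the need for $|I_n|-\log_2(n+1)\to\infty$ with divergent increments. I do not expect a genuine obstacle, and no proper extension of $p$ is required because the Laver property as formulated produces a single ground-model slalom that works uniformly in every generic extension.
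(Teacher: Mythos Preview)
Your argument follows essentially the same route as the paper: partition $\omega$ into consecutive intervals, code $\dot x\frestr I_n$ as a pointwise bounded name, apply the Laver property to obtain a ground-model slalom, and read off a closed null set $C$. The paper uses $|I_n|=2n+1$ and works directly with $\dot h(n)=\dot x\frestr I_n\in 2^{I_n}$ rather than coding into $\omega$, but these are cosmetic differences; your measure estimate $\prod_n (n+1)/2^{n+2}=0$ is just as clean.

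One caveat: your final remark that $q:=p$ suffices leans on the paper's literal phrasing of the Laver property (with a single $\varphi$ satisfying $\Vdash\dot g(n)\in\varphi(n)$). The paper's own proof, however, passes to $q\leq p$, and the standard formulation of the Laver property only produces $\varphi$ below some extension of $p$; indeed, a global $\varphi$ with $|\varphi(0)|\leq 1$ cannot exist whenever two incompatible conditions decide $\dot g(0)$ differently. So while your reading is internally consistent with the stated definition, you should expect to extend $p$ in practice, exactly as the paper does.
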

\begin{proof}
Let $\la I_n:n\in\omega\ra$ be
a partition of $\omega$ into finite intervals such that $\min(I_0)=0$, $\max(I_n)+1=
\min(I_{n+1})$, and $|I_n|=2n+1$. Define a name $\dot h$ for a function with domain $\omega$ such that the trivial condition forces $\dot h(n)=\dot x{\upharpoonright}I_n$. In particular, $\dot h(n)$ is forced to be an element of $2^{I_n}$. By the Laver property, there are $q\leq p$ in $\Por$ and $\varphi\in\prod_{n<\omega}\Pwf(2^{I_n})$ such that $|\varphi(n)|\leq 2^n$ and $q\Vdash \dot h(n)\in\varphi(n)$ for all $n<\omega$. Hence $q\Vdash \dot x{\upharpoonright}I_n\in\varphi(n)$ for all $n$, so let $C:=\{y\in2^\omega:\forall n<\omega\colon y{\upharpoonright}I_n\in\varphi(n)\}$ (coded in the ground model). It is not hard to prove that $C$ is a closed set, so it remains to see that $C$ has measure zero set. Indeed, 
\begin{align*}
  \Lb(C)&=\prod_{n\in\omega}\Lb\big(\{y\in2^\omega:y{\upharpoonright}I_n\in\varphi(n)\}\big)\\ 
  &=\prod_{n\in\omega}\frac{|\varphi(n)|}{2^{|I_n|}}\\
  &=\prod_{n\in\omega}\frac{2^n}{2^{|I_n|}}\leq\prod_{n\in\omega}\frac{2^n}{2^{2n+1}}=\prod_{n\in\omega}\frac{1}{2^{n+1}}=0.
\end{align*}
By $\Lb$ we denote Lebesgue measure zero on $\cantor$. Since $q\Vdash \dot x\in C$, we are done.
\end{proof}

Employing the earlier result:

\begin{corollary}\label{LaverPrecov}
Assume that $\Por$ has the Laver property. Then $\Por$ preserves covering families of the ideal $\Ewf$.
\end{corollary}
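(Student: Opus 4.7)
The plan is simply to unwrap the conclusion of the previous lemma and notice that, up to one trivial observation, it coincides with the single-step clause used to define ``$\Por$ preserves covering families of $\Ewf$'' in the Bartoszy\'nski--Judah preservation framework. The relevant formulation is: $\Por$ preserves covering families of $\Ewf$ means that for every $\Por$-name $\dot x$ for a real in $\cantor$ and every $p\in\Por$, there exist $q\leq p$ and $F\in \Ewf\cap V$ with $q\Vdash \dot x\in F$. This is the clause that the standard preservation theorems for iterated forcing require in order to conclude that a ground-model covering family of $\Ewf$ remains covering in the generic extension.

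Granting this reading, the proof is a one-line reduction. Fix a $\Por$-name $\dot x$ for an element of $\cantor$ and a condition $p\in\Por$. By the previous lemma, there exist $q\leq p$ and a closed null set $C\in V$ such that $q\Vdash \dot x\in C$. Every closed subset of $\cantor$ is trivially $F_\sigma$, and $C$ has Lebesgue measure zero by construction, so $C$ is an $F_\sigma$ null set and in particular $C\in \Ewf\cap V$. Taking $F:=C$ gives the desired witnessing pair $(q,F)$, which is exactly the preservation clause.

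Essentially no obstacle arises: the substantive content is entirely carried by the previous lemma, whose proof pinned $\dot x$ inside a ground-model closed null set via the slalom $\varphi$ produced by the Laver property. The corollary is then the cosmetic observation that closed null sets are members of $\Ewf$. The only mild point worth flagging is to confirm that the convention of ``preservation of covering families of $\Ewf$'' used in this paper really is the single-step clause above, and not some variant involving maximal antichains or uncountable unions of closed null sets; but the way the lemma is stated, with its conclusion landing directly inside $\Ewf\cap V$, makes it clear that this is the intended reading and no further work is needed.
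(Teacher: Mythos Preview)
Your proposal is correct and matches the paper's approach exactly: the paper gives no proof at all, introducing the corollary with ``Employing the earlier result:'' and treating it as an immediate consequence of the preceding lemma. Your unpacking---that a ground-model closed null set is in particular a ground-model member of $\Ewf$, so every new real lands in some $F\in\Ewf\cap V$---is precisely the intended one-line observation, and your caveat about the meaning of ``preserves covering families'' is well taken but, as you note, resolved by the shape of the lemma and its use in \autoref{cov<cof} and \autoref{cov=non<cof}.
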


As a direct consequence,  we obtain:

\begin{lemma}\label{cov<cof}
Let $\Mor_{\omega_2}$ be a CS (countable support) iteration of length $\omega_2$ of Mathias forcing. Then in $V^{\Mor_{\omega_2}}$, $\add(\Ewf)=\cov(\Ewf)=\aleph_1$, and  $\non(\Ewf)=\cof(\Ewf)=\aleph_2$. 
\end{lemma}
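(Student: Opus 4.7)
The plan is to combine the Laver property preservation (yielding the upper bounds on $\cov(\Ewf)$ and hence on $\add(\Ewf)$) with the classical fact that the Mathias model satisfies $\sfrak = \aleph_2$ (yielding the lower bounds on $\non(\Ewf)$ and hence on $\cof(\Ewf)$). Mathias forcing is proper and has the Laver property by \autoref{examp}\ref{examp:a}; by Shelah's preservation theorem, the Laver property is preserved under CS iterations of proper forcings, so $\Mor_{\omega_2}$ itself has the Laver property. Since $\Mor_{\omega_2}$ is a CS iteration of proper posets of length $\omega_2$ starting from a model of CH (with iterands of size $\aleph_1$), standard bookkeeping gives that it preserves all cardinals and forces $\cfrak = \aleph_2$.

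For the upper bounds I would apply \autoref{LaverPrecov}: $\Mor_{\omega_2}$ preserves covering families of $\Ewf$. Under CH in $V$ the collection of closed null subsets of $2^\omega$ coded in $V$ has cardinality $\aleph_1$ and is already a covering family of $\Ewf$ in $V$; by the preservation --- which, as the proof of the preceding lemma makes explicit, says exactly that every real of $V[G]$ is contained in a closed null set coded in $V$ --- this same family stays covering in $V[G]$. Hence $\cov(\Ewf)^{V[G]} \le \aleph_1$, and the arrow $\add(\Ewf)\to\cov(\Ewf)$ in \autoref{diag:idealI} then gives $\add(\Ewf)^{V[G]} = \aleph_1$.

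For the lower bounds I would invoke the classical fact that the Mathias model satisfies $\sfrak = \aleph_2$: the Mathias generic real $r$ is a pseudointersection of the Ramsey ultrafilter it generates in the one-step extension, so for every $s\in[\omega]^\omega\cap V$ either $r\subseteq^* s$ or $r\subseteq^* \omega\setminus s$, and therefore no ground-model set splits $r$. By properness and the $\aleph_2$-cc of the CS iteration, every candidate splitting family of size $\aleph_1$ in $V[G]$ already lies in some intermediate model $V^{\Mor_\alpha}$ with $\alpha<\omega_2$, and is destroyed by the Mathias real added at stage $\alpha+1$; hence $\sfrak^{V[G]}\ge\aleph_2$. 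Combining this with \autoref{BarJu:sr}(1) and the inequalities $\non(\Ewf)\le\cof(\Ewf)\le\cfrak=\aleph_2$ from \autoref{diag:idealI} yields $\non(\Ewf)=\cof(\Ewf)=\aleph_2$ in $V[G]$.

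The main obstacles are the two ``black box'' inputs: Shelah's CS-iteration preservation theorem for the Laver property, and the computation of $\sfrak$ in the Mathias model. Both are classical (see the relevant chapters of \cite{BJ}), and once granted, the rest of the argument is a short bookkeeping exercise combining \autoref{LaverPrecov}, \autoref{BarJu:sr}(1) and the diagram in \autoref{diag:idealI}.
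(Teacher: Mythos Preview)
Your proposal is correct and follows essentially the same approach as the paper: both use the Laver property of $\Mor_{\omega_2}$ together with \autoref{LaverPrecov} to obtain $\cov(\Ewf)=\aleph_1$, and both use $\sfrak\leq\non(\Ewf)$ from \autoref{BarJu:sr} together with $\sfrak=\aleph_2$ in the Mathias model to obtain $\non(\Ewf)=\aleph_2$. The paper's proof is terser, simply citing \cite[Sec.~7.4A]{BJ} for the well-known values $\sfrak=\bfrak=\dfrak=\cfrak=\aleph_2$ and $\cov(\Nwf)=\cov(\Mwf)=\aleph_1$ rather than sketching the splitting argument, but the logical skeleton is identical.
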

\begin{proof}
It is well-known that $\Mor_{\omega_2}$ forces $\cov(\Nwf)=\cov(\Mwf)=\aleph_1$ and 
$\sfrak=\bfrak=\dfrak=\non(\Ewf)=\aleph_2=\cfrak$. (see e.g.~\cite[Sec. 7.4A]{BJ}). In addition to this, since $\sfrak\leq\non(\Ewf)$ by~\autoref{BarJu:sr}, we have that $\Mor_{\omega_2}$ forces $\non(\Ewf)=\aleph_2$ and $\Mor_{\omega_2}$ forces $\cov(\Ewf)=\aleph_1$ thanks to~\autoref{examp}~\ref{examp:a}~and~\autoref{LaverPrecov}. 
\end{proof}

\begin{lemma}\label{cov=non<cof}
Let $\Mior_{\omega_2}$ be a CS iteration of length $\omega_2$ of Miller forcing. Then, in $V^{\Mior_{\omega_2}}$, $\add(\Ewf)=\cov(\Ewf)=\non(\Ewf)=\aleph_1$, and  $\cof(\Ewf)=\aleph_2$. 
\end{lemma}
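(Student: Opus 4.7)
The plan is to mirror the argument of \autoref{cov<cof}, replacing Mathias forcing by Miller forcing, and to exploit the fact that $\Mior_{\omega_2}$ lives on the ``Miller side'' of Cicho\'n's diagram rather than the ``Mathias side''. First I would invoke the standard catalogue of cardinal characteristics forced by the countable support iteration of Miller forcing (see, e.g., \cite[Sec.~7.3E]{BJ}): in $V^{\Mior_{\omega_2}}$ one has $\bfrak=\cov(\Mwf)=\non(\Mwf)=\cov(\Nwf)=\non(\Nwf)=\aleph_1$ and $\dfrak=\cfrak=\aleph_2$. Since $\add(\Mwf)\leq\bfrak$ and $\dfrak\leq\cof(\Mwf)$, this gives $\add(\Mwf)=\aleph_1$ and $\cof(\Mwf)=\aleph_2$; applying \autoref{ThmZFC:E}~(3) I immediately obtain $\add(\Ewf)=\aleph_1$ and $\cof(\Ewf)=\aleph_2$ in the extension.

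For the uniformity the argument is direct: by \autoref{ThmZFC:E}~(2), $\non(\Ewf)\leq\non(\Mwf)=\aleph_1$, hence $\non(\Ewf)=\aleph_1$. Note that, in contrast to the Mathias case, there is no need to appeal to $\sfrak$ from below; the whole point of the Miller model is that the right-hand side of Cicho\'n's diagram collapses to $\aleph_1$.

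The covering number is the only step where the Laver property intervenes. By \autoref{examp}~\ref{examp:b} Miller forcing has the Laver property, and this property is preserved along countable support iterations of proper forcings (the standard Shelah preservation theorem), so $\Mior_{\omega_2}$ itself has the Laver property. Applying \autoref{LaverPrecov}, $\Mior_{\omega_2}$ preserves covering families of $\Ewf$. Since CH holds in $V$, I can fix a covering family $\{C_\alpha:\alpha<\omega_1\}\subseteq\Ewf\cap V$ with $\bigcup_{\alpha<\omega_1}C_\alpha=2^\omega$; this very family remains a covering family of $\Ewf$ in $V^{\Mior_{\omega_2}}$, and hence $\cov(\Ewf)\leq\aleph_1$ in the extension, so $\cov(\Ewf)=\aleph_1$.

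The main obstacle, or really the only nontrivial black box beyond what appears in the excerpt, is the preservation of the Laver property along the countable support iteration of proper forcings; this is the piece of Shelah's preservation machinery that has to be quoted. Everything else is assembly of the Miller-model values with \autoref{ThmZFC:E} and \autoref{LaverPrecov}.
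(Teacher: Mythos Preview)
Your proof is correct and follows essentially the same route as the paper: quote the standard Miller-model values, read off $\non(\Ewf)=\aleph_1$ from the upper bound $\non(\Ewf)\le\non(\Mwf)$ in \autoref{ThmZFC:E}, use the Laver property together with \autoref{LaverPrecov} for $\cov(\Ewf)=\aleph_1$, and get $\add(\Ewf),\cof(\Ewf)$ from $\add(\Mwf),\cof(\Mwf)$. If anything you are more explicit than the paper on two points it leaves implicit: that the Laver property is preserved along the CS iteration (the paper applies \autoref{LaverPrecov} directly to $\Mior_{\omega_2}$ while \autoref{examp}~\ref{examp:b} is stated only for the single step), and the derivation of $\cof(\Mwf)=\aleph_2$ via $\dfrak\le\cof(\Mwf)$. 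Your listing of $\cov(\Mwf)=\cov(\Nwf)=\aleph_1$ is correct but unused; the paper cites only $\bfrak=\non(\Mwf)=\non(\Nwf)=\aleph_1$ and $\dfrak=\aleph_2$.
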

\begin{proof}
It is well-known that $\Mior_{\omega_2}$ forces $\bfrak=\non(\Mwf)=\non(\Nwf)=\aleph_1$ and $\dfrak=\aleph_2$ (see e.g.~\cite[Sec. 7.3E]{BJ}). Additionally, since $\bfrak=\non(\Mwf)=\aleph_1$, by using~\autoref{ThmZFC:E}, we get that $\non(\Ewf)=\min\{\non(\Mwf),\non(\Nwf)\}$, so $\Mior_{\omega_2}$ forces $\non(\Ewf)=\aleph_1$. On the other hand, by using~\autoref{examp}~\ref{examp:b}~and~\autoref{LaverPrecov}, $\Mior_{\omega_2}$ forces $\cov(\Ewf)=\aleph_1$. Lastly, since $\cof(\Ewf)=\cof(\Mwf)$ by~\autoref{ThmZFC:E}, $\Mior_{\omega_2}$ forces $\cof(\Ewf)=\aleph_2$. 
\end{proof}

In comparison to~\autoref{cov<cof}, the upcoming result is stronger, since it can be forced $\non(\Ewf)>\dfrak$.

\begin{lemma}
Let $\Por$ be a CS iteration of length $\omega_2$ of the tree forcing from~\cite[Lem.~2]{Brppts}. Then, in $V^{\Por}$,  $\dfrak=\cov(\Ewf)=\aleph_1$ and $\non(\Ewf)=\aleph_2$.
\end{lemma}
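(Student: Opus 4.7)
The argument follows the template of~\autoref{cov<cof} and~\autoref{cov=non<cof}: isolate the right combinatorial properties of the single-step forcing $\Qbf$ of~\cite[Lem.~2]{Brppts} and invoke standard CS-iteration preservation theorems. Concretely, I would extract from~\cite{Brppts} (or verify directly) the following:
\begin{enumerate}[label=\rm(\roman*)]
    \item $\Qbf$ is proper and $\omega^\omega$-bounding;
    \item $\Qbf$ has the Laver property;
    \item $\Qbf$ generically adds an $F_\sigma$ measure zero set $\dot F\subseteq\cantor$ that contains every real of the ground model.
\end{enumerate}
By Shelah's preservation theorems (see e.g.~\cite{BJ}), (i) and (ii) lift from $\Qbf$ to the full iteration $\Por$, so $\Por$ is proper, $\omega^\omega$-bounding, and has the Laver property. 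Since $V\models\mathrm{CH}$ and $|\Qbf|=\aleph_1$, both $\omega_1$ and $\omega_2$ are preserved, $\cfrak=\aleph_2$ in $V^\Por$, and a standard $\Delta$-system argument gives that every set of reals of size~$\aleph_1$ in $V^\Por$ already lies in some intermediate model $V^{\Por_\alpha}$ with $\alpha<\omega_2$.

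The two easy halves follow immediately. Being $\omega^\omega$-bounding, $\Por$ forces that $\omega^\omega\cap V$---of size~$\aleph_1$ by CH---is a dominating family, so $\dfrak=\aleph_1$ in $V^\Por$. From the Laver property together with the proof of~\autoref{LaverPrecov}, every real of $V^\Por$ lies in some closed null set coded in~$V$; the collection of such sets has size~$\aleph_1$ by CH, witnessing $\cov(\Ewf)\leq\aleph_1$, while the reverse inequality is trivial.

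The decisive inequality is $\non(\Ewf)\geq\aleph_2$, proved by reflection. Fix $A\in V^\Por$ with $|A|=\aleph_1$, and pick $\alpha<\omega_2$ with $A\subseteq\R\cap V^{\Por_\alpha}$. Reading $V^{\Por_\alpha}$ as the ground model for the next iterand $\dot\Qbf_\alpha\cong\Qbf$, property~(iii) produces an $F_\sigma$ measure zero set $F\in V^{\Por_{\alpha+1}}$ containing $\R\cap V^{\Por_\alpha}$, hence containing $A$. Since ``$F_\sigma$'' and ``Lebesgue measure zero closed set'' are $\boldsymbol{\Pi}^0_2$ statements about a Borel code, they are absolute between transitive models; consequently the interpretation of the code of~$F$ in~$V^\Por$ is still an $F_\sigma$ measure zero set and still contains the same reals of~$A$. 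Thus $A\in\Ewf^{V^\Por}$, and combined with the trivial $\non(\Ewf)\leq\cfrak=\aleph_2$ this gives $\non(\Ewf)=\aleph_2$. The main obstacle is property~(iii): one must verify that the tree forcing of~\cite[Lem.~2]{Brppts} really produces such an $\dot F$ while retaining the Laver and $\omega^\omega$-bounding properties of Miller-like trees---once this is in hand, the iteration step is the standard pattern already used in Lemmas~\ref{cov<cof} and~\ref{cov=non<cof}.
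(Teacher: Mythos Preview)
Your overall architecture—properness, $\omega^\omega$-bounding, a reflection argument for $\non(\Ewf)=\aleph_2$ via the generically added $F_\sigma$ null cover of the ground-model reals—matches the paper. The point of divergence is your item~(ii): you invoke the Laver property of the iterand and then appeal to \autoref{LaverPrecov} to get $\cov(\Ewf)=\aleph_1$. The paper does \emph{not} claim the Laver property for this forcing; what the references (Brendle and the Shelah class in~\cite{Sh:326}, \cite[Sec.~7.3B]{BJ}) establish is only that the forcing is $\omega^\omega$-bounding and does not add random reals. From those two facts the paper obtains $\dfrak=\aleph_1$ and $\cov(\Nwf)=\aleph_1$, and then concludes $\cov(\Ewf)=\aleph_1$ directly from the ZFC inequality $\cov(\Ewf)\leq\max\{\dfrak,\cov(\Nwf)\}$ of \autoref{ThmZFC:E}(1).

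So the paper's route is both shorter and safer: it uses exactly the properties that are in the literature, whereas your route saddles you with verifying the Laver property for a specific Shelah-type norm forcing—an extra and nontrivial claim that is not asserted in the cited sources and may well fail (for norm-based tree forcings the Laver or Sacks property depends delicately on the norms chosen). If you can indeed prove~(ii), your argument goes through; but since $\cov(\Ewf)\leq\max\{\dfrak,\cov(\Nwf)\}$ already gives the conclusion once you have $\omega^\omega$-bounding and ``no randoms'', there is no need to take that detour.
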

\begin{proof}
We just are going to give only a brief outline of the proof. Details can be found in the references. The forcing from~\cite[Lem.~2]{Brppts} we iterate belongs to a class of forcing notions introduced by Shelah~\cite{Sh:326} (see also~\cite[Sec.~7.3B]{BJ}). This forcing is $\baire$-bounding and does not add random reals. Furthermore, Brendle proved that this forcing increases $\non(\Ewf)$ (see~\cite[Lem.~2]{Brppts}). We therefore have that, in $V^\Por$, $\cov(\Nwf)=\dfrak=\aleph_1$ and $\non(\Ewf)=\aleph_2$. Since $\cov(\Mwf)=\dfrak=\aleph_1$, by applying~\autoref{ThmZFC:E} $\cov(\Ewf)=\aleph_1$.
\end{proof}

\section{The consistency of  \texorpdfstring{$\non(\Ewf)<\cov(\Ewf)$}{}}

Before delving into the specifics, we provide all of the necessary to prove the consistency of $\non(\Ewf)<\cov(\Ewf)$. This section is based on~\cite[Sec.~7]{CMlocalc} and~\cite[Sec.~5]{Car23}.

\begin{definition}\label{Setslalom}
 For an increasing function $b\in\omega^\omega$, define the following sets of slaloms:
 \begin{enumerate}[label=\normalfont(\arabic*)]

     \item $\Swf_{b}:=\set{\varphi\in\prod_{n<\omega}\Pwf(b(n))}{\forall\, n\in\omega\colon \frac{|\varphi(n)|}{2^{b(n+1)-b(n)}} \leq \frac{1}{2^n}}$.

     \item $\Swfw_{b}:=\set{\varphi\in\prod_{n<\omega}\Pwf(b(n))}{\exists^\infty\, n\in\omega\colon \frac{|\varphi(n)|}{2^{b(n+1)-b(n)}} \leq \frac{1}{2^n}}$.
 \end{enumerate}
\end{definition}

Note that $\Swf_b\subseteq\Swfw_b$ and that
\[\begin{split}
    \varphi\in\Swfw_b & \text{ implies }\liminf_{n\to\infty}\frac{|\varphi(n)|}{|b(n)|}<1,\\
    \varphi\in\Swf_b & \text{ implies }\limsup_{n\to\infty}\frac{|\varphi(n)|}{|b(n)|}<1.
\end{split}\]
Also observe that, if $b$ is a function with the domain $\omega$ such that $b(i)\neq\vacio$ for all $i<\omega$, and  $h\in\omega^\omega$, then 

\begin{align*}\label{remarkE}
\liminf_{n\to\infty}\frac{h(n)}{|b(n)|}<1 & \text{\ iff\ }\forall\, m<\omega\colon \prod_{n\geq m}\frac{h(n)}{|b(n)|}=0,\tag{$\varotimes$}\\
 \limsup_{n\to\infty}\frac{h(n)}{|b(n)|}<1 & \text{\ iff\ }\forall\, A\in[\omega]^{\aleph_0}\colon \prod_{n\in A}\frac{h(n)}{|b(n)|}=0.\notag 
\end{align*}

We can construct sets in $\Ewf$ using slaloms in the following way.

\begin{lemma}\label{toolE}
Let $\tilb\in\omega^\omega$ be increasing and $b(n):=2^{\tilb(n+1)-\tilb(n)}$ for all $n\in\omega$. If $\varphi\in\prod_{n<\omega}\pts(b(n))$ and $\liminf_{n\to\infty}\frac{|\varphi(n)|}{|b(n)|}<1$, then the set
\[H_{\tilb,\varphi}:=\set{x\in2^\omega}{\forall^\infty\, n\in\omega\colon x{\upharpoonright}[\tilb(n), \tilb(n+1))\in\varphi(n)}\]
belongs to $\Ewf$.
\end{lemma}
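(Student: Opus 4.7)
The plan is to show that $H_{\tilb,\varphi}$ is itself $F_\sigma$ and of Lebesgue measure zero, whence it belongs to $\Ewf$. I would start by writing
\[H_{\tilb,\varphi}=\bigcup_{m<\omega}H_m,\text{ where }H_m:=\set{x\in2^\omega}{\forall\, n\geq m\colon x{\restriction}[\tilb(n),\tilb(n+1))\in\varphi(n)},\]
so it suffices to prove that every $H_m$ is a closed null set.

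For closedness, note that $H_m=\bigcap_{n\geq m}C_n$ where
\[C_n:=\set{x\in2^\omega}{x{\restriction}[\tilb(n),\tilb(n+1))\in\varphi(n)}=\bigcup_{s\in\varphi(n)}[s]_{[\tilb(n),\tilb(n+1))},\]
is clopen (a finite union of cylinders of the form $\{x:x{\restriction}[\tilb(n),\tilb(n+1))=s\}$). Thus $H_m$ is closed.

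For the measure estimate, the intervals $[\tilb(n),\tilb(n+1))$ are pairwise disjoint, so the events ``$x{\restriction}[\tilb(n),\tilb(n+1))\in\varphi(n)$'' are independent with respect to the product (Lebesgue) measure on $\cantor$, and
\[\Lb(C_n)=\frac{|\varphi(n)|}{2^{\tilb(n+1)-\tilb(n)}}=\frac{|\varphi(n)|}{|b(n)|}.\]
Consequently
\[\Lb(H_m)=\prod_{n\geq m}\frac{|\varphi(n)|}{|b(n)|}.\]
By hypothesis $\liminf_{n\to\infty}\frac{|\varphi(n)|}{|b(n)|}<1$, so the equivalence~\eqref{remarkE} (applied to $h(n):=|\varphi(n)|$) yields $\prod_{n\geq m}\frac{|\varphi(n)|}{|b(n)|}=0$ for every $m<\omega$. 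Hence every $H_m$ is a closed null set, which makes $H_{\tilb,\varphi}$ a countable union of $F_\sigma$ measure zero sets, i.e.\ an element of $\Ewf$.

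The proof is essentially bookkeeping; the only conceptual point is to recognize that the hypothesis is tailored precisely to feed into~\eqref{remarkE}, guaranteeing that the infinite product of the measures of the independent clopen events vanishes. I do not foresee any real obstacle beyond verifying that the chosen blocks $[\tilb(n),\tilb(n+1))$ make the cylinders independent, which is immediate from their disjointness.
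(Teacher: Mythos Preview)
Your proof is correct and follows essentially the same approach as the paper: decompose $H_{\tilb,\varphi}$ as the increasing union of the closed sets $H_m$ (the paper calls them $B^m_{\tilb,\varphi}$), compute $\Lb(H_m)=\prod_{n\ge m}\frac{|\varphi(n)|}{|b(n)|}$, and invoke~\eqref{remarkE} to conclude this product vanishes. The only difference is that you spell out the closedness of $H_m$ explicitly as an intersection of clopen cylinders, which the paper leaves implicit.
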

\begin{proof}
Notice that $H_{\tilb,\varphi}$ is a countable union of the closed null sets  
\[B_{\tilb, \varphi}^m:=\set{x\in2^\omega}{\forall\, n\geq m\colon x{\upharpoonright}[\tilb(n), \tilb(n+1))\in\varphi(n)}\text{ for }m\in\omega.\] 
Indeed,  
\begin{align*}
\Lb(B_{\tilb, \varphi}^m)=&\prod_{n\geq m}\Lb\left(\set{x\in2^\omega}{x{\upharpoonright}[\tilb(n), \tilb(n+1))\in\varphi(n)}\right)\\
=&\prod_{n\geq m}\frac{|\varphi(n)|}{2^{\tilb(n+1)-\tilb(n)}} =0,
\end{align*}
where the latter equality holds by~\eqref{remarkE}. Hence $\Lb(B_{\tilb, \varphi}^m)=0$ for all $m<\omega$, so   
\[H_{\tilb, \varphi}=\bigcup_{m<\omega} B_{\tilb, \varphi}^m\] is an $F_\sigma$ null set and thus belongs to $\Ewf$.
\end{proof}

\begin{corollary}\label{cor:toolE}
    Let $b\in\omega^\omega$ be increasing. If $\varphi\in \Swfw_b$ then $H_{b,\varphi}\in\Ewf$.
\end{corollary}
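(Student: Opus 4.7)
My plan is to deduce the corollary directly from \autoref{toolE} by taking the $\tilb$ appearing there to be our $b$. Under this substitution, the auxiliary function denoted $b$ in \autoref{toolE} becomes $n\mapsto 2^{b(n+1)-b(n)}$, which is precisely the denominator appearing in the defining inequality of $\Swfw_b$ in \autoref{Setslalom}. The set $H_{b,\varphi}$ in the statement of the corollary then coincides with $H_{\tilb,\varphi}$ from \autoref{toolE} (once one makes the standard identification of $2^{b(n+1)-b(n)}$ with $2^{[b(n),b(n+1))}$, so that elements of $\varphi(n)$ can legitimately be compared with the restrictions $x\restriction[b(n),b(n+1))$).

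The only thing to verify is the liminf hypothesis of \autoref{toolE}, namely $\liminf_{n\to\infty}\frac{|\varphi(n)|}{2^{b(n+1)-b(n)}}<1$. By definition of $\Swfw_b$, there exist infinitely many $n$ with $\frac{|\varphi(n)|}{2^{b(n+1)-b(n)}}\leq\frac{1}{2^n}$, and the right-hand side tends to $0$. Restricting to this infinite subsequence shows the liminf equals $0$, which is certainly strictly less than $1$. Hence \autoref{toolE} applies and yields $H_{b,\varphi}\in\Ewf$, as desired. I expect no genuine obstacle here; the corollary is essentially a repackaging of \autoref{toolE} into the slalom framework of \autoref{Setslalom}, with the passage from the $\exists^\infty$ clause in the definition of $\Swfw_b$ to the liminf condition being the only, and entirely routine, step.
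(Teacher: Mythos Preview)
Your proposal is correct and is precisely the intended argument: the paper states the corollary without proof because it follows immediately from \autoref{toolE} once one observes (as already remarked right after \autoref{Setslalom}) that $\varphi\in\Swfw_b$ implies the required $\liminf$ condition. Your verification that $\exists^\infty n\colon \frac{|\varphi(n)|}{2^{b(n+1)-b(n)}}\le 2^{-n}$ forces the $\liminf$ to be $0$ is exactly this observation spelled out.
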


Thanks to the foregoing lemma, we obtain a basis of $\Ewf$.

\begin{lemma}[{\cite[Thm. 4.3]{BS1992}}]\label{lem:combE}
Suppose that $C\in\Ewf$. Then there is some increasing $\tilb \in\omega^\omega$ and some $\varphi\in\Swf_{\tilb}$ such that $C\subseteq H_{\tilb, \varphi}$.
\end{lemma}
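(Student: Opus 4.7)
The plan is to reduce the claim to the case where $C$ is itself an $F_\sigma$ null set, and then to build $\tilb$ and $\varphi$ by recursion using outer regularity of Lebesgue measure. Since $\Ewf$ is the ideal generated by the $F_\sigma$ null sets, the given $C$ is contained in a finite union of such sets, which is again $F_\sigma$ null. So I may assume $C=\bigcup_{k<\omega} C_k$ with $\la C_k :\, k<\omega\ra$ an increasing sequence of closed null subsets of $2^\omega$.

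The construction is recursive on $n$, producing $\tilb(n)$ together with $\varphi(n)$. Set $\tilb(0):=0$. Given $\tilb(n)$, use outer regularity of $\Lb$ to choose a clopen $U_n\supseteq C_n$ with $\Lb(U_n)<2^{-n-\tilb(n)}$. Because $U_n$ is clopen and compact, it is a finite union of basic cylinders, which by further splitting can be made to have a common length $m_n>\tilb(n)$; write $U_n=\bigcup_{s\in S_n}[s]$ with $S_n\subseteq 2^{m_n}$. Then define
\[\tilb(n+1):=m_n,\qquad \varphi(n):=\{s\frestr [\tilb(n),\tilb(n+1)) :\, s\in S_n\}\subseteq 2^{\tilb(n+1)-\tilb(n)}.\]

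For the verification, since $|S_n|=2^{\tilb(n+1)}\,\Lb(U_n)<2^{\tilb(n+1)-\tilb(n)-n}$ and $|\varphi(n)|\leq|S_n|$, one gets $|\varphi(n)|/2^{\tilb(n+1)-\tilb(n)}\leq 1/2^n$, so $\varphi\in\Swf_{\tilb}$. For the inclusion $C\subseteq H_{\tilb,\varphi}$, fix $x\in C$ and pick $k$ with $x\in C_k$; then for every $n\geq k$ one has $x\in C_k\subseteq C_n\subseteq U_n$, which forces $x\frestr\tilb(n+1)\in S_n$, hence $x\frestr[\tilb(n),\tilb(n+1))\in\varphi(n)$. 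Therefore $x\in H_{\tilb,\varphi}$.

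The only delicate point is the calibration of the outer approximation: one must absorb the loss of the factor $2^{\tilb(n)}$ that appears when passing from $|S_n|/2^{\tilb(n+1)}$ (which equals $\Lb(U_n)$) to $|\varphi(n)|/2^{\tilb(n+1)-\tilb(n)}$ (the ratio controlled in the definition of $\Swf_{\tilb}$). Demanding $\Lb(U_n)<2^{-n-\tilb(n)}$ at stage $n$ does this precisely, and the rest is bookkeeping.
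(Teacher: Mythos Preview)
Your proof is correct and follows essentially the same approach as the paper: both reduce to an increasing union of closed null sets $C_n$, recursively approximate each $C_n$ by a clopen set (equivalently, a finite union of cylinders of some common length $m_n>\tilb(n)$) of suitably small measure, and use the restrictions of those cylinders to define $\varphi(n)$. The only cosmetic difference is the calibration constant---the paper demands measure below $4^{-\tilb(n)}$ while you demand $2^{-n-\tilb(n)}$---but both yield $|\varphi(n)|/2^{\tilb(n+1)-\tilb(n)}\leq 2^{-n}$ in the same way.
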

\begin{proof}
Let us assume wlog that $C\subseteq2^\omega$ is a null set of type $F_\sigma$. Then $C$ can be written as $\bigcup_{n\in\omega}C_n$ where $\la C_n:\, n\in\omega\ra$ is an increasing family of closed sets of measure zero. 

Note that each $C_n$ is a compact set. It is easy to see that, if $K\subseteq2^{\omega}$ is a compact null set, then 
\[\forall\, \varepsilon>0\ \forall^{\infty}\, n\ \exists\, T\subseteq2^{n}\colon K\subseteq[T] \text{ and }\frac{|T|}{2^{n}}<\varepsilon\]
where $[T]:=\bigcup_{t\in T}[t]$.
Hence, we can define an increasing $\tilb\in\omega^\omega$ by $\tilb(0):=0$ and 
\[\tilb(n+1):=\min\set{m>\tilb(n)}{\exists\, T_n\subseteq 2^{m}\colon C_n\subseteq[T_n]\text{ and }\frac{|T_n|}{2^m} < \frac{1}{4^{\tilb(n)}}}\textrm{\ for $n>1$}.\]
Next, choose $T_n\subseteq2^{\tilb(n+1)}$ such that $C_n\subseteq[T_n]$  and $\frac{|T_n|}{2^{\tilb(n+1)}} < \frac{1}{4^{\tilb(n)}}$. Now define, for $n\in\omega$, \[\varphi(n):=\set{s{\upharpoonright}[\tilb(n), \tilb(n+1))}{s\in T_n}.\]
It is clear that $|\varphi(n)|\leq |T_n|$ for every $n<\omega$, hence
\begin{equation*}\label{eqE}
  \frac{|\varphi(n)|}{2^{\tilb(n+1)-\tilb(n)}}<\frac{1}{2^{\tilb(n)}}\leq\frac{1}{2^n}. 
\end{equation*}
Thus, $\varphi\in\Swf_{\tilb}$ and, by~\autoref{cor:toolE}, $H_{\tilb, \varphi}\in\Ewf$. We also have $C\subseteq H_{\tilb, \varphi}$. 
\end{proof}

The following is a technical lemma that connects the structure of $\omega^\omega$ with $\Ewf$. Before stating it, for each increasing $f\in\omega^\omega$ define the increasing function $f^*\colon \omega\to\omega$ such that $f^*(0)=0$ and $f^*(n+1)=f(f^*(n)+1)$ for $n>0$.

\begin{lemma}[{\cite[Lem.~7.5]{CMlocalc}}]\label{lem:mon}
Let $b_0,b_1\in\omega^\omega$ be increasing functions, let $b_1^*\in\omega^\omega$ be as above, and let $\varphi\in\Swf_{b_0}$.
\begin{enumerate}[label=\normalfont(\arabic*)]
    \item\label{lem:monb} If $b_1\not\leq^*b_0$ then there is some  $\varphi^*\in\Swfw_{b_1^*}$ such that $H_{b_0, \varphi}\subseteq H_{b_1^*, \varphi^*}$.
    
    \item\label{lem:mona} If $b_0\leq^*b_1$ then there is some $\varphi_*\in\Swf_{b_1^*}$ such that $H_{b_0, \varphi}\subseteq H_{b_1^*, \varphi_*}$.
\end{enumerate}
\end{lemma}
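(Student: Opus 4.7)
The plan is to build both slaloms by the same combinatorial template: for each $m$, collect the $b_0$-blocks that fit entirely inside the $m$th $b_1^*$-block and impose the corresponding $\varphi$-constraints there, while leaving the leftover positions unconstrained. Concretely, for each $m$ set $N_m := \{n \in \omega : b_1^*(m) \le b_0(n) < b_0(n+1) \le b_1^*(m+1)\}$ and $L_m := [b_1^*(m), b_1^*(m+1)) \setminus \bigcup_{n \in N_m}[b_0(n), b_0(n+1))$, and define
\[\psi(m) := \bigl\{ s \in 2^{[b_1^*(m), b_1^*(m+1))} : s\upharpoonright[b_0(n), b_0(n+1)) \in \varphi(n) \text{ for every } n \in N_m \bigr\}.\]

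The first step is to verify the inclusion $H_{b_0, \varphi} \subseteq H_{b_1^*, \psi}$, which is immediate from the definitions: if $x$ eventually satisfies the $\varphi$-constraint on every $b_0$-block, then for every $m$ large enough that $\min N_m$ surpasses the threshold, $x \upharpoonright [b_1^*(m), b_1^*(m+1)) \in \psi(m)$. The second step is the size estimate. Using $|\varphi(n)| \le 2^{b_0(n+1)-b_0(n)-n}$ (from $\varphi \in \Swf_{b_0}$) together with the partition identity $|L_m| + \sum_{n \in N_m}(b_0(n+1)-b_0(n)) = b_1^*(m+1) - b_1^*(m)$, the exponents telescope to
\[\frac{|\psi(m)|}{2^{b_1^*(m+1) - b_1^*(m)}} \le 2^{-\sum_{n \in N_m} n},\]
so the slalom condition $\le 2^{-m}$ reduces to exhibiting some $n \in N_m$ with $n \ge m$.

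For part~(2), the candidate $n = b_1^*(m)$ works for every sufficiently large $m$: strict monotonicity of $b_0$ gives $b_0(b_1^*(m)) \ge b_1^*(m)$, the hypothesis $b_0 \le^* b_1$ gives $b_0(b_1^*(m)+1) \le b_1(b_1^*(m)+1) = b_1^*(m+1)$ eventually, and $b_1^*(m) \ge m$. Setting $\varphi_* := \psi$ then finishes~(2). For part~(1), the hypothesis $b_1 \not\le^* b_0$ only produces the inequality $b_0(k) \le b_1(k)$ at infinitely many $k$, so I would define $\varphi^*(m) := \psi(m)$ for $m$ in an infinite \textit{good} set $M$ on which the size bound holds, and $\varphi^*(m) := 2^{[b_1^*(m), b_1^*(m+1))}$ for $m \notin M$. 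The latter is the full product, hence trivially preserves the inclusion; and since the $\Swfw$-condition only requires the size bound to hold infinitely often, only the infinitude of $M$ matters. The main obstacle here is the production of~$M$: one must extract from the infinite set $\{k : b_1(k) > b_0(k)\}$ infinitely many $m$'s for which both $b_0(b_1^*(m)) \ge b_1^*(m)$ and $b_0(b_1^*(m)+1) \le b_1^*(m+1)$ hold simultaneously, so that $n = b_1^*(m)$ genuinely witnesses $N_m$.
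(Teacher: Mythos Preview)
The paper does not supply its own proof of this lemma; it merely records the statement and cites \cite[Lem.~7.5]{CMlocalc}. So there is no in-paper argument to compare against, and I evaluate your proposal on its own merits.

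Your treatment of part~(2) is correct. The slalom $\psi$ contains $H_{b_0,\varphi}$, the size computation
\[
\frac{|\psi(m)|}{2^{\,b_1^*(m+1)-b_1^*(m)}}\;\le\;2^{-\sum_{n\in N_m} n}
\]
is right, and the witness $n=b_1^*(m)\in N_m$ for all large $m$ gives $\sum_{n\in N_m} n\ge b_1^*(m)\ge m$, so $\psi\in\Swf_{b_1^*}$.

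Part~(1), however, has a genuine gap---one you flag but do not close---and your proposed witness is actually wrong. Requiring $n=b_1^*(m)\in N_m$ amounts to $b_0(j)\le b_1(j)$ at the specific points $j=b_1^*(m)+1$; but the hypothesis $b_1\not\le^* b_0$ only says $b_0(k)<b_1(k)$ for infinitely many $k\in\omega$, and these $k$ may completely avoid the sparse set $\{b_1^*(m)+1:m<\omega\}$. Concretely, take $b_1(k)=k^2$, so that $b_1^*(m)+1$ runs through $1,2,5,26,677,\dots$; one can build a strictly increasing $b_0$ with $b_0(j)>j^2$ at exactly these $j$ while $b_0(k)<k^2$ at all intermediate $k$. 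Then $b_1\not\le^* b_0$ holds, yet $b_1^*(m)\notin N_m$ for every $m$.

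The construction $\psi$ can still be salvaged, but via a different argument: show the contrapositive that if $\sum_{n\in N_m} n<m$ for all $m\ge M$, then $b_1\le^* b_0$. The key is that, under this assumption, for each $m\ge M$ there is $j_m\le b_1^*(m)+1$ with $b_0(j_m)>b_1^*(m+1)\ge b_1(j_m)$ (namely $j_m$ is the least $j$ with $b_0(j)>b_1^*(m+1)$), and one then checks that the $j_m$ eventually hit every integer. This is the missing piece, and it requires an argument you have not given.
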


As a result of the previous, we infer:

\begin{lemma}[{\cite[Lem.~5.1]{BS1992}}]\label{Thm:tecBS}
Suppose that $\Por$ is a ccc forcing notion. Let $\dot C$ be a $\Por$-name for a member of $\Ewf$.
 \begin{enumerate}[label=\normalfont(\arabic*)]
     \item If $\Por$ does not add dominating reals, then there exists $b\in\omega^\omega\cap V$ and a $\Por$-name $\dot\varphi$ such that $\dot\varphi\in\Swfw_b$ and $\Vdash``\dot C\subseteq H_{b, \dot \varphi}"$.

     \item If $\Por$ is $\omega^\omega$-bounding, then there exists $b\in\omega^\omega\cap V$ and a $\Por$-name $\dot\varphi$ such that $\dot\varphi\in\Swf_b$ and $\Vdash``\dot C\subseteq H_{b, \dot \varphi}"$.
 \end{enumerate}
\end{lemma}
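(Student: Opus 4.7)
The plan is to work inside the $\Por$-extension to obtain a concrete $F_\sigma$-null bound of the form provided by~\autoref{lem:combE}, and then to pull its combinatorial parameters down to the ground model using, respectively, the $\baire$-bounding assumption in case (2) and the absence of dominating reals in case (1). The bridging from the (ground model) $b$ to the (extension) $\dot{\tilde b}$ is exactly what~\autoref{lem:mon} was designed for. Concretely, applying~\autoref{lem:combE} in $V^\Por$ to $\dot C$, we obtain $\Por$-names $\dot{\tilde b}$ for an increasing function in $\baire$ and $\dot\varphi_0$ with $\dot\varphi_0\in\Swf_{\dot{\tilde b}}$ and $\Vdash\dot C\subseteq H_{\dot{\tilde b},\dot\varphi_0}$. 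It remains to replace $\dot{\tilde b}$ by a function in~$V$.

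In case (2), since $\Por$ is $\baire$-bounding, we can fix an increasing $b_1\in\baire\cap V$ with $\Vdash \dot{\tilde b}\leq^* b_1$. Then, applying~\autoref{lem:mon}\ref{lem:mona} inside the extension with $b_0:=\dot{\tilde b}$ and this $b_1$, we obtain a name $\dot\varphi_*\in\Swf_{b_1^*}$ with $\Vdash H_{\dot{\tilde b},\dot\varphi_0}\subseteq H_{b_1^*,\dot\varphi_*}$. Setting $b:=b_1^*$ (a member of $V$) and $\dot\varphi:=\dot\varphi_*$ finishes this case.

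In case (1), the main obstacle is producing a single increasing $b_1\in\baire\cap V$ such that $\Vdash b_1\not\leq^* \dot{\tilde b}$, which is exactly the hypothesis required by~\autoref{lem:mon}\ref{lem:monb}. The assumption that $\Por$ adds no dominating real guarantees that, for each $p\in\Por$, there are $q\leq p$ and $f_q\in\baire\cap V$ with $q\Vdash \check f_q\not\leq^* \dot{\tilde b}$. Let $A=\{q_n:n<\omega\}$ be a maximal antichain of such conditions, which is countable by the ccc, with associated witnesses $f_n\in V$. Define $b_1\in V$ by $b_1(k):=\max\{f_n(k)+1:n\leq k\}$, which is well defined, is in $\baire$, and can be assumed increasing (replacing $b_1$ by $k\mapsto \max_{j\leq k}b_1(j)+k$ if necessary). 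For each fixed $n$, we have $b_1(k)\geq f_n(k)$ for all $k\geq n$, so $q_n\Vdash \exists^\infty k\colon b_1(k)>\dot{\tilde b}(k)$, i.e.\ $q_n\Vdash b_1\not\leq^* \dot{\tilde b}$. By maximality of $A$ this yields $\Vdash b_1\not\leq^* \dot{\tilde b}$. Now~\autoref{lem:mon}\ref{lem:monb} applied in the extension gives a name $\dot\varphi^*\in\Swfw_{b_1^*}$ with $\Vdash H_{\dot{\tilde b},\dot\varphi_0}\subseteq H_{b_1^*,\dot\varphi^*}$, and $b:=b_1^*\in V$, $\dot\varphi:=\dot\varphi^*$ are as required.

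The only nontrivial step is the ccc extraction used to obtain a single $b_1\in V$ forced not to be eventually dominated by $\dot{\tilde b}$; everything else is a direct combination of~\autoref{lem:combE} (to reduce $\dot C$ to some $H_{\dot{\tilde b},\dot\varphi_0}$ in the extension) and~\autoref{lem:mon} (to trade a name-base $\dot{\tilde b}$ for a ground-model base $b_1^*$ at the mild cost of possibly passing from $\Swf$ to $\Swfw$ in case (1)).
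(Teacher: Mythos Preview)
Your proof is correct and follows exactly the route the paper intends: the paper states this lemma without proof, prefacing it with ``As a result of the previous, we infer,'' meaning it is to be obtained by combining \autoref{lem:combE} (applied in the extension) with \autoref{lem:mon} (to replace the name $\dot{\tilde b}$ by a ground-model function). Your write-up fills in precisely these details, including the standard ccc diagonalization needed in case~(1) to produce a single $b_1\in V$ that is forced not to be eventually dominated by $\dot{\tilde b}$.
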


The previous lemma is essential for proving the consistency of $\non(\Ewf)<\cov(\Ewf)$.

\begin{lemma}[{\cite[Thm.~ 5.3]{BS1992}, see also~\cite[Thm.~5.2]{Car23}}]\label{thm:pnon(E)}
Assume that~$\Bor$ is a complete Boolean algebra with a strictly positive $\sigma$-additive measure $\mu$ and let $\dot C$ be a $\Por$-name of a closed measure zero subset of $2^\omega$. Then there is a closed measure zero subset $C^*$ of $2^\omega$ (in the ground model) such that\ 
\[\forall\, x\in 2^\omega\smallsetminus C^*\colon \Vdash x\notin \dot C.\]
\end{lemma}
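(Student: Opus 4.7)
The plan is to reduce the problem to a combinatorial estimate on the Boolean measure profile of $\dot C$ and then package the resulting ground-model data into a tree. Let $\dot T$ be the canonical $\Bor$-name for the tree $\{s\in 2^{<\omega}:[s]\cap \dot C\neq\vacio\}$ of $\dot C$, and for each $s\in 2^{<\omega}$ define $\tau(s):=\mu(\llbracket s\in\dot T\rrbracket)$. Since $[t]\subseteq[s]$ whenever $s\sqsubseteq t$, the function $\tau$ is monotone decreasing along branches, so every sublevel set $\{\tau\geq c\}$ is automatically a subtree of $2^{<\omega}$.

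The combinatorial heart of the argument is a bounded-convergence step. Because $\dot C$ is forced to be null and the sequence $|\dot T\cap 2^n|/2^n$ decreases pointwise to $\Lb(\dot C)=0$, $\sigma$-additivity of $\mu$ yields the ground-model identity
\[
\frac{1}{2^n}\sum_{s\in 2^n}\tau(s)\;=\;\int\frac{|\dot T\cap 2^n|}{2^n}\,d\mu\;\longrightarrow\;0\quad(n\to\infty).
\]

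With this in hand, for each $k\in\omega$ I set $T_k:=\{s\in 2^{<\omega}:\tau(s)\geq 2^{-k}\}$, which is a subtree by monotonicity of $\tau$. Markov's inequality then gives $|T_k\cap 2^n|/2^n\leq 2^k\cdot\int(|\dot T\cap 2^n|/2^n)\,d\mu\to 0$, so each $[T_k]$ is closed and Lebesgue null in $V$. I would then set $C^*:=\bigcup_{k\in\omega}[T_k]$, an $F_\sigma$ null set (hence a member of $\Ewf$). The covering property is then immediate: if $x\notin C^*$, then $x\notin[T_k]$ for every $k$, so $\inf_n\tau(x\frestr n)=0$; by $\sigma$-additivity $\mu(\llbracket x\in\dot C\rrbracket)=\inf_n\tau(x\frestr n)=0$, and strict positivity of $\mu$ forces $\llbracket x\in\dot C\rrbracket=\mathbf{0}$, i.e.\ $\Vdash x\notin\dot C$.

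The step I expect to be the main obstacle is upgrading $C^*$ from an $F_\sigma$ null set to a single \emph{closed} null set, as the statement literally demands. A tree of the form $\{\tau\geq\rho(|s|)\}$ with $\rho$ nonincreasing cannot simultaneously be a subtree and capture all of $E:=\{x:\mu(\llbracket x\in\dot C\rrbracket)>0\}$, so a more delicate telescoping is required: choose a rapidly sparse sequence $n_k$ with $\int(|\dot T\cap 2^{n_k}|/2^{n_k})\,d\mu<4^{-k}$, apply Markov at each level to extract a ground-model $L_k\subseteq 2^{n_k}$ of relative density $<2^{-k}$, and then amalgamate the $L_k$ into a single tree $T^*$ whose level $n_k$ is forced to lie inside $L_k$ (interpolating downward by taking initial segments). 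The Borel--Cantelli bound $\sum_k|L_k|/2^{n_k}<\infty$ gives $\Lb([T^*])=0$, and the verification then runs exactly as above; this telescoping, where the $\sigma$-additivity and strict positivity of $\mu$ are used essentially, is where the technical heart of the upgrade sits.
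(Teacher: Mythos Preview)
Your core argument---defining $\tau(s)=\mu(\llbracket s\in\dot T\rrbracket)$, using monotone convergence to get $2^{-n}\sum_{s\in 2^n}\tau(s)\to 0$, applying Markov to show each $[T_k]=\{x:\forall n\ \tau(x{\upharpoonright}n)\geq 2^{-k}\}$ is closed null, and setting $C^*=\bigcup_k[T_k]$---is correct and takes a genuinely different route from the paper. The paper first invokes \autoref{Thm:tecBS} (using that $\Bor$ is $\omega^\omega$-bounding) to replace $\dot C$ by a slalom set $H_{b,\dot\varphi}$ with $b$ in the ground model, and then appeals to Kelley's intersection-number inequality to bound $|\psi(n)|$ where $\psi(n)=\{s:\mu(\llbracket s\in\dot\varphi(n)\rrbracket)\geq 2^{-\lfloor n/2\rfloor}\}$. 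Your Fubini--Markov argument is more elementary and self-contained, using only the $\sigma$-additivity and strict positivity of $\mu$ directly; the paper's route has the advantage of producing $C^*$ already in the canonical slalom form $H_{b,\psi}$ used throughout the section.

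Your worry about upgrading $C^*$ from $F_\sigma$ null to literally closed is unnecessary: the paper's own proof also produces only an $F_\sigma$ null set (of the form $\{x:\forall^\infty n\ x{\upharpoonright}[b(n),b(n+1))\in\psi(n)\}$), so ``closed'' in the statement should be read as ``in $\Ewf$'', which is all that the application in \autoref{non<cov} requires. Your proposed telescoping fix has a gap in any case: if $T^*$ is constrained to have $T^*\cap 2^{n_k}\subseteq L_k=\{s\in 2^{n_k}:\tau(s)\geq 2^{-k}\}$ for every $k$, then $[T^*]\subseteq\bigcap_k\{x:x{\upharpoonright}n_k\in L_k\}$, but a point $x\in E$ with $\mu(\llbracket x\in\dot C\rrbracket)=c>0$ satisfies $x{\upharpoonright}n_k\in L_k$ only once $2^{-k}\leq c$, so the early levels exclude it and $E\not\subseteq[T^*]$.
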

\begin{proof}
By employing~\autoref{Thm:tecBS}, choose $b\in\omega^\omega$ in $V$ and a $\Bor$-name $\dot\varphi$ of a member of $\Swf_b$ such that $\Vdash_\Bor\dot C\subseteq H_{b,\dot \varphi}$. 
For $s\in 2^{[b(n), b(n+1))}$ set $B_{n,s}:=\llbracket s\in\dot\varphi(n)\rrbracket\in\Bor$. 

Now, for $n<\omega$, define $\psi(n)$ by 
\[\psi(n):=\bigg\{s\in 2^{[b(n), b(n+1))}:\mu(B_{n,s})\geq2^{-\lfloor\frac{n}{2}\rfloor}\bigg\}.\]
We claim that 
\[\frac{|\psi(n)|}{2^{b(n+1)-b(n)}}\leq2^{-\lfloor\frac{n}{2}\rfloor}.\]  
Suppose that for some $n_0\in\omega$,  \[\frac{|\psi(n_0)|}{2^{b(n_0+1)-b(n_0)}}>2^{-\lfloor\frac{n_0}{2}\rfloor}.\] 
For $S\subseteq\psi(n)$ set 
\[i(S):=\max\left\{|X|:X\subseteq S,\,\mu\bigg(\bigwedge_{s\in X}B_{n,s}\bigg)>0\right\}.\]
By~\cite[Prop. 1]{Kelley}, 
\[2^{-\lfloor\frac{n}{2}\rfloor}\leq\inf\{\mu(B_{n,s}):s\in\psi(n)\}\leq\inf\bigg\{\frac{i(S)}{|S|}:\emptyset\subsetneq S\subseteq\psi(n)\bigg\},\]
in particular, \[2^{b(n_0+1)-b(n_0)-n_0}<\frac{|\psi(n_0)|}{2^{\lfloor\frac{n_0}{2}\rfloor}}\leq i(\psi(n_0)).\]
Choose $X\subseteq\psi(n_0)$ such that $|X|>2^{b(n_0+1)-b(n_0)-n_0}$ and $\mu\big(\bigwedge_{s\in X}B_{n_0,s}\big)>0$. Hence, $\bigwedge_{s\in X}B_{n_0,s}\Vdash X\subseteq\dot\varphi(n_0)$, so $\bigwedge_{s\in X}B_{n_0,s}\Vdash |\dot\varphi(n_0)|>2^{b(n_0+1)-b(n_0)-n_0}$, which is a contradiction.

Thus
\[C^*:=\{x\in2^\omega:\forall^\infty n\in\omega\colon x{\upharpoonright}[b(n), b(n+1))\in\psi(n)\}\]
is a member of $\Ewf$ by~\autoref{toolE}. 

To end the proof, let us argue that if $x\not\in C^*$, then $\Vdash_\Bor x\not\in \dot C$. Suppose that $x\not\in C^*$. Towards a contradiction assume that $p\Vdash_\Bor x\in \dot C$ for some $p\in\Bor$. Since $\Vdash_\Bor\dot C\subseteq H_{b,\dot \varphi}$, we can assume wlog that there is some $m\in\omega$ such that $p\Vdash\forall n\geq m\colon x{\upharpoonright}[b(n), b(n+1))\in\dot \varphi(n))$ and $\mu(p)>2^{-m}$.

On the other hand, since $x\not\in C^*$, we can find an $n\geq 2m$ such that $x{\upharpoonright}[b(n), b(n+1))\not\in\psi(n)$. In particular, 
\[\mu(B_{n,\,x{\upharpoonright}[b(n), b(n+1)})<\frac{1}{2^m}.\]
Now define $q:=p\smallsetminus B_{n,\,x{\upharpoonright}[b(n), b(n+1))}$. Then $\mu(q)>0$ and $q\Vdash x{\upharpoonright}[b(n), b(n+1))\not\in\dot \varphi(n)$, which is a contradiction.
\end{proof}

We are ready to prove the consistency of $\non(\Ewf)<\cov(\Ewf)$.

\begin{lemma}\label{non<cov}
Assume $\aleph_1\leq\nu\leq\lambda=\lambda^{\aleph_0}$ with $\nu$ regular. Let $\Bor_{\pi}$ be a FS iteration of random forcing of length $\pi=\lambda\nu$. Then, in $V^{\Bor_{\pi}}$,  $\non(\Ewf)=\bfrak=\aleph_1$, $\cov(\Nwf)=\non(\Mwf)=\cov(\Mwf)=\non(\Nwf)=\nu$, and $\cov(\Ewf)=\dfrak=\lambda.$
\end{lemma}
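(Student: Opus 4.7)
The plan is to read off the five cardinals of $\Ewf$ in $V^{\Bor_\pi}$ from the classical Cichoń-diagram values of this model, narrowed by the ZFC bounds of~\autoref{ThmZFC:E} and pinned exactly by~\autoref{thm:pnon(E)} where needed. First, I would quote the standard analysis of FS iterations of random forcing of length $\pi=\lambda\nu$ (see e.g.~\cite[Sec.~3.5]{BJ} or~\cite{mejiamatrix}): since random is $\baire$-bounding, all new dominating and unbounded reals come from the Cohen reals generated at $\omega$-cofinal limits of the iteration, yielding $\bfrak=\aleph_1$ and $\dfrak=\cfrak=\lambda$; random reals avoid all previously-coded Borel null sets, so $\cov(\Nwf)\geq\cf(\pi)=\nu$, and the standard preservation analysis caps $\cov(\Nwf)=\cov(\Mwf)=\non(\Mwf)=\non(\Nwf)=\nu$.

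Plugging these into~\autoref{ThmZFC:E}(3) gives $\add(\Ewf)=\add(\Mwf)=\aleph_1$ and $\cof(\Ewf)=\cof(\Mwf)=\lambda$; parts (1)--(2) pin $\non(\Ewf)\in[\aleph_1,\nu]$ and $\cov(\Ewf)\in[\nu,\lambda]$, so only the upper bound $\non(\Ewf)\leq\aleph_1$ and the lower bound $\cov(\Ewf)\geq\lambda$ still need work.

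For $\non(\Ewf)\leq\aleph_1$, I would show that $2^\omega\cap V$ — of cardinality $\aleph_1$ under CH — stays out of $\Ewf$ in $V^{\Bor_\pi}$. Assume otherwise: some name $\dot C=\bigcup_n\dot C_n$ for an $F_\sigma$ null set covers $2^\omega\cap V$. Apply~\autoref{thm:pnon(E)} to each closed component $\dot C_n$, propagated inductively through the iteration — directly at random successor stages, by ccc-absoluteness of Borel codes at limits of uncountable cofinality, and using the preservation of outer measure by Cohen forcing at $\omega$-cofinal limits — to produce a single $C^*=\bigcup_n C^*_n\in\Ewf^V$ with $2^\omega\cap V\subseteq C^*$, contradicting $2^\omega\notin\Ewf^V$.

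For $\cov(\Ewf)\geq\lambda$ I would argue by contradiction: given $\kappa<\lambda$ and a putative cover $\{\dot C_i\}_{i<\kappa}$ by $F_\sigma$ null sets, their names have countable support by ccc, so the joint support $S\subseteq\pi$ has $|S|\leq\kappa<\lambda$. When $\kappa<\cf(\pi)=\nu$, $S$ is bounded in $\pi$ and any random real added at a later stage avoids every $\dot C_i$. When $\kappa\geq\nu$, combine~\autoref{lem:combE} and~\autoref{lem:mon} with $\dfrak=\lambda$ to dominate the witnessing bases $\tilb_i$ by a single $\tilb$, reducing the covering question to an anti-localization statement about $\Swf$-slaloms that is settled by the same preservation propagated through the iteration. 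The main technical obstacle throughout is the $\omega$-cofinal limit stages, where the FS iteration adds Cohen reals and $\Bor_\pi$ is not a measure algebra, so~\autoref{thm:pnon(E)} must be propagated inductively through the iteration rather than applied in one shot.
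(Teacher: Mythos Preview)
Your high-level plan coincides with the paper's: quote the classical Cicho\'n values for the FS random iteration and then invoke \autoref{thm:pnon(E)} for the $\Ewf$-specific bounds. The paper's own proof is in fact terser than yours --- it simply cites \cite[Thm.~5.4]{Car23} for the iteration analysis and says ``by employing \autoref{thm:pnon(E)}'' for $\non(\Ewf)=\aleph_1$ and $\cov(\Ewf)=\lambda$, without spelling out how the single-step lemma propagates.

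Your more detailed sketch, however, has two genuine gaps.

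\textbf{For $\non(\Ewf)\leq\aleph_1$.} You aim to produce a ground-model $C^*\in\Ewf$ with $\dot C\subseteq C^*$ by propagating \autoref{thm:pnon(E)} through the iteration. That strong covering property (every new closed null set lies inside an old one) \emph{fails} at the $\omega$-cofinal limits: the Cohen real $c$ added there avoids every ground-model nowhere dense set, in particular every ground-model closed null set, so the closed null set $\{c\}$ is not covered by any $C^*\in V$. Your phrase ``preservation of outer measure by Cohen forcing'' names the correct replacement tool, but it does not manufacture $C^*$; it shows directly that $2^\omega\cap V$ stays non-null, hence $\notin\Nwf\supseteq\Ewf$. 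That is a different invariant, and the two strategies are conflated in your write-up. (The actual iteration argument maintains ``$2^\omega\cap V\notin\Nwf$'' or, more generally, uses a Judah--Shelah style preservation theorem; \autoref{thm:pnon(E)} is only the successor-step ingredient.)

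\textbf{For $\cov(\Ewf)\geq\lambda$ when $\nu\leq\kappa<\lambda$.} ``Dominate the witnessing bases $\tilb_i$ by a single $\tilb$ using $\dfrak=\lambda$'' goes the wrong direction: $\dfrak=\lambda$ says you \emph{need} $\lambda$ functions to dominate, not that $\kappa<\lambda$ functions admit a common bound. Since $\bfrak=\aleph_1$ in this model, already $\aleph_1$ functions can fail to be dominated by one, so the reduction to a single anti-localization problem does not go through. The correct argument exploits that the iterands are ground-model random forcing, so $\Bor_\pi$ factors as $\Bor_S\times\Bor_{\pi\smallsetminus S}$ for the countable-support set $S$ of size $\kappa<\lambda$; any random real from a coordinate in $\pi\smallsetminus S$ then avoids all $\kappa$ members of the family.
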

\begin{proof}
$\Bor_{\pi}$ forces $\bfrak=\aleph_1$, $\cov(\Nwf)=\non(\Mwf)=\cov(\Mwf)=\non(\Nwf)=\nu$, and $\dfrak=\lambda$ is a well-known fact (see e.g~\cite[Thm.~5.4]{Car23}). Furthermore, by employing~\autoref{thm:pnon(E)}, we can ensure that $\Bor_{\pi}$ forces $\non(\Ewf)=\aleph_1$ and $\cov(\Ewf)=\lambda$.
\end{proof}

As an immediate consequence of the foregoing, we obtain as well: 

\begin{corollary}[{\cite[Thm.~ 5.5 and~5.6]{BS1992}}]
It is consistent with ZFC that $\non(\Ewf)<\min\{\non(\Nwf), \non(\Mwf)\}$ and $\cov(\Ewf)>\max\{\cov(\Nwf),\cov(\Mwf)\}$.
\end{corollary}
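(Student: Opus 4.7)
The plan is to observe that the corollary falls out of Lemma~\ref{non<cov} by a single choice of parameters; essentially no further work is required beyond picking $\nu$ and $\lambda$ appropriately. The strategy is to apply the lemma with $\aleph_1 < \nu < \lambda$, $\nu$ regular, and $\lambda^{\aleph_0} = \lambda$, for example starting from a ground model of GCH and taking $\nu := \aleph_2$ and $\lambda := \aleph_3$ (so that $\aleph_3^{\aleph_0}=\aleph_3$).

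In the resulting extension $V^{\Bor_\pi}$, Lemma~\ref{non<cov} simultaneously gives $\non(\Ewf) = \aleph_1$ together with $\non(\Nwf) = \non(\Mwf) = \nu$, and $\cov(\Ewf) = \lambda$ together with $\cov(\Nwf) = \cov(\Mwf) = \nu$. From $\aleph_1 < \nu$ we read off
\[
\non(\Ewf) = \aleph_1 < \nu = \min\{\non(\Nwf), \non(\Mwf)\},
\]
and from $\nu < \lambda$ we read off
\[
\cov(\Ewf) = \lambda > \nu = \max\{\cov(\Nwf), \cov(\Mwf)\}.
\]
Both strict inequalities therefore hold in the same generic extension, witnessing the desired consistency.

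There is no real obstacle here: the only thing to check is that the parameter regime $\aleph_1<\nu<\lambda$ is permitted by the hypotheses of Lemma~\ref{non<cov}, which it is, and that the cardinal arithmetic assumption $\lambda^{\aleph_0}=\lambda$ can be secured in the ground model (handled by starting from a GCH model, consistent with the blanket CH assumption made at the start of the section). All heavy lifting about the finite support iteration of random forcing, including the key upper bound on $\non(\Ewf)$ via Lemma~\ref{thm:pnon(E)}, is already packaged inside Lemma~\ref{non<cov}.
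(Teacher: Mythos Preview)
Your proposal is correct and matches the paper's approach exactly: the paper presents this corollary with no proof, prefacing it only with ``As an immediate consequence of the foregoing, we obtain as well,'' so your explicit choice of parameters $\aleph_1<\nu<\lambda$ in Lemma~\ref{non<cov} is precisely the intended argument. One minor remark: the blanket CH assumption you cite is stated at the start of Section~2, not Section~3, but this is harmless since you can simply pass to a GCH ground model before applying the lemma.
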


We close this section by displaying another model where the consistency of $\non(\Ewf)<\cov(\Ewf)$ holds.

\begin{theorem}[{see e.g~\cite{Brendlecurse}}]
Let $\lambda$ be an infinite cardinal such that $\lambda^{\aleph_0}=\lambda$. Then $\Cbb_\lambda$ forces $\non(\Mwf)=\aleph_1$ and $\cov(\Mwf)=\cfrak=\lambda$ (\autoref{fig:cichoncohen}). In particular, $\Cbb_\lambda$ forces $\non(\Ewf)=\aleph_1$ and $\cov(\Ewf)=\lambda$.
\begin{figure}[H]
\centering
\begin{tikzpicture}
\small{
\node (aleph1) at (-1,3) {$\aleph_1$};
\node (addn) at (1,3){$\add(\Nwf)$};
\node (covn) at (1,7){$\cov(\Nwf)$};
\node (nonn) at (9,3) {$\non(\Nwf)$} ;
\node (cfn) at (9,7) {$\cof(\Nwf)$} ;
\node (addm) at (3.66,3) {$\add(\Mwf)$} ;
\node (covm) at (6.33,3) {$\cov(\Mwf)$} ;
\node (nonm) at (3.66,7) {$\non(\Mwf)$} ;
\node (cfm) at (6.33,7) {$\cof(\Mwf)$} ;
\node (b) at (3.66,5) {$\bfrak$};
\node (d) at (6.33,5) {$\dfrak$};
\node (c) at (11,7) {$\cfrak$};
\draw (aleph1) edge[->] (addn)
      (addn) edge[->] (covn)
      (covn) edge [->] (nonm)
      (nonm)edge [->] (cfm)
      (cfm)edge [->] (cfn)
      (cfn) edge[->] (c);
\draw
   (addn) edge [->]  (addm)
   (addm) edge [->]  (covm)
   (covm) edge [->]  (nonn)
   (nonn) edge [->]  (cfn);
\draw (addm) edge [->] (b)
      (b)  edge [->] (nonm);
\draw (covm) edge [->] (d)
      (d)  edge[->] (cfm);
\draw (b) edge [->] (d);

\draw[color=blue] (5,2.5)--(5,7.5); 
\draw[circle, fill=yellow,color=yellow] (2.33,5) circle (0.5);
\draw[circle, fill=yellow,color=yellow] (7.66,5) circle (0.5);
\node at (2.33,5) {$\aleph_1$};
\node at (7.66,5) {$\lambda$};
}
\end{tikzpicture}
\caption{The constellation of Cicho\'n's diagram after adding $\lambda=\lambda^{\aleph_0}$ many Cohen reals.}\label{fig:cichoncohen}
\end{figure}
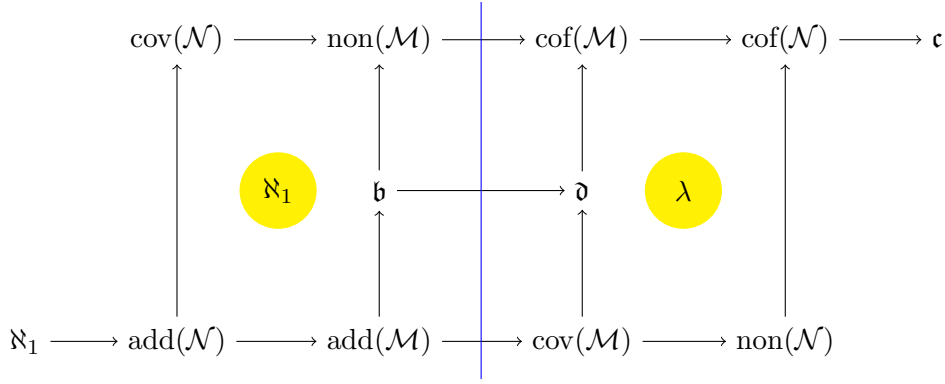
\end{theorem}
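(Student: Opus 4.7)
The plan is to first peel off the \emph{in particular} clause, which is automatic from \autoref{ThmZFC:E}: once $\Cbb_\lambda$ forces $\non(\Mwf)=\aleph_1$ and $\cov(\Mwf)=\cfrak=\lambda$, item (2) of that theorem gives $\non(\Ewf)\leq\non(\Mwf)=\aleph_1$, and the ZFC inequality $\aleph_1\leq\non(\Ewf)$ closes the gap; similarly, item (1) gives $\cov(\Ewf)\geq\cov(\Mwf)=\lambda$, while $\cov(\Ewf)\leq\cfrak=\lambda$ is trivial. Thus the real work is to verify the three equalities $\cfrak=\lambda$, $\cov(\Mwf)=\lambda$, and $\non(\Mwf)=\aleph_1$ in $V^{\Cbb_\lambda}$, and all three rest on one structural fact about $\Cbb_\lambda$ that I would isolate at the outset.

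That fact is the standard \emph{countable support of names}: since $\Cbb_\lambda$ is the finite-support product of $\lambda$ copies of Cohen forcing and is ccc, every $\Cbb_\lambda$-name $\dot r$ for an element of $2^\omega$ is (equivalent to) a $\Cbb_S$-name for some countable $S\subseteq\lambda$. Hence any real, and in particular any $F_\sigma$ meager set coded by a real, in $V[G]$ lives already in some $V[G\frestr S]$ with $|S|\leq\aleph_0$; moreover, by the product lemma, for any $\alpha\in\lambda\setminus S$ the coordinate Cohen real $c_\alpha$ is Cohen over $V[G\frestr S]$, so $c_\alpha$ misses every meager Borel set coded in $V[G\frestr S]$.

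Now $\cfrak=\lambda$ follows from a nice-name count: $\Cbb_\lambda$ has a dense subset of size $\lambda$ and is ccc, so by $\lambda^{\aleph_0}=\lambda$ there are at most $\lambda$ nice names for reals, giving $\cfrak\leq\lambda$, while the $\lambda$ distinct Cohen reals give $\cfrak\geq\lambda$. For $\cov(\Mwf)=\lambda$: given a family $\{M_\xi:\xi<\kappa\}$ of meager sets in $V[G]$ with $\kappa<\lambda$, cover each $M_\xi$ by an $F_\sigma$ meager Borel set and pick a countable $S_\xi\subseteq\lambda$ such that the code lies in $V[G\frestr S_\xi]$; then $|\bigcup_{\xi<\kappa}S_\xi|<\lambda$, so any $\alpha\in\lambda\setminus\bigcup_{\xi<\kappa}S_\xi$ yields $c_\alpha\notin\bigcup_{\xi<\kappa}M_\xi$, and $\cov(\Mwf)\geq\lambda$; the reverse $\cov(\Mwf)\leq\cfrak$ is ZFC. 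For $\non(\Mwf)=\aleph_1$: I would claim $X:=\{c_\alpha:\alpha<\omega_1\}$ is non-meager. Given $M\in\Mwf^{V[G]}$, fix countable $S\subseteq\lambda$ with an $F_\sigma$ meager cover of $M$ coded in $V[G\frestr S]$; since $|S|\leq\aleph_0<\omega_1$, pick $\alpha<\omega_1$ with $\alpha\notin S$, and then $c_\alpha$ is Cohen over $V[G\frestr S]$, so $c_\alpha\notin M$. Hence $X\not\subseteq M$, which shows $\non(\Mwf)\leq|X|\leq\aleph_1$, and $\aleph_1\leq\non(\Mwf)$ is ZFC.

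The only genuinely delicate point is the structural lemma about names: that every real (and hence every Borel meager code) in a finite-support Cohen product extension depends on countably many coordinates, and that the remaining coordinates are still Cohen-generic over the corresponding intermediate model. Once this is in hand, each equality reduces to choosing an index $\alpha$ outside a small set and invoking the basic fact that a Cohen real avoids every meager Borel set from the ground model; no new forcing argument is needed.
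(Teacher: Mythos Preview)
Your argument is correct and is exactly the standard route one finds in the reference the paper cites. Note, however, that the paper itself does \emph{not} supply a proof of this theorem: it simply states the result with the attribution ``see e.g.~\cite{Brendlecurse}'' and moves on, so there is no in-paper proof to compare against. Your write-up---isolating the countable-support-of-names lemma for the ccc finite-support Cohen product, using the product lemma to see that each $c_\alpha$ with $\alpha\notin S$ is Cohen over $V[G\frestr S]$, and then reading off $\cfrak=\lambda$, $\cov(\Mwf)=\lambda$, $\non(\Mwf)=\aleph_1$ via the escaping Cohen real, finally deducing the $\Ewf$-values from \autoref{ThmZFC:E}---is precisely the argument one would expect in a fleshed-out version of the cited source, and every step is sound.
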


\section{The consistency of  \texorpdfstring{$\add(\Ewf)<\non(\Ewf)$}{}}

In this section, we shall use a FS iteration of $(b,h)$-localization forcing (denoted by $\LOCor_{b,h}$) proposed by Brendle and Mej\'ia~\cite{BrM} to prove the consistency of $\add(\Ewf)<\non(\Ewf)=\cov(\Ewf).$ We first start by introducing $\LOCor_{b,h}$ that we want to iterate.

\begin{definition}\label{defloc}
Given a sequence of non-empty sets $b = \seq{b(n)}{n\in\omega}$ and $h\colon \omega\to\omega$, define 
\begin{align*}
 \prod b &:= \prod_{n\in\omega}b(n),  \\
 \Swf(b,h) &:= \prod_{n\in\omega} [b(n)]^{\leq h(n)}.
\end{align*}
For two functions $x\in\prod b$ and $\varphi\in\Swf(b,h)$, we define
\[x\,\in^*\varphi\textrm{\ iff\ }\forall^\infty n\in\omega\colon x(n)\in \varphi(n).\]
We define the \emph{localization cardinals}
\begin{align*}
    \blc_{b,h}&=\min\set{|F|}{F\subseteq \prod b\;\&\;\neg\exists \varphi \in \Swf(b,h)\,\forall x \in F\colon x\in^* \varphi},\\
     \dlc_{b,h}&=\min\set{|G|}{G\subseteq \Swf(b,h)\;\&\;\forall x \in \prod b\,\exists \varphi \in G\colon x\in^* \varphi}.
\end{align*}
\end{definition}

Using the notion of~\autoref{defloc}, we define the following poset meant to increase $\non(\Ewf)$.

\begin{definition}
Fix $b$ and $h$ as in \autoref{defloc}. Define the localization forcing $\LOCor_{b,h}$ by \[\LOCor_{b,h}:=\set{(p,n)}{p\in\Swf(b,h),\, n<\omega\textrm{\ and\ }\exists m<\omega\,\forall i<\omega\colon |p(i)|\leq m},\]
ordered by $(p',n')\leq(p,n)$ iff $n\leq n'$, $p'{\upharpoonright}n=p$, and $\forall i<\omega\colon p(i)\subseteq p'(i)$. 

Let $G\subseteq\LOCor_{b,h}$ be a $\LOCor_{b,h}$-generic filter over $V$. In $V[G]$, define the generic slalom by
\[\dot\varphi_{\gen}(i)=\bigcup\{p(i):\, (p,n)\in G\},\] and for any $x\in\prod b\cap V$, there is some $n\in\omega$ such that for any $m\geq n$, $x(m)\in\dot\varphi_\gen(m)$. Consequently, $\LOCor_{b,h}$ increases $\blc_{b,h}$.

$\LOCor_{b,h}$ is $\sigma$-$m$-linked for all $2\leq m<\omega$ when $h$ diverges to infinity and $b(n)$ is countable for all $n<\omega$.
\end{definition}

Why does $\LOCor_{b,h}$ increase $\non(\Ewf)$? Because there exists a connection between $\non(\Ewf)$ and $\blc_{b,h}$. Indeed, we have:

\begin{lemma}[{\cite[Fact.~7.7 and Lem.~7.8]{CMlocalc}}]\label{uppbE}
If $b\in\omega^\omega$ and $\limsup_{n\to\infty}\frac{h(n)}{b(n)}<1$, then  $\cov(\Ewf)\leq\dlc_{b,h}$ and $\blc_{b,h}\leq\non(\Ewf)$.
\end{lemma}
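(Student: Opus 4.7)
The plan is to build a single map $\Phi\colon 2^\omega\to\prod b$ that transports both inequalities simultaneously, by turning $\in^*$-domination by a slalom in $\Swf(b,h)$ into containment in an $F_\sigma$ null set of the form provided by \autoref{toolE}. Once $\Phi$ is set up, any $\in^*$-dominating family of size $\dlc_{b,h}$ pushes forward to an $\Ewf$-covering family of the same size, and any $A\subseteq 2^\omega$ of size $<\blc_{b,h}$ has $\Phi[A]$ bounded by a single slalom, placing $A$ inside one set of $\Ewf$.

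Using $\limsup_{n\to\infty} h(n)/b(n)<1$, fix $\delta<1$ and $N_0\in\omega$ with $h(n)\leq\delta\, b(n)$ for every $n\geq N_0$. Choose an increasing $\tilb\in\omega^\omega$ with $\tilb(0)=0$ such that $b^*(n):=2^{\tilb(n+1)-\tilb(n)}$ satisfies $b^*(n)\geq n\,b(n)$ for $n\geq N_0$ (and merely $b^*(n)\geq b(n)$ for $n<N_0$); write $I_n:=[\tilb(n),\tilb(n+1))$. For each $n$, pick a balanced surjection $\pi_n\colon 2^{I_n}\to b(n)$ whose fibers all have size at most $\lceil b^*(n)/b(n)\rceil$ (for instance, enumerate $2^{I_n}$ and send the $k$-th element to $k\bmod b(n)$). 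Define $\Phi\colon 2^\omega\to\prod b$ by $\Phi(x)(n):=\pi_n(x\upharpoonright I_n)$.

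The crucial estimate is this: for $\varphi\in\Swf(b,h)$, set $\psi_\varphi(n):=\pi_n^{-1}[\varphi(n)]\subseteq 2^{I_n}$. Then $|\psi_\varphi(n)|\leq h(n)\lceil b^*(n)/b(n)\rceil$, hence for $n\geq N_0$
\[\frac{|\psi_\varphi(n)|}{b^*(n)}\leq\frac{h(n)}{b(n)}+\frac{h(n)}{b^*(n)}\leq\delta+\frac{1}{n}.\]
Consequently $\limsup_{n\to\infty}|\psi_\varphi(n)|/b^*(n)\leq\delta<1$, so in particular the liminf is strictly below $1$, and \autoref{toolE} yields $H_{\tilb,\psi_\varphi}\in\Ewf$. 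Unwinding definitions gives
\[\Phi(x)\in^*\varphi \iff \forall^\infty\, n\colon x\upharpoonright I_n\in\psi_\varphi(n) \iff x\in H_{\tilb,\psi_\varphi}.\]

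Both inequalities now fall out at once. If $G\subseteq\Swf(b,h)$ witnesses $\dlc_{b,h}$, then every $x\in 2^\omega$ has $\Phi(x)\in^*\varphi$ for some $\varphi\in G$, so $\{H_{\tilb,\psi_\varphi}:\varphi\in G\}\subseteq\Ewf$ covers $2^\omega$ and $\cov(\Ewf)\leq\dlc_{b,h}$. Dually, if $A\subseteq 2^\omega$ has $|A|<\blc_{b,h}$, then $\Phi[A]$ is $\in^*$-bounded by some $\varphi\in\Swf(b,h)$, so $A\subseteq H_{\tilb,\psi_\varphi}\in\Ewf$, giving $\blc_{b,h}\leq\non(\Ewf)$. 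The delicate point I expect to wrestle with is the combination of the ceiling correction $\lceil b^*/b\rceil$ with the hypothesis being only on $\limsup h/b<1$ (rather than $h/b\to 0$); the fix is precisely to stretch $\tilb$ so that $b^*(n)$ outruns $b(n)$, absorbing the ceiling into a negligible additive term and leaving the controlling ratio essentially equal to $h(n)/b(n)$.
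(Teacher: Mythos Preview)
Your proof is correct. The paper itself does not prove this lemma but merely cites \cite{CMlocalc}, so there is no in-paper argument to compare against; your construction of the coding map $\Phi$ via balanced surjections $\pi_n\colon 2^{I_n}\to b(n)$, together with the verification that preimages of slaloms in $\Swf(b,h)$ land in $\Ewf$ via \autoref{toolE}, is the standard route and carries both inequalities at once through the equivalence $\Phi(x)\in^*\varphi\Leftrightarrow x\in H_{\tilb,\psi_\varphi}$.
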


We now prove the consistency of $\add(\Ewf)<\non(\Ewf)=\cov(\Ewf).$

\begin{lemma}\label{add<non=cov}
Assume $\aleph_1\leq\kappa\leq\lambda=\lambda^{\aleph_0}$ with $\kappa$ regular. Let $\Por_{\pi}$ be a FS iteration of the (b, h)-localization forcing of length $\pi=\lambda\kappa$. Then, in $V^{\Por_{\pi}}$, $\bfrak=\aleph_1$, $\non(\Ewf)=\cov(\Ewf)=\kappa$, and $\cfrak=\lambda$. 
\end{lemma}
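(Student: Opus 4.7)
We fix parameters $b,h\in\omega^\omega$ with $h$ diverging to infinity, $b(n)$ countable for each $n$, and $\limsup_{n\to\infty}h(n)/b(n)<1$; this ensures that each iterand $\LOCor_{b,h}$ is $\sigma$-$m$-linked for every $m\geq 2$ (hence ccc) and that \autoref{uppbE} applies. Consequently $\Por_\pi$ is ccc of size $\lambda$, and since $\lambda^{\aleph_0}=\lambda$, the standard nice-name counting together with the fact that the generic slaloms at distinct stages yield distinct new reals gives $\cfrak=\lambda$ in $V^{\Por_\pi}$.

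The core computation is $\blc_{b,h}=\dlc_{b,h}=\kappa$ in $V^{\Por_\pi}$. Because $\cf(\pi)=\kappa$ and $\Por_\pi$ is ccc, any family $F\subseteq\prod b$ of cardinality $<\kappa$ already appears in some intermediate extension $V^{\Por_\alpha}$ with $\alpha<\pi$; by genericity the slalom $\dot\varphi_\gen$ introduced at stage $\alpha$ captures every member of $F$, so $F$ cannot witness $\blc_{b,h}<\kappa$. Dually, fixing a cofinal sequence $\langle\alpha_\xi:\xi<\kappa\rangle$ in $\pi$, every real in $\prod b$ in the final extension appears at some bounded stage $\beta<\pi$ and is then captured by the generic slalom $\dot\varphi_{\alpha_\xi}$ for any $\alpha_\xi>\beta$; hence $\{\dot\varphi_{\alpha_\xi}:\xi<\kappa\}$ is a $(b,h)$-localizing family of size $\kappa$, giving $\dlc_{b,h}\leq\kappa$. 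Applying \autoref{uppbE}, we obtain $\non(\Ewf)\geq\blc_{b,h}\geq\kappa$ and $\cov(\Ewf)\leq\dlc_{b,h}\leq\kappa$.

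To close the gap to $\non(\Ewf)=\cov(\Ewf)=\kappa$, we invoke \autoref{ThmZFC:E}, which gives $\non(\Ewf)\leq\non(\Mwf)$ and $\cov(\Mwf)\leq\cov(\Ewf)$. The standard analysis of Cohen reals produced at FS-limit stages of cofinality $\omega$ yields both $\non(\Mwf)\leq\kappa$ and $\cov(\Mwf)\geq\kappa$: along a cofinal sequence of $\kappa$ such limit stages below $\pi$, the resulting family of Cohen reals is not meager, since every meager set in $V^{\Por_\pi}$ is coded at a bounded stage and thus avoided by all later Cohen reals; dually, any fewer than $\kappa$ meager sets are coded at a single bounded stage (using ccc and $\cf(\pi)=\kappa$) and missed by the next Cohen real. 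Thus $\non(\Ewf)=\cov(\Ewf)=\kappa$.

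It remains to verify $\bfrak=\aleph_1$. Since $\LOCor_{b,h}$ is $\omega^\omega$-bounding (by a fusion argument exploiting the explicit bound $m$ built into its conditions), and the FS iteration of ccc forcings that do not add dominating reals itself adds no dominating real \cite{BrM}, the ground-model family $\baire\cap V$, of size $\aleph_1$ by CH, remains unbounded in $V^{\Por_\pi}$, yielding $\bfrak=\aleph_1$. The principal technical obstacle is precisely this last preservation: since $\sigma$-centered forcings such as Hechler's do add dominating reals, the full bounding property of each individual iterand, rather than merely its linked structure, is essential here; everything else reduces to the cofinality bookkeeping of the previous paragraphs together with \autoref{uppbE} and \autoref{ThmZFC:E}.
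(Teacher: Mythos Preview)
Your arguments for $\cfrak=\lambda$ and for $\non(\Ewf)=\cov(\Ewf)=\kappa$ are sound; in fact you spell out more than the paper does (the paper simply cites \cite[Lem.~2.9]{CM22} for $\blc_{b,h}=\dlc_{b,h}=\kappa$ and then invokes \autoref{uppbE}). Routing the upper bound on $\non(\Ewf)$ and the lower bound on $\cov(\Ewf)$ through $\non(\Mwf)$ and $\cov(\Mwf)$ via the Cohen reals at FS-limit stages is a perfectly legitimate alternative.

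The argument for $\bfrak=\aleph_1$, however, has a genuine gap. First, the claim that $\LOCor_{b,h}$ is $\omega^\omega$-bounding ``by a fusion argument'' is not justified: fusion is a technique for tree-like proper forcings (Sacks, Miller, Laver), not for $\sigma$-linked ccc posets of this kind. Second, and more importantly, the assertion that a FS iteration of ccc forcings that do not add dominating reals itself adds no dominating real is not a theorem and is not in \cite{BrM}; handling limit stages of countable cofinality is exactly the delicate point, and bare ``non-dominating'' does not iterate. Your closing diagnosis is in fact backwards: Hechler forcing is $\sigma$-centered and adds a dominating real, yes --- but Hechler is \emph{not} $\sigma$-$\Fr$-linked, and it is precisely the $\Fr$-linked structure (rather than any bounding property) that does the work. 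The paper's route is to prove that each $L_{b,h}(s,m)$ is $\Fr$-linked (\autoref{mejiavertLOC}, using crucially that every $[b(i)]^{\leq m}$ is finite), and then to invoke \autoref{bassoft} and \autoref{bassoftII} together with the preservation machinery of \cite{mejiavert,BCM} to conclude that the iteration adds no dominating real.
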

\begin{proof}
In $V$, find increasing functions $ h, b$ in $\omega^\omega$ such that
$\limsup_{n\to\infty}\frac{h(n)}{b(n)}<1$, so by~\autoref{uppbE} $\cov(\Ewf)\leq\dlc_{b,h}$ and $\blc_{b,h}\leq\non(\Ewf)$.  In view of~\cite[Lem.~2.9]{CM22}, $\Por_\pi$ forces  $\non(\Ewf)=\blc_{b,h}=\cov(\Ewf)=\dlc_{b,h}=\kappa$. Moreover, $\Por_\pi$ forces $\cfrak=\lambda$. It remains to prove that $\Por_\pi$ forces $\bfrak=\aleph_1$. For this, it suffices to show that $\LOCor_{b,h}$ is $\sigma$-Fr-linked. 
\end{proof}

Before attempting to prove that $\LOCor_{b,h}$ is $\sigma$-Fr-linked, we begin with some notation: 

\begin{itemize}
    \item Denote by $\Fr:=\set{\omega\menos a}{a\in[\omega]^{<\aleph_0}}$ the \emph{Fr\'echet filter}.

    \item A filter $F$ on $\omega$ is \emph{free} if $\Fr\subseteq F$. A set $x\subseteq\omega$ is \emph{$F$-positive} if it intersects every member of $F$. Denote by $F^+$ the family of all $F$-positive sets. Note that $x\in\Fr^+$ iff $x$ is an infinite subset of $\omega$.
\end{itemize}

For not adding dominating reals, we have the following notion.

\begin{definition}[{\cite{mejiavert, BCM}}]\label{Def:Fr}
    Let $\Por$ be a poset and $F$ be a filter on $\omega$. A set $Q\subseteq \Por$ is \emph{$F$-linked} if, for any $\bar p=\seq{p_n}{n<\omega} \in Q^\omega$, there is some $q\in \Por$ forcing that \[q\Vdash\set{n\in\omega}{p_n\in\dot G}\in F^+.\]
    Observe that, in the case $F=\Fr$, the above equation is “$\set{n\in\omega}{p_n\in\dot G}$ is infinite”. 
    
    We say that $Q$ is \emph{uf-linked (ultrafilter-linked)} if it is $F$-linked for any filter $F$ on $\omega$ containing the \emph{Fr\'echet filter} $\Fr$.
    
    For an infinite cardinal $\mu$, $\Por$ is \emph{$\mu$-$F$-linked} if $\Por = \bigcup_{\alpha<\mu}Q_\alpha$ for some $F$-linked $Q_\alpha$ ($\alpha<\mu$). When these $Q_\alpha$ are uf-linked, we say that $\Por$ is \emph{$\mu$-uf-linked}.
\end{definition}

For ccc posets we have:

\begin{lemma}[{\cite[Lem~5.5]{mejiavert}}]\label{quasiuf}
If $\Por$ is ccc then any subset of $\Por$ is uf-linked iff it is Fr-linked. 
\end{lemma}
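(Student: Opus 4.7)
The direction ``uf-linked $\Rightarrow$ Fr-linked'' is immediate, since $\Fr$ itself is a filter extending $\Fr$. For the converse, my plan is to assume $\Por$ is ccc and $Q\subseteq\Por$ is Fr-linked, fix a free filter $F\supseteq\Fr$, and derive a contradiction from the hypothetical existence of a witness $\bar p=\seq{p_n}{n<\omega}\in Q^\omega$ to the failure of $F$-linkedness.

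First, using \autoref{basfor}(1) to rewrite $q\Vdash p_n\notin \dot G$ as $q\perp p_n$, and the maximum principle to convert ``$q\Vdash \dot X\notin F^+$'' into an explicit ground-model witness from $F$, the failure of $F$-linkedness at $\bar p$ amounts to the density in $\Por$ of
\[D:=\set{q\in\Por}{\exists\, A\in F\ \forall\, n\in A\colon q\perp p_n}.\]
Invoking ccc, I extract a maximal antichain $\{q_k:k<\omega\}\subseteq D$ with corresponding witnesses $A_k\in F$, so $q_k\perp p_n$ for every $n\in A_k$.

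The crux is then a pseudo-intersection argument against this countable family. Since $F$ is free, every finite intersection $\bigcap_{k\le n}A_k$ lies in $F$ and is in particular infinite, so a standard diagonal choice produces an infinite $A=\{a_n:n<\omega\}$ with $A\setminus A_k$ finite for every $k$. Applying Fr-linkedness to the subsequence $\seq{p_{a_n}}{n<\omega}\in Q^\omega$ yields some $q^*\in\Por$ forcing $\set{m\in A}{p_m\in\dot G}$ to be infinite. By maximality, $q^*$ is compatible with some $q_k$; a common extension $r\le q^*,q_k$ then forces $\set{m\in A}{p_m\in\dot G}\subseteq A\setminus A_k$, a contradiction since the right-hand side is a finite set in $V$.

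The main subtlety I foresee is the passage from ``infinite'' (the content of Fr-linkedness) to ``$F$-positive'' (the target for $F$-linkedness), which is exactly what the pseudo-intersection construction accomplishes. The construction works precisely because ccc reduces the \emph{a priori} large family of potential $F$-witnesses $\{A:A\in F\}$ to a \emph{countable} one $\{A_k:k<\omega\}$, and any countable subfamily of a free filter on $\omega$ admits a pseudo-intersection.
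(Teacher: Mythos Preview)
Your proof is correct. The paper itself does not supply a proof of this lemma; it is stated with a citation to \cite{mejiavert} and used as a black box. Your argument---reducing $F$-linkedness to Fr-linkedness by taking a maximal antichain inside the dense set $D$ of conditions carrying an explicit $F$-witness, then building a pseudo-intersection of the countably many witnesses $A_k$ and feeding the resulting subsequence back into Fr-linkedness---is a clean direct proof. The two hypotheses are used exactly where you say: ccc is what makes the family $\{A_k:k<\omega\}$ countable, and freeness of $F$ is what guarantees the infinite pseudo-intersection. One minor remark: the ``maximum principle'' is not really needed, since the existential witness $A$ ranges over the ground-model set $\check F$, so ordinary density (find $q'\le q$ deciding a particular $A\in F$) already gives the description of $D$.
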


Our objective is to demonstrate that $\LOCor_{b,h}$ is $\sigma$-$\Fr$-linked, witnessed by
\[L_{b,h}(s,m):=\set{(p,n)\in\LOCor_{b, h}}{s\subseteq p,\, n=|s|, \textrm{\ and\ }\forall i<\omega\colon |p(i)|\leq m}\]
for $s\in\Scal_{<\omega}(b,h)=\bigcup_{k\in\omega}\prod_{n<k}[b(n)]^{\le h(n)}$, and $m<\omega$.

In order to see that $L_{b,h}(s,m)$ is Fr-linked, it suffices to prove:

\begin{lemma}[{\cite{mejiavert}, see also~\cite{Car23}}]\label{mejiavertLOC}
Let $b,h\in\omega^\omega$ be such that $\forall i<\omega\colon b(i)>0$ and $h$ goes to infinity, let $D$ be a non-principal ultrafilter on $\omega$ and $(s,m)\in\Scal_{<\omega}(b,h)\times \omega$. If $\bar p=\la (p_n, |s|):n<\omega\ra$ is a sequence in $L_{b,h}(s,m)$ then there is a $(q,n)\in L_{b,h}(s,m)$ such that if $a\in D$, then $(q,n)$ forces that $\set{n\in\omega}{p_n\in\dot G}\cap a\neq\emptyset$ is infinite.
\end{lemma}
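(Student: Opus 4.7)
The plan is to build $q$ as an ``ultrafilter limit'' of the sequence $\bar p$. Since $[b(i)]^{\leq m}$ is finite and each $p_n(i)$ lies in it, for every $i \geq |s|$ the ultrafilter $D$ selects a unique $c(i) \in [b(i)]^{\leq m}$ such that $A_i := \set{n < \omega}{p_n(i) = c(i)} \in D$. I would set $q(i) := s(i)$ for $i < |s|$ and $q(i) := c(i)$ for $i \geq |s|$; then $|q(i)| \leq m$ and $|q(i)| \leq h(i)$ (the latter because $c(i) = p_n(i)$ for some $n$), so $q \in \Swf(b,h)$ and $(q, |s|) \in L_{b,h}(s, m)$.

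To show that for every $a \in D$ the condition $(q, |s|)$ forces $\set{n < \omega}{p_n \in \dot G} \cap a$ to be infinite, I fix an arbitrary extension $(q', n') \leq (q, |s|)$, a set $a \in D$ and $N < \omega$, and produce $n \in a$ with $n \geq N$ admitting a common extension of $(p_n, |s|)$ and $(q', n')$. The candidate is $(r, n')$ given by $r(i) := q'(i)$ for $i < n'$ and $r(i) := q'(i) \cup p_n(i)$ for $i \geq n'$. The inequality $(r, n') \leq (q', n')$ is automatic; the inequality $(r, n') \leq (p_n, |s|)$ reduces to $p_n(i) \subseteq q'(i)$ on $[|s|, n')$; and $r \in \Swf(b,h)$ reduces to $|q'(i) \cup p_n(i)| \leq h(i)$ for $i \geq n'$. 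Let $M$ uniformly bound $|q'(i)|$ (such an $M$ exists by definition of $\LOCor_{b,h}$), and set $F := \set{i \in \omega}{h(i) < M + m}$, which is finite since $h$ diverges. For $i \geq n'$ outside $F$ the size bound holds automatically. For the remaining indices $i \in [|s|, n') \cup (F \cap [n', \omega))$, I would restrict to $n \in A_i$, which gives $p_n(i) = c(i) \subseteq q'(i)$ (using $q'(i) \supseteq q(i) = c(i)$ from the order). The intersection $A := \bigcap_{i \in [|s|, n') \cup F} A_i$, being a finite intersection of members of $D$, still lies in $D$, so $A \cap a \in D$ and is therefore infinite; any $n \in A \cap a$ with $n \geq N$ works.

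The main obstacle is the finite set of coordinates $i \in F$ where $h(i)$ is already too small to admit any enlargement of $q'(i)$; the hypothesis $h \to \infty$ enters precisely to ensure $F$ is finite, so one can take an ultrafilter limit across it and still remain in $D$. Everything else is bookkeeping inside the partial order on $\LOCor_{b,h}$ and the definition of $L_{b,h}(s, m)$.
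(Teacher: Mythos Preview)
Your proof is correct and follows essentially the same ultrafilter-limit strategy as the paper: pick $q(i)$ as the $D$-selected value of the $p_n(i)$'s, then given an extension $(q',n')$ merge $q'$ with some $p_n$ beyond a suitable threshold, using that $p_n(i)=q(i)\subseteq q'(i)$ on the finitely many coordinates where the merge might otherwise overflow $h$. The only cosmetic difference is that the paper pushes the split point from $n'$ out to some $k\geq n'$ with $h(i)\geq m_0+m$ for all $i\geq k$, whereas you keep the split at $n'$ and absorb the finitely many bad coordinates $F\cap[n',\omega)$ into the ultrafilter intersection; both devices accomplish the same thing. (One trivial indexing quibble: your $A_i$ are defined only for $i\geq|s|$, so the intersection $\bigcap_{i\in[|s|,n')\cup F}A_i$ should either restrict $F$ to $[|s|,\omega)$ or set $A_i:=\omega$ for $i<|s|$, as the paper does.)
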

\begin{proof}
Assume that $\bar p=\la (p_n,|s|):n<\omega\ra$ is a sequence in $L_{b,h}(s,m)$. For each $i\geq|s|$, since $[b(i)]^{\leq m}$ is finite, we can find $q(i) \in [b(i)]^{\leq m}$ and an $a_i\in D$
such that $p_n(i)=q(i)$ for any $n\in a_i$. To get $(q,n)\in L_{b,h}(s, m)$ we define  $n:=|s|$ and $q(i):= s(i)$ for all $i<n$. It remains to show that $(q,n)$ forces $\set{n\in\omega}{p_n\in\dot G}\cap a\neq\emptyset$ whenever $a\in D$. To see this, assume that $(r,n^*)\leq (q,n)$ and
$a\in D$. Since $(r,n^*)\in\LOCor_{b,h}$, we can find $k, m_0 < \omega$ such that $k\geq n^*\geq n=|s|$, $|r(i)| \leq m_0$ for any $i<\omega$, and for any $i \geq k$, $m_0 + m \leq h(i)$. Choose some $n_0\in\bigcap_{i<k} a_i\cap a$ (put $a_i:=\omega$ for $i < |s|$). Note that $p_{n_0}(i)=q(i)$ for all $i<k$.

Now we define $q'(i)$ by 
\[q'(i)=\left\{\begin{array}{ll}
                r(i)  & \text{if $i < k$,}\\
                 r(i)\cup p_{n_0}(i) & \text{if $i\geq k$.}
                 \end{array}\right.\]
Since $|q'(i)| \leq m_0 + m \leq h(i)$ for all $i\geq k$, $(q',k)$ is a condition in $\LOCor_{b,h}$. Moreover, $(q', k)$ is a condition stronger than $r$ and $p_{n_0}$, so it forces $n_0\in \set{n\in\omega}{p_n\in\dot G}\cap a$.
\end{proof}

The next step now is to indicate that Fr-linked behaves well regarding preserving unbounded families on $\baire$, which helps to keep $\bfrak$ small in generic extension. This will be proved in a sequence of lemmas.

\begin{lemma}[The author and Mej\'ia]\label{bassoft}
Let $\Por$ be a poset and $Q\subseteq\Por$. Then $Q$ is Fr-linked iff, for each $\Por$-name $\dot n$ of a natural number there is some $m<\omega$ such that $\forall p\in Q\,(p\not\Vdash m<\dot n)$.   
\end{lemma}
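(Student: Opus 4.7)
The plan is to prove both implications by contrapositive, exploiting a uniform translation between sequences in $Q^\omega$ and names for natural numbers.

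For the forward direction, I would assume $Q$ is Fr-linked and suppose, toward a contradiction, that some $\Por$-name $\dot n$ of a natural number fails the conclusion: for every $m<\omega$ there is $p_m\in Q$ with $p_m\Vdash m<\dot n$. The sequence $\bar p=\la p_m:m<\omega\ra\in Q^\omega$ can then be fed into the definition of Fr-linkedness to produce some $q\in\Por$ forcing $\dot A:=\{m<\omega:p_m\in\dot G\}$ to be infinite. Below $q$, every $m\in\dot A$ satisfies $p_m\in\dot G$, hence $m<\dot n$; since $\dot A$ is infinite, this forces $\dot n$ to exceed every standard natural number, contradicting $\Vdash\dot n\in\omega$.

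For the backward direction, I would argue the contrapositive: assuming $Q$ is not Fr-linked, I construct a name $\dot n$ witnessing the failure of the right-hand side. Let $\bar p=\la p_n:n<\omega\ra\in Q^\omega$ be such that no $q\in\Por$ forces $\dot A:=\{n<\omega:p_n\in\dot G\}$ to be infinite. By the basic forcing fact that a statement unforced at $q$ has some extension forcing its negation, the set $D:=\{r\in\Por:r\Vdash\text{``$\dot A$ is finite''}\}$ is dense in $\Por$. Consequently, in any generic extension $V[G]$ the filter $G$ meets $D$, so $\dot A[G]$ is finite. Define the name $\dot n$ by $\dot n[G]:=1+\max \dot A[G]$ when $\dot A[G]\neq\emptyset$ and $\dot n[G]:=0$ otherwise; then $\Vdash\dot n\in\omega$. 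For any $m<\omega$, the condition $p_m\in Q$ forces $p_m\in\dot G$, hence $m\in\dot A$, hence $\max\dot A\geq m$, and therefore $p_m\Vdash m<\dot n$, so the integer $m$ required by the right-hand side cannot exist.

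The only delicate point is ensuring that the $\dot n$ constructed in the backward direction is genuinely a name for a natural number, since the hypothesis only applies to such names. This is exactly what the density of $D$ buys: it rules out the ``$\dot A$ infinite'' case in every generic extension, so the maximum is always well defined and finite. Beyond this, the argument is a direct unraveling of the two definitions, and crucially no extension of the $p_m$ is required to achieve $p_m\Vdash m<\dot n$ — membership $p_m\in Q$ is preserved because $p_m$ itself already forces $m\in\dot A$.
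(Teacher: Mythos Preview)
Your proof is correct and follows essentially the same approach as the paper's: both directions are argued contrapositively by translating between a witnessing sequence $\langle p_m\rangle\in Q^\omega$ and a name $\dot n$ bounding the set $\{m:p_m\in\dot G\}$. Your presentation is in fact a bit cleaner---you make explicit the density argument guaranteeing that $\dot n$ is a genuine name for a natural number, which the paper leaves implicit when it jumps from ``no $q$ forces $\dot A$ infinite'' to ``$\Vdash\dot A$ finite''.
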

\begin{proof}
The direction from left into right is due to Mej\'ia~\cite[Lem~3.26]{mejiavert}. Assume that, for any $n\in\omega$, there is some $p_n \in Q$ such that $p_n\Vdash m\leq\dot n$
Then, if $G$ is $\Por$-generic over $V$, then $\set{n\in\omega}{p_n\in\dot G}$ must be finite because $p_n\in\dot G \Rightarrow n \leq \dot m[G]<\omega$. Therefore, in $V$, $Q$ cannot be Fr-linked.

On the other hand, assume that $Q$ is not Fr-linked, accordingly pick some sequence $\seq{p_{n}}{n < \omega}$ in $Q$ such that $\Vdash``\set{n\in\omega}{p_n\in \dot G}$ finite", so there is a $\Por$-name $\dot n$ of a natural number such that $\Vdash\set{n\in\omega}{p_n\in \dot G}\subseteq\dot n$. Towards a contradiction suppose that $\exists m\,\forall q\in Q\colon q\not\Vdash m<\dot n$. Choose $m\in\omega$ such that $\forall q\in Q\colon q\not\Vdash m<\dot n$. For any $n\in\omega$, $p_n\not\Vdash m<\dot n $. Next, find 
$r\leq p_m$ such that $r\Vdash m\geq\dot n$, on the other hand, since  $r\Vdash p_m\in G$, $r\Vdash m<\dot n$. This is a contradiction.
\end{proof}

\begin{lemma}[{\cite{Mejiacurse}}]\label{bassoftII}
Let $\Por$ be a poset and $Q$ be an $\Fr$-linked subset of $\Por$.  If $\dot y$ is a $\Por$-name of a member of $\baire$, then there is $y'\in\baire$ (in the ground model) such that, for any $x\in\baire$ \[x\not\leq^* y'\Rightarrow\forall n\in\omega\,\forall p\in Q\colon p\not\Vdash\forall m\geq n\colon x(m)\leq\dot y(m).\]
\end{lemma}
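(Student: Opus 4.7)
The plan is to apply \autoref{bassoft} coordinate-by-coordinate to $\dot y$ and define $y'$ as the resulting function. Since $Q$ is $\Fr$-linked and, for each $k\in\omega$, $\dot y(k)$ is a $\Por$-name of a natural number, \autoref{bassoft} gives some $y'(k)<\omega$ such that
\[\forall p\in Q\colon p\not\Vdash y'(k)<\dot y(k).\]
Let $y'\colon \omega\to\omega$ be the function $k\mapsto y'(k)$; clearly $y'\in\baire\cap V$.

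Next I would check that $y'$ has the required property. Fix $x\in\baire$ with $x\not\leq^* y'$, so the set $A:=\set{m<\omega}{x(m)>y'(m)}$ is infinite. Toward a contradiction, suppose there exist $n\in\omega$ and $p\in Q$ with
\[p\Vdash\forall m\geq n\colon x(m)\leq\dot y(m).\]
Since $A$ is infinite, pick $m\in A$ with $m\geq n$. Then $p$ forces $x(m)\leq\dot y(m)$, while by choice of $m$ we have $y'(m)<x(m)$ in the ground model. Combining, $p\Vdash y'(m)<\dot y(m)$, contradicting the defining property of $y'(m)$. This yields the desired implication.

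The argument is essentially bookkeeping once \autoref{bassoft} is in hand; there is no serious obstacle. The only subtle point is to read \autoref{bassoft} in the correct direction: one uses the nontrivial implication (Fr-linkedness implies the existence of a bound $m$ not strictly forced below $\dot n$ by any condition of $Q$) applied pointwise to obtain $y'$, and then one exploits the infinitude of witnesses to $x\not\leq^* y'$ to turn an eventual domination statement forced by some $p\in Q$ into a local violation of that bound. Nothing further is needed, so the proof is complete.
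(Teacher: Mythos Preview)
Your proof is correct and follows essentially the same approach as the paper: apply \autoref{bassoft} coordinatewise to build $y'$, then derive a contradiction from any putative $p\in Q$ forcing eventual domination by choosing $m\geq n$ with $x(m)>y'(m)$. Your final step is in fact slightly more direct than the paper's (you conclude $p\Vdash y'(m)<\dot y(m)$ immediately, whereas the paper passes to a $q\leq p$ forcing $y'(m)\geq\dot y(m)$), but the argument is the same in substance.
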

\begin{proof}
By applying~\autoref{bassoft}, for each $m\in\omega$ find $y'\in\baire$ such that, for each $m<\omega$ no member of $Q$ forces $y'(m)<\dot y(m)$. 

Now suppose that $x\in\baire$ and $x\not\leq^* y'$. Towards a contradiction  assume that there is a $n\in\omega$ and $p\in Q$ so that \[p\Vdash\forall m\geq n\colon x(m)\leq\dot y(m).\]
Choose $m\geq n$ so that $x(m)>y'(m)$. On the other hand, $p\not\Vdash y'(m)<\dot y(m)$, so there is some $q\leq p$ such that $q\Vdash y'(m)\geq\dot y(m)$. Then 
\[q\Vdash \dot y(m)\geq x(m)>y'(m)\geq\dot y(m),\]
which is a contradiction.
\end{proof}
 
Using the earlier results about Fr-linkedness, we infer that $\Por_\pi$ forces $\bfrak=\aleph_1$. So we are done with the proof of~\autoref{add<non=cov}.

We now provide another proof for the consistency of $\add(\Ewf)<\non(\Ewf)=\cov(\Ewf).$ 

\begin{theorem}[The author and Mej\'ia]\label{thm:Eventit}
Assume $\aleph_1\leq\kappa\leq\lambda=\lambda^{\aleph_0}$ with $\kappa$ regular. Let $\Eor_{\pi}$ be a FS iteration of eventually different real forcing $\Eor$ of length $\pi=\lambda\kappa$. Then, in $V^{\Eor_{\pi}}$, $\cov(\Nwf)=\bfrak=\aleph_1$, $\non(\Ewf)=\cov(\Ewf)=\kappa$ and $\non(\Nwf)=\dfrak=\cfrak=\lambda$  (see \autoref{fig:cichonEventdiff}).
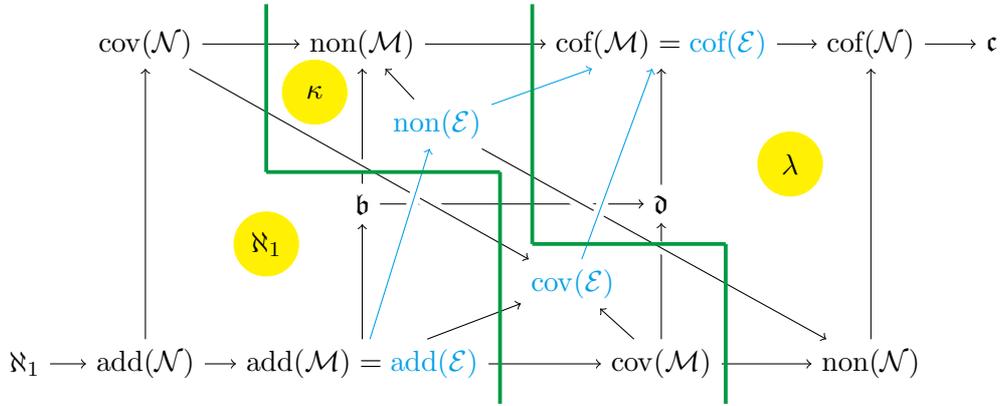
\begin{figure}[ht]
\centering
\begin{tikzpicture}[scale=1.06]
\small{
\node (aleph1) at (-1,3) {$\aleph_1$};
\node (addn) at (0.5,3){$\add(\Nwf)$};
\node (covn) at (0.5,7){$\cov(\Nwf)$};
\node (nonn) at (9.5,3) {$\non(\Nwf)$} ;
\node (cfn) at (9.5,7) {$\cof(\Nwf)$} ;
\node (addm) at (3.19,3) {$\add(\Mwf)=\subiii{\add(\Ewf)}$} ;
\node (covm) at (6.9,3) {$\cov(\Mwf)$} ;
\node (nonm) at (3.19,7) {$\non(\Mwf)$} ;
\node (cfm) at (6.9,7) {$\cof(\Mwf)=\subiii{\cof(\Ewf)}$} ;
\node (b) at (3.19,5) {$\bfrak$};
\node (d) at (6.9,5) {$\dfrak$};
\node (c) at (11,7) {$\cfrak$};
\node (none) at (4.12,6) {\subiii{$\non(\Ewf)$}};
\node (cove) at (5.8,4) {\subiii{$\cov(\Ewf)$}};
\draw (aleph1) edge[->] (addn)
      (addn) edge[->] (covn)
      (covn) edge [->] (nonm)
      (nonm)edge [->] (cfm)
      (cfm)edge [->] (cfn)
      (cfn) edge[->] (c);

\draw
   (addn) edge [->]  (addm)
   (addm) edge [->]  (covm)
   (covm) edge [->]  (nonn)
   (nonn) edge [->]  (cfn);
\draw (addm) edge [->] (b)
      (b)  edge [->] (nonm);
\draw (covm) edge [->] (d)
      (d)  edge[->] (cfm);
\draw (b) edge [->] (d);

\draw   (none) edge [->] (nonm)
        (none) edge [sub3,->] (cfm)
        (addm) edge [->] (cove);
      
\draw (none) edge [line width=.15cm,white,-] (nonn)
      (none) edge [->] (nonn);
      
\draw (cove) edge [line width=.15cm,white,-] (covn)
      (cove) edge [<-] (covm)
      (cove) edge [<-] (covn);

\draw (addm) edge [line width=.15cm,white,-] (none)
      (addm) edge [sub3,->] (none); 

\draw (cove) edge [line width=.15cm,white,-] (cfm)
      (cove) edge [sub3,->] (cfm);
\draw[color=sug,line width=.05cm] (2,7.5)--(2,5.4); 
\draw[color=sug,line width=.05cm] (2,5.4)--(4.9,5.4);
\draw[color=sug,line width=.05cm] (4.9,5.4)--(4.9,2.5);
\draw[color=sug,line width=.05cm] (5.3,7.5)--(5.3,4.5); 
\draw[color=sug,line width=.05cm] (5.3,4.5)--(7.7,4.5); 
\draw[color=sug,line width=.05cm] (7.7,4.5)--(7.7,2.5);

\draw[circle, fill=yellow,color=yellow] (2,4.5) circle (0.4);
\draw[circle, fill=yellow,color=yellow] (2.6,6.4) circle (0.4);
\draw[circle, fill=yellow,color=yellow] (8.5,5.5) circle (0.4);
\node at (2,4.5) {$\aleph_1$};
\node at (2.6,6.4) {$\kappa$};
\node at (8.5,5.5) {$\lambda$};
}
\end{tikzpicture}
\caption{Cichon's diagram after adding $\kappa$-many eventually different reals with $\Eor$.}\label{fig:cichonEventdiff}
\end{figure}
\end{theorem}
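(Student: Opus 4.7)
The plan is to mirror the proof of Lemma~\ref{add<non=cov}, replacing the $(b,h)$-localization forcing by $\Eor$. The conclusions split into three bundles: the small values $\cov(\Nwf)=\bfrak=\aleph_1$, the large values $\non(\Nwf)=\dfrak=\cfrak=\lambda$, and the middle values $\non(\Ewf)=\cov(\Ewf)=\kappa$. The small and large bundles are either classical or follow from standard FS-iteration bookkeeping; the middle bundle is the heart of the theorem.

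For $\bfrak=\aleph_1$, I would show that $\Eor$ is $\sigma$-$\Fr$-linked, using its finite stems as the linked pieces and stabilising the forbidden-function components of $\Eor$-conditions through an ultrafilter, exactly as in Lemma~\ref{mejiavertLOC}; Lemma~\ref{bassoftII} then preserves a ground-model unbounded family throughout the iteration. For $\cov(\Nwf)=\aleph_1$ one invokes the classical fact that $\Eor$ adds no random reals and combines it with the preservation scheme behind Theorem~\ref{thm:pnon(E)} so that a CH-witness for $\cov(\Nwf)$ in $V$ survives into $V^{\Eor_\pi}$. The large values are standard for FS iterations of ccc forcings of length $\lambda\kappa$ with $\lambda^{\aleph_0}=\lambda$: the limit stages of countable cofinality behave as Cohen reals and push $\non(\Nwf)$ and $\dfrak$ up to $\lambda$, while $\cfrak=\lambda$ is a nice-name count.

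The middle bundle is where the eventually different reals are essential. The inequalities $\non(\Ewf)\le\non(\Mwf)=\kappa$ and $\cov(\Ewf)\ge\cov(\Mwf)=\kappa$ come for free from Theorem~\ref{ThmZFC:E} together with the well-known $\non(\Mwf)=\cov(\Mwf)=\kappa$ forced by $\Eor_\pi$. The two remaining inequalities, $\non(\Ewf)\ge\kappa$ and $\cov(\Ewf)\le\kappa$, should be routed through the slalom description of $\Ewf$: use Lemma~\ref{lem:combE} to replace any $F_\sigma$ null set by some $H_{\tilb,\varphi}$, and Corollary~\ref{cor:toolE} to manufacture members of $\Ewf$ from $\Swfw_{\tilb}$-slaloms extracted from the eventually different generics. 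Concretely, $\cov(\Ewf)\le\kappa$ would be witnessed by a family of $\kappa$ such slaloms coming from the first $\kappa$ generics, and $\non(\Ewf)\ge\kappa$ by the observation that a later eventually different generic is not trapped by any $H_{\tilb,\varphi}$ from a bounded stage.

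The main obstacle is the slalom extraction: for a fixed increasing $\tilb\in V\cap\baire$, one must show that the generic $\dot e$ canonically induces some $\varphi\in\Swfw_{\tilb}$ whose associated $H_{\tilb,\varphi}$ swallows all reals from the relevant intermediate model. This amounts to a density argument inside $\Eor$ that converts the eventual-difference property of $\dot e$ into the $\liminf$ bound required by $\Swfw_{\tilb}$, and the fiddly point is arranging the block sizes $\tilb(n+1)-\tilb(n)$ to absorb the forbidden-function components of $\Eor$-conditions. I would expect the argument to cite or adapt the matrix-iteration preservation lemma already invoked in the proof of Lemma~\ref{add<non=cov}.
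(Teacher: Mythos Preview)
Your handling of the small and large bundles matches the paper (which simply cites these as known, together with $\non(\Mwf)=\cov(\Mwf)=\kappa$, and notes that $\Eor$ is $\sigma$-$\Fr$-linked by \cite[Lem.~3.29]{mejiavert}). For the middle bundle, however, the paper takes a markedly more direct route than the one you outline.

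Instead of extracting slaloms and routing through the $H_{\tilb,\varphi}$ machinery on $2^\omega$, the paper works on $\baire$ and shows in one stroke that
\[
\Vdash_{\Eor}\ \baire\cap V\ \subseteq\ \set{x\in\baire}{\forall^\infty n\colon x(n)\neq\dot e_\gen(n)}\ \in\ \Ewf.
\]
The set on the right is visibly $F_\sigma$; the point is that it has measure zero, and this is reduced to the statement $\Vdash_{\Eor}\sum_{n}2^{-(\dot e_\gen(n)+1)}=\infty$, verified by a short density argument: given $(s,\varphi)\in\Eor$ with $\sup_i|\varphi(i)|=M$, extend $s$ by a block of length $>2^{M+1}$ with values $\leq M$, forcing that block to contribute more than $1$ to the partial sum. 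Thus each step of the iteration places all previous reals into a single member of $\Ewf$, and the usual cofinality-$\kappa$ bookkeeping gives $\non(\Ewf)\geq\kappa$ and $\cov(\Ewf)\leq\kappa$; the reverse inequalities come from $\non(\Mwf)=\cov(\Mwf)=\kappa$ as you say.

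What you flag as the ``main obstacle'' --- manufacturing a $\varphi\in\Swfw_{\tilb}$ from $\dot e_\gen$ for a \emph{fixed} ground-model $\tilb$ --- is therefore bypassed entirely: no slalom extraction, no Lemma~\ref{lem:combE} or Corollary~\ref{cor:toolE}, no block-size juggling. Your route is not wrong in spirit, but as written it leaves exactly the hard step open, and the paper's argument shows that this detour is unnecessary.
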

\begin{proof}
It is well-known that $\Eor_{\pi}$ forces $\cov(\Nwf)=\bfrak=\aleph_1$, $\non(\Mwf)=\cov(\Mwf)=\kappa$ and $\non(\Nwf)=\dfrak=\cfrak=\lambda$. (see e.g.~\cite[Sec.~3.1]{CM22}). Moreover, we know that $\Eor$ is Fr-linked by~\cite[Lem.~3.29]{mejiavert}. It remains to prove that $\Eor_\pi$ forces $\non(\Ewf)=\cov(\Ewf)=\kappa$. For this, it suffices to see that $\Eor$ forces all ground model reals belongs to $\Ewf$.

Think of $\Eor$ as \[\Eor=\largeset{(s,\varphi)}{s\in\omega^{<\omega}\textrm{\ and\ }\varphi\in\bigcup_{m\in\omega}\Swf(\omega,m)}\]
Ordered by $(s',\varphi')\leq(s,\varphi)$ iff $s\subseteq s'$ and $\varphi(n)\subseteq\varphi'(n)$ for all $n\in\omega$, and $s'(n)\not\in\varphi(n)$ for any $n\in|s'|\smallsetminus|s|$. 
Let $G$ be a $\Eor$-generic filter over $V$. In $V[G]$, define
\[e_\gen=\bigcup\set{s}{\exists\varphi\colon (s,\varphi)\in G}.\]
Notice that for any $x\in\baire\cap V$, $\forall^\infty n\in\omega\colon x(n)\neq e_\gen(n)$.
\begin{clm}\label{fctE}
$\Vdash_\Eor\baire\cap V\subseteq\set{x\in\baire}{\forall^\infty n\in\omega\colon x(n)\neq\dot e_\gen(n)}\in\Ewf$.    
\end{clm}
\begin{proof}
    It is clear that $\set{x\in\baire}{\forall^\infty n\in\omega\colon x(n)\neq\dot e_\gen(n)}=\bigcup_{m\in\omega}\bigcap_{n\geq m}\set{x\in\baire}{ x(n)\neq\dot e_\gen(n)}$ is an $F_\sigma$ set and $\Vdash_\Eor\baire\cap V\subseteq\set{x\in\baire}{\forall^\infty n\in\omega\colon x(n)\neq\dot e_\gen(n)}$, so it remains to prove that in $V[e]$, $\Lb_\omega(\set{x\in\baire}{\forall^\infty n\in\omega\colon x(n)\neq e_\gen(n)})=0$ (by $\Lb_\omega$ we denote Lebesgue measure zero on $\baire$). Indeed, note that 

$\Lb_\omega(\set{x\in\baire}{\forall^\infty n\in\omega\colon x(n)\neq e_\gen(n)})=\lim_{n\to\infty}\prod_{m\geq n}\bigg(1-\frac{1}{2^{e_\gen(m)+1}}\bigg)$ and 
\[
 \prod_{m\geq n}\bigg(1-\frac{1}{2^{e_\gen(m)+1}}\bigg)\leq\prod_{m\geq n}\mathbf{e}^{-\frac{1}{2^{e_\gen(m)+1}}}
 \leq \mathbf{e}^{-\Sigma_{m\geq n}\frac{1}{2^{e_\gen(m)+1}}}.
\]
To conclude the proof, let us argue 
\begin{equation}\label{clmE}
\Vdash_\Eor\Sigma_{n\in\omega}\frac{1}{2^{\dot e_\gen(n)+1}}=\infty.
\tag{\faSubway}
\end{equation}
For this, it suffices to see that \[\forall n\,\forall p\in\Eor\,\exists q\leq p\,\exists m\geq n\,\exists m'>m\colon q\Vdash\Sigma_{i=m}^{m'+1}\frac{1}{2^{\dot e_\gen(i)+1}}>1.\]
Let $n$ and $p=(s,\varphi)\in\Eor$, with $M:=\sup\set{|\varphi(i)|}{i<\omega}$. Wlog assume that $|s|\geq n$. Extend $p$ to $q=(t,\varphi)$  such that $t\supseteq s$ and $\forall i\in |t|\smallsetminus|s|\colon t(i)\leq M$ and $|t|-|s|>2^{M+1}$. Put $m:=|s|$ and $m':=|t|$. Then it can be proved that $q$ is as desired.

Utilizing~(\ref{clmE}), we conclude that in $V[e]$, $\Lb_\omega(\set{x\in\baire}{\forall^\infty n\in\omega\colon x(n)\neq e_\gen(n)})=0$.
\end{proof}
By~employing~\autoref{fctE}, it can be proved that $\Eor_\pi$ forces $\non(\Ewf)=\cov(\Ewf)=\kappa$.
\end{proof}

We will conclude this section by presenting an additional concept related to not adding dominating reals, which indeed matches Fr-linked.

Brendle and Judah~\cite{BrJ} presented the property that helps us not add dominating reals. Given a partial order $\Por$, a function $h\colon\Por\to\omega$ is a \emph{height function} iff $q\leq p$ implies $h(q)\geq h(p)$. A pair $\la\Por,h\ra$ fulfills the property~(\ref{BrJprop}) iff:
\begin{equation}
 \parbox{0.8\textwidth}{
 given a maximal antichain $\set{p_n}{n\in\omega}\subseteq\Por$ and $m\in\omega$, there is an $n\in\omega$ such that: whenever $p$ is incompatible with $\set{p_j}{j\in n}$ then $h(p)>m$.  
}
\tag{\faBasketballBall}
\label{BrJprop}
\end{equation}
They proved that FS iterations of ccc posets with the property~(\ref{BrJprop}) do not add dominating reals. Inspired by~(\ref{BrJprop}), in~\cite{Carsoft}, the author introduced the following linkedness property:

\begin{definition}[{\cite[Def.~3.1]{Carsoft}}]\label{maindef}
  Let $\Por$ be a poset. A set $Q\subseteq\Por$ is \emph{$\mathrm{leaf}$-linked} if, for any maximal antichain $\set{p_n}{n\in\omega}\subseteq\Por$, there is some $n\in\omega$ such that $\forall p \in Q\,\exists j<n\colon p\parallel p_j.$

  For an infinite cardinal $\theta$. Say that $\Por$ is \emph{$\theta$-$\mathrm{leaf}$-linked} if $\Por = \bigcup_{\alpha<\theta}Q_\alpha$ where $Q_\alpha$ is $\mathrm{leaf}$-linked ($\alpha<\mu$). 
\end{definition}

Indeed, Fr-linked and leaf-linked coincide for ccc posets. 

\begin{lemma}[{\cite[Lem.~3.6]{Carsoft}}]\label{thm:a1}
Let $\Por$ be a ccc poset and a set $Q\subseteq\Por$. The following statements are equivalent:
\begin{enumerate}[label=\rm (\arabic*)]
    \item\label{Fr} $Q$ is $\Fr$-linked.
    
    \item\label{Soft} $Q$ is $\mathrm{leaf}$-linked.

    \item\label{ConFr} for each $\Por$-name $\dot n$ of a natural number there is some $m<\omega$ such that $\forall p\in Q\colon p\not\Vdash m<\dot n$. 
\end{enumerate}
\end{lemma}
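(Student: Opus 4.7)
The plan is to prove the triangle of implications by using \autoref{bassoft} (which is attributed in the text to the author and Mej\'ia) to handle \ref{Fr} $\Leftrightarrow$ \ref{ConFr}, and then establishing \ref{Soft} $\Leftrightarrow$ \ref{ConFr} directly. The ccc hypothesis enters only through the fact that every maximal antichain in $\Por$ is countable, which lets us translate between names for natural numbers and enumerated maximal antichains.

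For \ref{ConFr} $\Rightarrow$ \ref{Soft}, suppose $A=\set{p_n}{n<\omega}$ is a maximal antichain in $\Por$. Define a $\Por$-name $\dot n$ for the unique $j<\omega$ such that $p_j\in\dot G$ (well-defined by maximality and incompatibility of $A$). The key observation is that, for $p\in\Por$ and $k<\omega$, we have $p\Vdash k<\dot n$ iff $p\perp p_j$ for all $j\leq k$; equivalently, $p\not\Vdash k<\dot n$ iff $p$ is compatible with some $p_j$ with $j\leq k$. Applying \ref{ConFr} to $\dot n$ yields $m<\omega$ such that no $p\in Q$ forces $m<\dot n$, so every $p\in Q$ is compatible with some $p_j$ with $j\leq m$. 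This is exactly the witness $n=m+1$ required for \ref{Soft}.

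For \ref{Soft} $\Rightarrow$ \ref{ConFr}, let $\dot n$ be a $\Por$-name of a natural number. The set $D:=\set{p\in\Por}{\exists k<\omega\colon p\Vdash\dot n=k}$ is dense in $\Por$, so any maximal antichain $A\subseteq D$ is maximal in $\Por$. By ccc, $A$ is countable, so enumerate $A=\set{q_i}{i<\omega}$ with $q_i\Vdash\dot n=k_i$ for some $k_i<\omega$. By \ref{Soft}, there is $n<\omega$ such that every $p\in Q$ is compatible with some $q_i$ with $i<n$. Set $m:=\max\set{k_i}{i<n}$. Then for any $p\in Q$, picking $i<n$ with $p\parallel q_i$ and any $r\leq p, q_i$, we have $r\Vdash \dot n=k_i\leq m$, so $p\not\Vdash m<\dot n$. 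This verifies \ref{ConFr}.

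Finally, \ref{Fr} $\Leftrightarrow$ \ref{ConFr} is a direct quotation of \autoref{bassoft}, closing the cycle. The main (but minor) obstacle is simply getting the direction of the inequality right when translating between ``$p$ is compatible with some $p_j$, $j\leq m$'' and ``$p\not\Vdash m<\dot n$''; once that correspondence is set up correctly, both implications are short. The ccc hypothesis is used only to guarantee that the antichain $A\subseteq D$ constructed in \ref{Soft} $\Rightarrow$ \ref{ConFr} can be enumerated as an $\omega$-sequence, which is what the leaf-linkedness definition requires as input.
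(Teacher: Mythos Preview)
Your proof is correct. The approach is close to the paper's but organized differently: the paper proves the cycle $\ref{Fr}\Rightarrow\ref{Soft}\Rightarrow\ref{ConFr}\Rightarrow\ref{Fr}$, establishing $\ref{Fr}\Rightarrow\ref{Soft}$ directly from the definition of $\Fr$-linkedness (by contradiction, picking a sequence $q_m\in Q$ incompatible with $\{p_j:j<m\}$ and using a condition that forces infinitely many $q_m$ into $\dot G$ to contradict maximality of the antichain), and then proving $\ref{Soft}\Rightarrow\ref{ConFr}$ by contradiction as well. You instead close the equivalence through \ref{ConFr}: you quote \autoref{bassoft} for $\ref{Fr}\Leftrightarrow\ref{ConFr}$ and prove $\ref{Soft}\Leftrightarrow\ref{ConFr}$ directly. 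Your $\ref{ConFr}\Rightarrow\ref{Soft}$ via the ``index name'' $\dot n$ (the unique $j$ with $p_j\in\dot G$) is a clean constructive argument that does not appear in the paper, and your $\ref{Soft}\Rightarrow\ref{ConFr}$ is the direct (non-contradiction) version of the paper's argument. Both routes use ccc in exactly the same place --- to enumerate the maximal antichain deciding $\dot n$ --- so neither buys more generality; your version is slightly more streamlined in avoiding two proofs by contradiction.
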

\begin{proof}
$\ref{Fr}\Rightarrow\ref{Soft}$. Let $\set{p_n}{n\in\omega}$ be a maximal antichain in $\Por$. Towards a contradiction assume that $\forall m\,\exists q_m\in Q\,\forall n<m\colon q_m\perp p_n$. By $\Fr$-linkedness, choose $q\in\Por$ such that $q\Vdash``\exists^\infty m\in\omega\colon q_m\in\dot G"$. Since each $q_m$ fulfills $\forall n<m\colon q_m\perp p_n$, $q\Vdash\exists^\infty m\,\forall n<m\colon p_n\not\in\dot G$. Hence, $q\Vdash\forall n<\omega\colon p_n\not\in\dot G$. On the other hand, since $\set{p_n}{n\in\omega}$ is a maximal antichain, by using~\autoref{basfor}, $\Vdash \exists n\in\omega\colon p_n\in\dot G$, which reaches a contradiction.

$\ref{Soft}\Rightarrow\ref{ConFr}$. Assume that $\dot n$ is a $\Por$-name of a natural number and assume towards a contradiction that for each $m\in\omega$, there is some $q_m\in Q$ that forces $m<\dot n$. Let $\set{p_k}{k\in\omega}$ be a maximal antichain deciding the value $\dot n$ and let $n_k$ be such that $p_k\Vdash``\dot n=n_k"$. By~soft-linkedness, choose $m\in\omega$ such that $\forall p\in Q\,\exists k<m\colon p\parallel p_k$. Next, let $k^\star$ be larger than $\max\set{n_k}{k<m}$. Then $p_{k^\star}\Vdash k^\star<\dot n$ and since $q^\star\in Q$, there is $k<m$ such that $p_{k^\star}\parallel p_k$. Let  $r\leq q_{k^\star}, p_k$. Then $r\Vdash \dot n=k<k^\star<\dot n$, which is a contradiction.

$\ref{ConFr}\Rightarrow\ref{Fr}$. This implication follows by~\autoref{bassoft}.
\end{proof}

As an immediate consequence, we infer:

\begin{corollary}
If $\Por$ is ccc then any subset of $\Por$ is leaf-linked iff it is Fr-linked.    
\end{corollary}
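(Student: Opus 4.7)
The plan is straightforward: this corollary is literally the restriction of the preceding Lemma~\ref{thm:a1} to the equivalence \ref{Fr} $\Leftrightarrow$ \ref{Soft}. Once Lemma~\ref{thm:a1} is in place, there is nothing further to prove; one just reads off the two implications from that lemma applied to an arbitrary subset $Q\subseteq\Por$ of a ccc poset $\Por$.

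If I had to prove the corollary from scratch without quoting Lemma~\ref{thm:a1}, I would run the same three-step cycle used there. For Fr-linked $\Rightarrow$ leaf-linked, I would argue by contradiction: given a maximal antichain $\{p_n:n\in\omega\}$, if no finite initial segment is compatible with every element of $Q$, I would pick witnesses $q_m\in Q$ with $q_m\perp p_0,\ldots,p_{m-1}$, use Fr-linkedness to find $q$ forcing infinitely many $q_m\in\dot G$, and note that this forces every $p_n$ out of $\dot G$, contradicting maximality via Fact~\ref{basfor}. For the converse, rather than proving leaf-linked $\Rightarrow$ Fr-linked directly, I would route through the name condition~\ref{ConFr}: assume leaf-linkedness and, given a name $\dot n$ for a natural number, take a maximal antichain $\{p_k:k\in\omega\}$ deciding $\dot n$ with values $n_k$ (this is exactly where ccc enters, since it guarantees such an antichain exists and is countable). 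Leaf-linkedness then yields $m$ such that every $q\in Q$ is compatible with some $p_k$ with $k<m$; setting $\bar m:=\max\{n_k:k<m\}+1$, no element of $Q$ can force $\bar m<\dot n$. Finally, apply Lemma~\ref{bassoft} to conclude Fr-linkedness.

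The main thing to emphasize is the role of the ccc hypothesis: it is needed precisely so that one can select, for any name of a natural number, a countable maximal antichain deciding its value, and so that any maximal antichain is countable (which is implicit in the leaf-linked formulation via the index set $\omega$). I do not foresee any genuine obstacle, as every ingredient is already developed in the excerpt; the corollary is essentially a packaging statement extracted from Lemma~\ref{thm:a1}.
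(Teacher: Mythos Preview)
Your proposal is correct and matches the paper's approach exactly: the paper states this corollary as an immediate consequence of Lemma~\ref{thm:a1} with no further proof, and you have correctly identified it as simply the equivalence \ref{Fr}~$\Leftrightarrow$~\ref{Soft} read off from that lemma. Your from-scratch sketch also faithfully reproduces the cycle $\ref{Fr}\Rightarrow\ref{Soft}\Rightarrow\ref{ConFr}\Rightarrow\ref{Fr}$ used in the proof of Lemma~\ref{thm:a1}.
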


Next, we establish some consequences of our leaf-linked property:

\begin{lemma}[{\cite[Lem.~3.9]{Carsoft}}]\label{solft:one}
Let $\Por$ be a poset and $\theta$ be a cardinal number. Consider the following properties of $\Pbb$:
\begin{enumerate}[label=\rm (\arabic*)]
\item \label{solft:onea}
$\Pbb$ is $\theta$-leaf-linked.
\item\label{solft:oneb}
There is a~dense set $Q\subseteq\Pbb$ and a 
function $h\colon Q\to\theta$ such that, for every $\alpha<\theta$, the set\/ $\set{p\in Q}{ h(p)=\alpha}$ is leaf-linked.

\item \label{solft:onec} There is a~dense set $Q\subseteq\Pbb$ and a
function $h\colon Q\to\theta$ such that, for every $\alpha<\theta$, the set\/ $\set{p\in Q}{ h(p)\le\alpha}$ is leaf-linked.
\end{enumerate}
Then $\ref{solft:onec}\Rightarrow\ref{solft:oneb}$; $\ref{solft:onea}\Rightarrow\ref{solft:oneb}$; $\ref{solft:onec}\Rightarrow\ref{solft:onea}$; and
$\ref{solft:onea}\Leftrightarrow\ref{solft:oneb}\Leftrightarrow\ref{solft:onec}$ for $\theta=\omega$.
\end{lemma}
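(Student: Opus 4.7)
My plan is to use two elementary closure properties of leaf-linkedness: it descends to subsets, and (for the $\theta=\omega$ case) it is preserved by finite unions. Specifically, if $Q$ is leaf-linked and $Q'\subseteq Q$, then for any maximal antichain $\{p_n:n\in\omega\}$ the witness $n$ for $Q$ also works for $Q'$; and if $Q_0,\dots,Q_k$ are leaf-linked with respective witnesses $n_0,\dots,n_k$, then $\max\{n_i:i\le k\}$ witnesses leaf-linkedness of $\bigcup_{i\le k}Q_i$.

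Given these, the implications \ref{solft:onec}$\Rightarrow$\ref{solft:oneb} and \ref{solft:onea}$\Rightarrow$\ref{solft:oneb} are almost immediate. For \ref{solft:onec}$\Rightarrow$\ref{solft:oneb}: since $\{p\in Q:h(p)=\alpha\}\subseteq\{p\in Q:h(p)\le\alpha\}$, leaf-linkedness is inherited. For \ref{solft:onea}$\Rightarrow$\ref{solft:oneb}: take $Q:=\Pbb$ and $h(p):=\min\{\alpha<\theta:p\in Q_\alpha\}$, where $\Pbb=\bigcup_{\alpha<\theta}Q_\alpha$ witnesses \ref{solft:onea}; then $\{p:h(p)=\alpha\}\subseteq Q_\alpha$ is leaf-linked.

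The one implication that actually uses the density assumption is \ref{solft:onec}$\Rightarrow$\ref{solft:onea}, and it is the only mildly delicate step. Given $Q\subseteq\Pbb$ dense and $h\colon Q\to\theta$ as in \ref{solft:onec}, fix for each $p\in\Pbb$ a condition $q_p\in Q$ with $q_p\le p$ (using the axiom of choice), and set
\[
R_\alpha:=\set{p\in\Pbb}{h(q_p)\le\alpha}.
\]
Since $h(q_p)<\theta$ for every $p$, we have $\Pbb=\bigcup_{\alpha<\theta}R_\alpha$. To see $R_\alpha$ is leaf-linked, let $\{p_n:n\in\omega\}$ be a maximal antichain and pick $n$ witnessing leaf-linkedness of $\{q\in Q:h(q)\le\alpha\}$. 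If $p\in R_\alpha$ then $q_p$ belongs to that set, so $q_p\parallel p_j$ for some $j<n$; hence $p\parallel p_j$, because by transitivity any common extension of $q_p$ and $p_j$ also extends $p$.

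Finally, for $\theta=\omega$ it remains to prove \ref{solft:oneb}$\Rightarrow$\ref{solft:onec}, which follows from the finite-union closure property noted at the outset: $\{p\in Q:h(p)\le n\}=\bigcup_{m\le n}\{p\in Q:h(p)=m\}$ is a finite union of leaf-linked sets and so is itself leaf-linked. Combined with the implications proved for arbitrary $\theta$, this gives \ref{solft:onea}$\Leftrightarrow$\ref{solft:oneb}$\Leftrightarrow$\ref{solft:onec} in the countable case. The main (and only) real content is the compatibility-lifting argument in \ref{solft:onec}$\Rightarrow$\ref{solft:onea}; the rest is bookkeeping with subsets and finite unions of leaf-linked sets.
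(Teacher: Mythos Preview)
Your proof is correct and follows essentially the same strategy as the paper: subsets of leaf-linked sets are leaf-linked, finite unions of leaf-linked sets are leaf-linked, and one uses $h(p)=\min\{\alpha:p\in Q_\alpha\}$ for \ref{solft:onea}$\Rightarrow$\ref{solft:oneb}.

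There is one point where you are in fact \emph{more} careful than the paper. For \ref{solft:onec}$\Rightarrow$\ref{solft:onea} the paper simply notes that the sets $Q_\alpha=\{p\in Q:h(p)\le\alpha\}$ are leaf-linked and that $Q=\bigcup_{\alpha<\theta}Q_\alpha$ is dense; but by \autoref{maindef} one needs $\Pbb=\bigcup_{\alpha<\theta}R_\alpha$, not merely a dense subset. Your compatibility-lifting argument (choose $q_p\le p$ in $Q$ and set $R_\alpha=\{p:h(q_p)\le\alpha\}$, then transfer $q_p\parallel p_j$ to $p\parallel p_j$) is exactly what is needed to pass from the dense $Q$ to all of $\Pbb$, and it works for every cardinal $\theta$, not only regular ones. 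So your treatment of this implication is the more complete one.
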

\begin{proof}
 The implication \indent$\ref{solft:onec}\Rightarrow\ref{solft:oneb}$ is trivial.

\noindent$\ref{solft:onea}\Rightarrow\ref{solft:oneb}$. Let $\langle Q_\alpha\mid\alpha<\theta\rangle$ be a~sequence of leaf-linked
subsets of~$\Pbb$ witnessing that $\Pbb$~is $\theta$-leaf-linked.
The set $Q:=\bigcup_{\alpha<\theta}Q_\alpha$ is a~dense subset of~$\Pbb$.
For $p\in Q$ define
$h(p)=\min\set{\alpha<\theta}{ p\in Q_\alpha}$. It is clear that every set $\set{p\in Q}{ h(p)=\alpha}$ is a~subset of the leaf-linked set~$Q_\alpha$ and is therefore leaf-linked.

\noindent$\ref{solft:onec}\Rightarrow\ref{solft:onea}$ for regular~$\theta$.
For every $\alpha<\theta$ the set $Q_\alpha=\set{p\in Q}{h(p)\le\alpha}$
is leaf-linked and $Q=\bigcup_{\alpha<\theta}Q_\alpha$ is a~dense subset
of~$\Pbb$.

\noindent$\ref{solft:oneb}\Rightarrow\ref{solft:onec}$ for $\theta=\omega$.
Note that $\set{p\in Q}{h(p)\le n}$ is a~finite union of leaf-linked sets
$\set{p\in Q}{h(p)=k}$ for $k\le n$ and is therefore leaf-linked.   
\end{proof}

We now give a useful sufficient condition under which the pairs $\la\Pbb,h\ra$ are $\sigma$-leaf-linked. We begin by considering the following property of pairs $\la\Pbb,h\ra$ where $\Por$ is a poset and $h$ is a height function on $\Por$: Say that $\la\Pbb,h\ra$ has property~$(\clubsuit)$ if:
\begin{equation}
 \parbox{0.8\textwidth}{
\begin{enumerate}[label=\rm(\Roman*)]
   \item if $\set{p_n}{n<\omega}$ is decreasing and $\exists m\in\omega\,\forall n\in\omega\,(h(p_n)\leq m)$, then $\exists p\in\Por\,\forall n\in\omega\,(p\leq p_n)$;

     \item $\forall m\in\omega\,\forall P\in[\Por]^{<\omega}\smallsetminus\{\emptyset\}\,\exists R\in[\Por]^{<\omega}\,(R\perp P)$ and $\forall p\in\Por\,(h(p)\leq m)\,[p\perp P\Rightarrow\exists r\in R\,(p\le r)]$;

   
\end{enumerate}
 }
 \label{Brtrevol}
 \tag{$\clubsuit$}
 \end{equation}

The following lemma justifies the introduction of the property~\eqref{Brtrevol}:

\begin{lemma}[{\cite[Lem.~1.2]{BrJ}, see also~\cite[Lem.~3.13]{Carsoft}}]\label{Br:lemma}
A~ccc poset $\Por$ with a~height function $h\colon\Por\to\omega$
satisfying~\emph{(\ref{Brtrevol})} is $\sigma$-leaf-linked.
\end{lemma}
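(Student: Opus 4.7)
The plan is to verify that $Q_m:=\{p\in\Por:h(p)\le m\}$ is $\mathrm{leaf}$-linked for every $m\in\omega$; granting this, $\Por$ is $\sigma$-$\mathrm{leaf}$-linked via \autoref{solft:one} (specifically the implication \ref{solft:onec}$\Rightarrow$\ref{solft:onea} at $\theta=\omega$), applied to $Q:=\Por$ with the given height function $h$. Fix $m\in\omega$ and a maximal antichain $\{p_n:n\in\omega\}\subseteq\Por$, and write $P_N:=\{p_0,\dots,p_{N-1}\}$ for $N\ge1$. Suppose toward a contradiction that for every $N\ge1$ there exists $q_N\in Q_m$ with $q_N\perp P_N$.

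For each $N\ge1$, apply clause (II) of $(\clubsuit)$ to $P_N$ and $m$ to obtain a finite $R_N\in[\Por]^{<\omega}$ with $R_N\perp P_N$ such that every $p\in Q_m$ with $p\perp P_N$ satisfies $p\leq r$ for some $r\in R_N$. Since $p\leq r$ with $h(p)\leq m$ forces $h(r)\leq h(p)\leq m$ by the monotonicity of the height function, we may (after discarding unused elements) assume $R_N\subseteq Q_m$; each $R_N$ is nonempty because $q_N$ itself must be covered. Moreover, since $P_N\subseteq P_{N+1}$, any $r'\in R_{N+1}\subseteq Q_m$ satisfies $r'\perp P_{N+1}\supseteq P_N$, so by the covering property at level $N$ there is some $r\in R_N$ with $r'\leq r$. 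This yields a backward map $R_{N+1}\to R_N$.

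Now apply König's lemma. Consider the tree $T$ whose nodes at level $k$ are sequences $(r_1,\dots,r_k)$ with $r_i\in R_i$ and $r_{i+1}\leq r_i$ for all $i<k$. Since each $R_i$ is finite, $T$ is finitely branching. Iterating the backward map, any choice of $r_k\in R_k$ produces $r_k\leq r_{k-1}\leq\cdots\leq r_1$ with $r_i\in R_i$, so $T$ is infinite at every level. König's lemma then gives an infinite branch $(r_i)_{i\in\omega}$: a decreasing sequence with $h(r_i)\le m$ for all $i$. By clause (I) of $(\clubsuit)$, there is $r^*\in\Por$ with $r^*\leq r_i$ for all $i$, and since each $r_i\perp P_i$, $r^*$ is incompatible with every $p_j$, contradicting maximality of the antichain.

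The main obstacle is arranging the $R_N$'s so that König's lemma applies: one must exploit the monotonicity of $h$ to keep each $R_N$ inside $Q_m$, and use the nesting $P_N\subseteq P_{N+1}$ to produce a canonical backward map $R_{N+1}\to R_N$. Once this structural setup is in place, the two clauses of $(\clubsuit)$ fit together cleanly—(II) supplies the finite, nested data needed for the König argument, while (I) delivers the lower bound that refutes the maximal antichain.
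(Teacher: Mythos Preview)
Your proof is correct and follows essentially the same route as the paper: define $Q_m=\{p:h(p)\le m\}$, assume leaf-linkedness fails, use clause~(II) of~(\ref{Brtrevol}) to produce finite covering sets $R_N\subseteq Q_m$ with a backward map $R_{N+1}\to R_N$, apply K\"onig's lemma to extract a decreasing sequence, and invoke clause~(I) for a lower bound contradicting maximality. Your write-up is in fact slightly more careful than the paper's in justifying why one may take $R_N\subseteq Q_m$ and why each $R_N$ is nonempty.
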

\begin{proof}
For $m\in\omega$ let \[Q_m:=\set{p\in\Por}{h(p)\leq m}\textrm{\ and\ } Q:=\bigcup_{m\in\omega}Q_m.\] 
We show that each $Q_m$ is leaf-linked, so assume that $\set{p_n}{n\in\omega}\subseteq\Por$ is a  maximal antichain in $\Por$ and show that there is some $n\in\omega$ such that $\forall q \in Q_m\,\exists j<n\,(q\,||\,p_j)$. Suppose not. So 
for each $n\in\omega$ let $R_n$ be a finite subset of $\Por$ obtained by employing~(II) of~(\ref{Brtrevol}) to the family $P=\set{p_i}{i<n}$. For each $n\in\omega$, enumerate $R_n$ as $\set{q_j^n}{j<k_n}$ for $k_n<\omega$. 

By assumption, none of these sets can be empty and we can assume that $q_j^n\in Q_m$ for all $j, n$. By~applying~(II) of~(\ref{Brtrevol}) they form
an $\omega$-tree with finite levels concerning $``\leq"$. Then by K\"{o}nig's lemma, there is a function $f\in\omega^\omega$ such that $q^0_{f(0)}\leq q^1_{f(1)}\leq q^3_{f(3)}\leq\cdots.$ By~(I) of~(\ref{Brtrevol}) there is a condition $q\leq q_{f(n)}^n$ for all $n$, contradicting
the fact that $\set{p_n}{n\in\omega}$ is a maximal antichain.   

Finally, $\Por$ is $\sigma$-leaf-linked by~\ref{solft:onec} of~\autoref{solft:one} applied to $\theta=\omega$ and for the height function $h$.
\end{proof}

We also consider the following property of pairs $\la\Pbb,h\ra$ where $\Por$ is a poset and $h$ is a height function on $\Por$: Say that $\la\Pbb,h\ra$ has property (\ref{Miropr}) if:
\begin{equation}
 \parbox{0.8\textwidth}{
 given $m\in\omega$ and a 
 finite non-maximal non-empty antichain $P\subseteq\Por$, there is some finite subset $R$ of $\Por$ such that $R\perp P$ and $\forall p\in\Pbb$
$[(p\perp P$ and $h(p)\le m)\Rightarrow\exists r\in R\, (p\le r)]$.
}
\tag{\faBug}
\label{Miropr}
\end{equation}

\begin{lemma}[{\cite[Lem.~3.11]{Carsoft}}]
A~ccc poset $\Por$ with a~height function $h\colon\Por\to\omega$
satisfying~\emph{(\ref{Miropr})} is $\sigma$-leaf-linked.
\end{lemma}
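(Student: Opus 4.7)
The plan is to adapt the proof of the preceding lemma (for property~(\ref{Brtrevol})) by replacing its clause~(II) with~(\ref{Miropr}) and its clause~(I) by the ccc hypothesis. As before, set $Q_m:=\{p\in\Por:h(p)\le m\}$ and $Q:=\bigcup_{m\in\omega}Q_m$; by~\ref{solft:onec} of~\autoref{solft:one} it suffices to show that each $Q_m$ is leaf-linked.

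Fix a maximal antichain $A=\{p_n:n\in\omega\}$ of~$\Por$ (countable by ccc, and without loss of generality infinite) and put $P_n:=\{p_0,\ldots,p_{n-1}\}$. Assume toward a contradiction that $Q_m$ is not leaf-linked witnessed by~$A$; so for every $n$ there is some $q_n\in Q_m$ with $q_n\perp P_n$. Since each $P_n$ is a finite, non-empty, non-maximal antichain, applying~(\ref{Miropr}) to~$(P_n,m)$ produces a finite $R_n\subseteq\Por$ with $R_n\perp P_n$ such that every $p\in Q_m$ incompatible with~$P_n$ lies below some $r\in R_n$. Height-monotonicity lets us assume $R_n\subseteq Q_m$, and the existence of $q_n$ shows $R_n\neq\varnothing$. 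Moreover every $r\in R_n$ is also incompatible with~$P_{n-1}$, so it lies below some $r'\in R_{n-1}$; this equips $\bigsqcup_n R_n$ with a finitely-branching infinite tree structure, and K\"onig's lemma produces a descending branch $r_0\ge r_1\ge\cdots$ with $r_n\in R_n$ and $r_n\perp p_i$ for all $i<n$.

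In the preceding proof, clause~(I) of~(\ref{Brtrevol}) then supplied a common lower bound $r^*\le r_n$ for every~$n$, and $r^*$ being incompatible with each~$p_i$ contradicted the maximality of~$A$. In the absence of~(I), my plan is to exploit the ccc of~$\Por$: passing to the Boolean completion~$\Bor$ of~$\Por$, maximality of~$A$ forces $r_n\le\bigvee_{i\ge n}p_i$ (since $r_n\wedge p_i=0_\Bor$ for $i<n$), and pairwise disjointness of the~$p_i$'s then yields $\bigwedge_n r_n=0_\Bor$. Consequently the set $\{p\in\Por:p\perp r_n\text{ for some }n\}$ is predense in~$\Por$, so the ccc lets us extract from it a countable maximal antichain $\{s_k:k\in\omega\}$ with each~$s_k$ incompatible with some~$r_{n_k}$. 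The main obstacle I expect is the final synthesis: packaging the two maximal antichains~$A$ and~$\{s_k\}$, the descending branch~$\{r_n\}$, and the incompatibility pattern ($r_n\perp p_i$ for $i<n$ and $s_k\perp r_{n_k}$) into a direct contradiction with the maximality of~$A$. This delicate step is where the ccc must be used essentially; the concrete packaging is carried out in~\cite[Lem.~3.11]{Carsoft}.
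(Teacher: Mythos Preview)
Your proposal is not a completed proof: after building the K\"onig branch $r_0\ge r_1\ge\cdots$ and observing $\bigwedge_n r_n=0_{\Bor}$ in the completion, you extract a second maximal antichain $\{s_k\}$ and then \emph{explicitly defer} the ``final synthesis'' to the cited reference. But there is nothing to synthesize along these lines. The data you have accumulated---a descending chain in~$Q_m$ with trivial infimum, a second countable maximal antichain, and the incompatibility pattern between them---are all perfectly consistent with the ccc (Cohen forcing is full of such configurations), so no contradiction is forthcoming. The K\"onig-branch strategy works in the preceding lemma only because clause~(I) of~(\ref{Brtrevol}) produces a genuine lower bound in~$\Por$; absent that, the branch is a dead end, and the ccc does not rescue it.

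The paper's argument takes a different route and never invokes K\"onig's lemma. Since each $r\in R_{n+1}$ lies in~$Q_m$ and satisfies $r\perp P_n$, the covering clause of~(\ref{Miropr}) for~$P_n$ puts $r$ below some element of~$R_n$; iterating, every element of every~$R_n$ lies below some element of the \emph{fixed finite set}~$R_0$. One then replaces each~$R_n$ by the corresponding subset $R_n'$ of~$R_0$, so that without loss of generality $R_n\subseteq R_0$ for all~$n$. Now finiteness of~$R_0$ together with maximality of~$P$ produce a single $m_0$ such that every element of~$R_0$ is compatible with~$P_{m_0}$; combined with $R_{m_0}\subseteq R_0$ and $R_{m_0}\perp P_{m_0}$ this forces $R_{m_0}=\varnothing$, whence by the covering clause every $q\in Q_m$ is compatible with~$P_{m_0}$. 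The ccc is used only to enumerate the maximal antichain~$P$ as a sequence $\{p_k:k\in\omega\}$ so that the~$P_n$'s exhaust it---no Boolean completion and no infinite chain are needed. The idea you are missing is precisely this reduction to a fixed finite set~$R_0$, which replaces the search for a lower bound of an infinite chain by a pigeonhole-style finiteness argument.
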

\begin{proof}
For $m\in\omega$ let \[Q_m:=\set{p\in\Por}{h(p)\leq m}\textrm{\ and\ } Q:=\bigcup_{m\in\omega}Q_m.\] 
We prove that every set $Q_m$ is leaf-linked. Let $P=\set{p_k}{k\in\omega}$ be arbitrary maximal antichain in~$\Pbb$.
Denote $P_n=\set{p_k}{k\le n}$.
By~using~(\ref{Miropr}), for every set~$P_n$ there is some finite subset $R_n$ of $\Por$ such that
$R_n\perp P_n$ and $\forall p\in Q_m\,[p\perp P_n\Rightarrow\exists r\in R_n\,(p\le r)$.
We can assume that $R_n\in[Q_m]^{<\omega}$ because
$(p\in Q_m$ and $p\le r)\Rightarrow r\in Q_m$.
Therefore,
\begin{equation*}
\forall n\in\omega\, \exists R_n\in[Q_m]^{<\omega}\smallsetminus\,\{\varnothing\}\,
[R_n\perp P_n\text{ and }
\forall p\in Q_m\, (p\perp P_n\Rightarrow\exists r\in R_n\, (r\le p))].
\tag{$\boxplus$}
\label{ch:soft}
\end{equation*}
Employing~(\ref{ch:soft}) for every $p\in R_{n+1}$ there is $r\in R_n$ such that $r\le p$
(because $p\in Q_m$ and $p\perp P_n$).
In this way, by induction, we prove that
$\forall n\in\omega\,\forall p\in R_n\,\exists r\in R_0)\,(r\le p)$.
Denote by $R_n'$ the set $\set{r\in R_0}{\exists p\in R_n\,(r\le p)}$.
Then~(\ref{ch:soft}) holds with the sets $R_n'$ instead of~$R_n$
and therefore without loss of generality, we can assume that $R_n\subseteq R_0$
for all $n\in\omega$.
Since $P$~is a~maximal antichain, there is $m_0\in\omega$
such that $\forall p\in R_0\,(p\not\perp P_{m_0})$. Then it follows that $R_{m_0}=\varnothing$ because $R_{m_0}\perp P_{m_0}$ and
$R_{m_0}\subseteq R_0$.
Then by~(\ref{ch:soft}) for $n=m_0$ we get $\forall p\in Q_m\,(p\not\perp P_{m_0})$.
This finishes the proof that $Q_m$~is leaf-linked. Then, $\Por$ is $\sigma$-leaf-linked by~\ref{solft:onec} of~\autoref{solft:one} applied to $\theta=\omega$ and for the height function $h$.
\end{proof}

\begin{example}[{\cite{BrJ}}]
\

\begin{enumerate}[label=\rm(\arabic*)]
    \item Consider random forcing $\Bor$ as
\[\Bor=\set{B\subseteq\cantor}{\forall s\in2^{<\omega}\colon[t]\cap B\neq\emptyset\Rightarrow\Lb([t]\cap B)>0}.\]
We define $h\colon\Bor\to\omega$ by $h(B)=\min\set{n\in\omega}{\Lb(B)\geq\frac{1}{n}}$. Brendle and Judah established that $\la\Bor, h\ra$ satisfies the property~\eqref{Brtrevol}. Hence, by using~\autoref{Br:lemma}, $\Bor$ is $\sigma$-leaf-linked.
    \item For $m\in\omega$ put \[B\cap2^m:=\set{t\in2^m}{[t]\cap B\neq\emptyset}.\]
We define the poset $\Bor^*$ as $\set{(B,n)}{B\in\Bor\textrm{\ and\ }n\in\omega}$. This forcing is ccc and it is used to add a perfect set of random reals. Now, define $\Bor_0^*$ by
\[\Bor_0^*=\set{(B,n)}{\forall s\in 2^n\cap B\colon\Lb([s]\cap B)\geq2^{\dom(s)+1}}.\]
It is clear that $\Bor_0^*\subseteq\Bor^*$ and $\Bor_0^*$
 is dense in $\Bor^*$ (recall the Lebesgue density). We define a height on $\Bor^*_0$ as follows: For $(B,n)$ let $h_1((B,n))=n$. In~\cite[Lem.~1.5]{BrJ}, it was demonstrated that $\la \Bor^*_0, h_1\ra$ fulfills the property~\eqref{Brtrevol}, so by~using~\autoref{Br:lemma} again, we obtain that $\Bor^*_0$ is $\sigma$-leaf-linked. Since $\Bor_0^*$
 is dense in $\Bor^*$, $\Bor_0$ is still $\sigma$-leaf-linked. 
\end{enumerate}
\end{example}

\begin{example}[{\cite{BrendlevasionI}}]\label{Brpr}
 Given a $b\in\omega^\omega$ define the forcing $\Pror_b$ as the poset whose conditions are of the form $p=(A,\seq{ \pi_n}{n\in A}, F)$ such that 
\begin{enumerate}[label=(\roman*)]
    \item $A\in[\omega]^{<\aleph_0}$,
    \item $\forall n\in A\colon\pi_n\colon\prod_{m<n}b(m)\to b(n)$, and 
    \item $F$ is a finite set of functions and $\forall f\in F\,\exists k\leq\omega\colon f\in\prod_{n<k}b(n)$, i.e., $F$ is a finite subset of $\Seq_{<\omega}(b)\cup\Seq(b)$.
\end{enumerate}
The order is defined by 
$(A',\pi',F')\le(A,\pi,F)$ iff
\begin{enumerate}[label=(\alph*)]

\item\label{Brpr:a}
$A\subseteq A'$, $\pi\subseteq\pi'$,
\item\label{Brpr:b}
if $A'\ne A$ and $A\ne\emptyset$, then $\max A<\min(A'\smallsetminus A)$,
\item\label{Brpr:c}
$\forall f\in F\,\exists g\in F'\colon f\subseteq g$, and
\item\label{Brpr:d}
$\forall f\in F\,\forall n\in (A'\smallsetminus A)\cap\dom(f)\colon\pi'_n(f{\restriction}n)=f(n)$.
\end{enumerate}

Furthermore, $\Pror_b$ is $\sigma$-centered (and thus ccc). Let $h\colon\Pror_b\to\omega$ be defined by $h(p)=\max(A_p\cup\{|F_p|\})$, which can be  easily seen that is a~height function where $p=(A_p,\pi_p,F_p)$. Brendle proved that the pair $\la\Pror_b,h\ra$ satisfies the conditions of the property~(\ref{Brtrevol}). Therefoe it is $\sigma$-leaf-linked ($\sigma$-$\Fr$-linked).
\end{example}

Motivated for all the discussions in the preceding sections, one could ask the following:  

\begin{problem}\label{ProbE:tot}
Are each one of the following constellations consistent with ZFC?
\begin{enumerate}[label=\rm(\arabic*)]
    \item\label{ProbE:tot:a} The constellation on the left side in~\autoref{4Etotal}.

    \item\label{ProbE:tot:b} The constellation on the right side in~\autoref{4Etotal}.
\end{enumerate}
\end{problem}

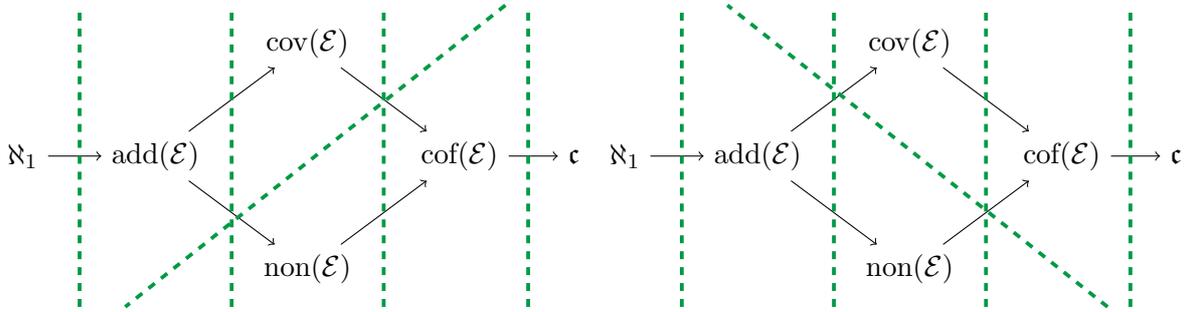
\begin{figure}[ht]
\centering
\begin{tikzpicture}[scale=1,every node/.style={scale=1}]
\small{
\node (azero) at (-0.75,1) {$\aleph_1$};
\node (addI) at (1,1) {$\add(\Ewf)$};
\node (covI) at (3,2.5) {$\cov(\Ewf)$};
\node (nonI) at (3,-0.5) {$\non(\Ewf)$};
\node (cofI) at (5,1) {$\cof(\Ewf)$};
\node (sizI) at (6.5,1) {$\cfrak$};

\draw (azero) edge[->] (addI);
\draw (addI) edge[->] (covI);
\draw (addI) edge[->] (nonI);
\draw (covI) edge[->] (cofI);
\draw (nonI) edge[->] (cofI);
\draw (cofI) edge[->] (sizI);
\draw[color=sug,line width=.05cm,dashed] (0,-1)--(0,3);
\draw[color=sug,line width=.05cm,dashed] (2,-1)--(2,3);
\draw[color=sug,line width=.05cm,dashed] (4,-1)--(4,3);
\draw[color=sug,line width=.05cm,dashed] (5.9,-1)--(5.9,3);
\draw[color=sug,line width=.05cm,dashed] (5.6,3)--(.6,-1);
}
\end{tikzpicture}
\centering
\begin{tikzpicture}[scale=1,every node/.style={scale=1}]
\small{
\node (azero) at (-0.75,1) {$\aleph_1$};
\node (addI) at (1,1) {$\add(\Ewf)$};
\node (covI) at (3,2.5) {$\cov(\Ewf)$};
\node (nonI) at (3,-0.5) {$\non(\Ewf)$};
\node (cofI) at (5,1) {$\cof(\Ewf)$};
\node (sizI) at (6.5,1) {$\cfrak$};

\draw (azero) edge[->] (addI);
\draw (addI) edge[->] (covI);
\draw (addI) edge[->] (nonI);
\draw (covI) edge[->] (cofI);
\draw (nonI) edge[->] (cofI);
\draw (cofI) edge[->] (sizI);
\draw[color=sug,line width=.05cm,dashed] (0,-1)--(0,3);
\draw[color=sug,line width=.05cm,dashed] (2,-1)--(2,3);
\draw[color=sug,line width=.05cm,dashed] (4,-1)--(4,3);
\draw[color=sug,line width=.05cm,dashed] (5.9,-1)--(5.9,3);
\draw[color=sug,line width=.05cm,dashed](5.6,-1)--(.6,3);
}
\end{tikzpicture}
\caption{Possible models where the four cardinal characteristics associated with $\Ewf$ can be pairwise different.}
\label{4Etotal}
\end{figure}

We have solved~\autoref{ProbE:tot}~\ref{ProbE:tot:b}, concretely, we proved:

\begin{theorem}[{\cite[Thm.~5.6]{Car23}}]\label{SepofE}
Assuming $\lambda_0\leq\lambda_1\leq\lambda_2\leq\lambda_3\leq\lambda_4$ are uncounatble cardinals satisfying certain conditions. Then there is a ccc poset forcing the constellation of~\autoref{Fig:SepofE}.
    
    \begin{figure}[H]
\centering   
    \begin{tikzpicture}[scale=1.06]
\small{
\node (aleph1) at (-1,3) {$\aleph_1$};
\node (addn) at (0.5,3){$\add(\Nwf)$};
\node (covn) at (0.5,7){$\cov(\Nwf)$};
\node (nonn) at (9.5,3) {$\non(\Nwf)$} ;
\node (cfn) at (9.5,7) {$\cof(\Nwf)$} ;
\node (addm) at (3.19,3) {$\add(\Mwf)=\subiii{\add(\Ewf)}$} ;
\node (covm) at (6.9,3) {$\cov(\Mwf)$} ;
\node (nonm) at (3.19,7) {$\non(\Mwf)$} ;
\node (cfm) at (6.9,7) {$\cof(\Mwf)=\subiii{\cof(\Ewf)}$} ;
\node (b) at (3.19,5) {$\bfrak$};
\node (d) at (6.9,5) {$\dfrak$};
\node (c) at (11,7) {$\cfrak$};
\node (none) at (4.12,6) {\subiii{$\non(\Ewf)$}};
\node (cove) at (5.8,4) {\subiii{$\cov(\Ewf)$}};
\draw (aleph1) edge[->] (addn)
      (addn) edge[->] (covn)
      (covn) edge [->] (nonm)
      (nonm)edge [->] (cfm)
      (cfm)edge [->] (cfn)
      (cfn) edge[->] (c);

\draw
   (addn) edge [->]  (addm)
   (addm) edge [->]  (covm)
   (covm) edge [->]  (nonn)
   (nonn) edge [->]  (cfn);
\draw (addm) edge [->] (b)
      (b)  edge [->] (nonm);
\draw (covm) edge [->] (d)
      (d)  edge[->] (cfm);
\draw (b) edge [->] (d);

\draw   (none) edge [->] (nonm)
        (none) edge [sub3,->] (cfm)
        (addm) edge [->] (cove);
      
\draw (none) edge [line width=.15cm,white,-] (nonn)
      (none) edge [->] (nonn);
      
\draw (cove) edge [line width=.15cm,white,-] (covn)
      (cove) edge [<-] (covm)
      (cove) edge [<-] (covn);

\draw (addm) edge [line width=.15cm,white,-] (none)
      (addm) edge [sub3,->] (none); 

\draw (cove) edge [line width=.15cm,white,-] (cfm)
      (cove) edge [sub3,->] (cfm);

\draw[color=sug,line width=.05cm] (-0.45,5.5)--(4.9,5.5);
\draw[color=sug,line width=.05cm] (1.45,5.5)--(1.45,2.5);
\draw[color=sug,line width=.05cm] (-0.45,2.5)--(-0.45,7.5);
\draw[color=sug,line width=.05cm] (4.9,7.5)--(4.9,2.5);
\draw[color=sug,line width=.05cm] (4.9,4.5)--(11,4.5);

\draw[circle, fill=yellow,color=yellow] (0.95,4) circle (0.4);
\draw[circle, fill=yellow,color=yellow] (2.2,4) circle (0.4);
\draw[circle, fill=yellow,color=yellow] (1.2,6) circle (0.4);
\draw[circle, fill=yellow,color=yellow] (7.4,3.6) circle (0.4);
\draw[circle, fill=yellow,color=yellow] (8.2,5.5) circle (0.4);
\node at (8.2,5.5) {$\lambda_4$};
\node at (0.95,4) {$\lambda_0$};
\node at (2.2,4) {$\lambda_1$};
\node at (1.2,6) {$\lambda_2$};
\node at (7.4,3.6) {$\lambda_3$};
}
\end{tikzpicture}
\caption{Constellation forced in~\autoref{SepofE}}
\label{Fig:SepofE}
\end{figure}
\end{theorem}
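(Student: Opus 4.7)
The plan is to use a matrix iteration, i.e., a two-dimensional coherent system of FS iterations of ccc forcings as developed by Blass--Shelah and refined by Brendle--Mej\'ia, adapted so that the iterands specifically control the characteristics of $\Ewf$. By~\autoref{ThmZFC:E}(3), $\add(\Ewf)=\add(\Mwf)$ and $\cof(\Ewf)=\cof(\Mwf)$, so the problem reduces to realising a five-value constellation of Cicho\'n's diagram that in addition pins $\non(\Ewf)$ and $\cov(\Ewf)$ at the prescribed intermediate values. I would take the vertical length of the system to be $\lambda_4$ and arrange $\lambda_3$-many cofinal ``threads'' to realise the smaller targets, as in the author's~\cite[Thm.~5.6]{Car23}.

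The iterands are chosen so that each of the four characteristics of $\Ewf$ can be lifted independently. On a cofinal subset of stages of size $\lambda_2$, I would use $(b,h)$-localisation forcing $\LOCor_{b,h}$ with $(b,h)$ satisfying $\limsup_{n\to\infty} h(n)/b(n)<1$; by~\autoref{uppbE} this pushes $\non(\Ewf)\geq\blc_{b,h}$ up to $\lambda_2$. On a larger cofinal subset of stages of size $\lambda_3$, I would use random forcing $\Bor$, whose contribution drives $\cov(\Ewf)$ up to $\lambda_3$; crucially,~\autoref{thm:pnon(E)} shows that random forcing preserves ``$\non(\Ewf)$-witnesses'', i.e.~a fixed ground-model set of size $\lambda_2$ not covered by any $\Ewf$-set stays outside every $\Ewf$-set added by random, so the target $\non(\Ewf)=\lambda_2$ is not overshot. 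Short segments of Hechler or sub-localisation forcings are inserted to lift $\add(\Mwf)=\add(\Ewf)$ to $\lambda_1$, while the overall length $\lambda_4$ together with $\lambda_4^{\aleph_0}=\lambda_4$ yields $\cof(\Mwf)=\lambda_4$.

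The preservation side rests on the two pillars set up in Sections~3 and~4. First, both $\LOCor_{b,h}$ (via~\autoref{mejiavertLOC}) and $\Bor$ (via the height-function example after~\autoref{Br:lemma}) are $\sigma$-Fr-linked, and Fr-linked posets compose well under FS iterations without adding dominating reals by~\autoref{bassoftII}; this keeps $\bfrak=\aleph_1$ and allows the witnesses at the lower levels ($\lambda_0,\lambda_1$) to survive. Second,~\autoref{thm:pnon(E)} is invoked inductively along the matrix: at each vertical level every $\Por$-name of a closed measure-zero set is absorbed into a ground-model closed null set, which lets the $\non(\Ewf)$-witness be propagated coherently across the random factors of the system.

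The main obstacle is arranging the bookkeeping so that the $\LOCor_{b,h}$ stages do not inadvertently drag $\cov(\Ewf)$ up to $\lambda_2$. Recall from~\autoref{add<non=cov} that a plain FS iteration of $\LOCor_{b,h}$ alone would already force $\cov(\Ewf)=\non(\Ewf)$, because $\cov(\Ewf)\leq\dlc_{b,h}$ is bounded above by the iteration length. The matrix iteration is designed precisely to decouple these: I would restrict $\LOCor_{b,h}$-use to a $\lambda_2$-cofinal sub-system, while random forcing ranges over the full $\lambda_3$-cofinal set and pushes $\cov(\Ewf)$ strictly higher. The ``certain conditions'' in the theorem statement are exactly the cardinal-arithmetic hypotheses (regularity of the intermediate $\lambda_i$'s, $\lambda_i^{\aleph_0}=\lambda_i$ above the relevant threshold, and the appropriate inequalities between their cofinalities) that guarantee the matrix iteration is ccc and that each of $\lambda_0,\lambda_1,\lambda_2,\lambda_3,\lambda_4$ is realised as the intended characteristic.
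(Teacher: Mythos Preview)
The paper does not include a proof of this theorem; it is merely stated with a citation to \cite[Thm.~5.6]{Car23}, so there is no in-paper argument to compare against. Your outline correctly identifies the two main ingredients actually used in that reference: a two-dimensional matrix (coherent) system of FS iterations in the Brendle--Mej\'ia style, and the preservation property of random forcing from \autoref{thm:pnon(E)} to keep $\non(\Ewf)$ from overshooting.

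That said, your sketch contains an internal inconsistency that would derail the argument. You invoke the $\sigma$-$\Fr$-linkedness of $\LOCor_{b,h}$ and $\Bor$ (via \autoref{mejiavertLOC} and \autoref{Br:lemma}) to conclude that ``this keeps $\bfrak=\aleph_1$'', but the target constellation in \autoref{Fig:SepofE} has $\bfrak=\add(\Mwf)=\lambda_1$, which is in general strictly larger than $\aleph_1$. In the same paragraph you propose Hechler forcing to raise $\add(\Mwf)$ to $\lambda_1$; Hechler adds dominating reals and is \emph{not} $\Fr$-linked, so once Hechler steps are present the $\Fr$-linkedness argument of \autoref{bassoftII} cannot be used globally to bound $\bfrak$. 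In the actual construction, $\bfrak=\lambda_1$ is obtained by confining the Hechler-type iterands to a $\lambda_1$-sized portion of the matrix and controlling the remaining iterands through a separate preservation mechanism (goodness with respect to the relational system for eventual domination), not through $\Fr$-linkedness alone.

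A secondary point: the r\^ole you assign to $\LOCor_{b,h}$ is not needed in this particular constellation. Since here $\non(\Ewf)=\lambda_2=\non(\Mwf)$, the upper bound from \autoref{ThmZFC:E}(2) already caps $\non(\Ewf)$ at $\lambda_2$, and the lower bound $\non(\Ewf)\geq\lambda_2$ is obtained in \cite{Car23} from the structure of the (restricted) random steps over submodels of the appropriate size together with \autoref{thm:pnon(E)}, rather than from localisation forcing.
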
 

We conclude this section by presenting several problems related to~\autoref{SepofE}, which we are interested in:

\begin{problem}\label{prob:E}
Are each one of the following constellations consistent with ZFC?
\begin{enumerate}[label=\rm(\arabic*)]
    \item\label{CMUZ2} Constellation of~\autoref{Fig:cmuz}.
    
    \item\label{probE:a} Constellation of~\autoref{FigEa}.

    \item\label{probE:b} Constellation of~\autoref{FigEb}.

    \item\label{probE:c} Constellation of~\autoref{FigEc}.

    \item\label{probE:d} Constellation of~\autoref{FigEd}.

    \item\label{probE:e} Constellation of~\autoref{FigEe}.
    
    \item\label{probE:f} Constellation of~\autoref{FigEf}.

    \item\label{probE:g} Constellation of~\autoref{FigEg}.

    \item\label{probE:h} Constellation of~\autoref{FigEh}.

     \item\label{probE:i} Constellation of~\autoref{FigEi}.
     
     \item\label{probE:j} Constellation of~\autoref{FigEj}.
\end{enumerate}
\end{problem}

\begin{figure}[ht!]
\centering
\begin{tikzpicture}[scale=1.06]
\small{
\node (aleph1) at (-1,3) {$\aleph_1$};
\node (addn) at (0.5,3){$\add(\Nwf)$};
\node (covn) at (0.5,7){$\cov(\Nwf)$};
\node (nonn) at (9.5,3) {$\non(\Nwf)$} ;
\node (cfn) at (9.5,7) {$\cof(\Nwf)$} ;
\node (addm) at (3.19,3) {$\add(\Mwf)=\subiii{\add(\Ewf)}$} ;
\node (covm) at (6.9,3) {$\cov(\Mwf)$} ;
\node (nonm) at (3.19,7) {$\non(\Mwf)$} ;
\node (cfm) at (6.9,7) {$\cof(\Mwf)=\subiii{\cof(\Ewf)}$} ;
\node (b) at (3.19,5) {$\bfrak$};
\node (d) at (6.9,5) {$\dfrak$};
\node (c) at (11,7) {$\cfrak$};
\node (none) at (4.12,6) {\subiii{$\non(\Ewf)$}};
\node (cove) at (5.8,4) {\subiii{$\cov(\Ewf)$}};
\draw (aleph1) edge[->] (addn)
      (addn) edge[->] (covn)
      (covn) edge [->] (nonm)
      (nonm)edge [->] (cfm)
      (cfm)edge [->] (cfn)
      (cfn) edge[->] (c);

\draw
   (addn) edge [->]  (addm)
   (addm) edge [->]  (covm)
   (covm) edge [->]  (nonn)
   (nonn) edge [->]  (cfn);
\draw (addm) edge [->] (b)
      (b)  edge [->] (nonm);
\draw (covm) edge [->] (d)
      (d)  edge[->] (cfm);
\draw (b) edge [->] (d);

\draw   (none) edge [->] (nonm)
        (none) edge [sub3,->] (cfm)
        (addm) edge [->] (cove);
      
\draw (none) edge [line width=.15cm,white,-] (nonn)
      (none) edge [->] (nonn);
      
\draw (cove) edge [line width=.15cm,white,-] (covn)
      (cove) edge [<-] (covm)
      (cove) edge [<-] (covn);

\draw (addm) edge [line width=.15cm,white,-] (none)
      (addm) edge [sub3,->] (none); 

\draw (cove) edge [line width=.15cm,white,-] (cfm)
      (cove) edge [sub3,->] (cfm);

\draw[color=sug,line width=.05cm] (4.95,2.5)--(4.95,8.5);
\draw[color=sug,line width=.05cm] (0.75,6.45)--(4.95,6.45); 
\draw[color=sug,line width=.05cm] (-0.55,5.45)--(4.95,5.45);
\draw[color=sug,line width=.05cm] (1.35,2.5)--(1.35,5.45);
\draw[color=sug,line width=.05cm] (-0.55,2.5)--(-0.55,8.5);
\draw[color=sug,line width=.05cm] (1.8,6.45)--(1.8,8.5);
\draw[color=sug,line width=.05cm] (0.75,5.45)--(0.75,6.45);

\draw[circle, fill=yellow,color=yellow] (1.35,5.95) circle (0.4);
\draw[circle, fill=yellow,color=yellow] (0,4) circle (0.4);
\draw[circle, fill=yellow,color=yellow] (0,5.95) circle (0.4);
\draw[circle, fill=yellow,color=yellow] (2.5,4) circle (0.4);
\draw[circle, fill=yellow,color=yellow] (8.2,5) circle (0.4);
\draw[circle, fill=yellow,color=yellow] (3.19,7.8) circle (0.4);
\node at (8.2,5) {$\lambda_5$};
\node at (1.35,5.95) {$\lambda_3$};
\node at (0,4) {$\lambda_0$};
\node at (0,5.95) {$\lambda_1$};
\node at (2.5,4) {$\lambda_2$};
\node at (3.19,7.8) {$\lambda_4$};
}
\end{tikzpicture}
\caption{Constellation forced in~\autoref{prob:E}~\ref{CMUZ2}}
\label{Fig:cmuz}
\end{figure}

\begin{figure}[ht!]
\centering
\begin{tikzpicture}[scale=1.06]
\small{
\node (aleph1) at (-1,3) {$\aleph_1$};
\node (addn) at (0.5,3){$\add(\Nwf)$};
\node (covn) at (0.5,7){$\cov(\Nwf)$};
\node (nonn) at (9.5,3) {$\non(\Nwf)$} ;
\node (cfn) at (9.5,7) {$\cof(\Nwf)$} ;
\node (addm) at (3.19,3) {$\add(\Mwf)=\subiii{\add(\Ewf)}$} ;
\node (covm) at (6.9,3) {$\cov(\Mwf)$} ;
\node (nonm) at (3.19,7) {$\non(\Mwf)$} ;
\node (cfm) at (6.9,7) {$\cof(\Mwf)=\subiii{\cof(\Ewf)}$} ;
\node (b) at (3.19,5) {$\bfrak$};
\node (d) at (6.9,5) {$\dfrak$};
\node (c) at (11,7) {$\cfrak$};
\node (none) at (4.12,6) {\subiii{$\non(\Ewf)$}};
\node (cove) at (5.8,4) {\subiii{$\cov(\Ewf)$}};
\draw (aleph1) edge[->] (addn)
      (addn) edge[->] (covn)
      (covn) edge [->] (nonm)
      (nonm)edge [->] (cfm)
      (cfm)edge [->] (cfn)
      (cfn) edge[->] (c);

\draw
   (addn) edge [->]  (addm)
   (addm) edge [->]  (covm)
   (covm) edge [->]  (nonn)
   (nonn) edge [->]  (cfn);
\draw (addm) edge [->] (b)
      (b)  edge [->] (nonm);
\draw (covm) edge [->] (d)
      (d)  edge[->] (cfm);
\draw (b) edge [->] (d);

\draw   (none) edge [->] (nonm)
        (none) edge [sub3,->] (cfm)
        (addm) edge [->] (cove);
      
\draw (none) edge [line width=.15cm,white,-] (nonn)
      (none) edge [->] (nonn);
      
\draw (cove) edge [line width=.15cm,white,-] (covn)
      (cove) edge [<-] (covm)
      (cove) edge [<-] (covn);

\draw (addm) edge [line width=.15cm,white,-] (none)
      (addm) edge [sub3,->] (none); 

\draw (cove) edge [line width=.15cm,white,-] (cfm)
      (cove) edge [sub3,->] (cfm);

\draw[color=sug,line width=.05cm] (4.95,2.5)--(4.95,8.5);
\draw[color=sug,line width=.05cm] (0.75,6.45)--(4.95,6.45); 
\draw[color=sug,line width=.05cm] (-0.55,5.45)--(4.95,5.45);
\draw[color=sug,line width=.05cm] (1.35,2.5)--(1.35,5.45);
\draw[color=sug,line width=.05cm] (-0.55,2.5)--(-0.55,8.5);
\draw[color=sug,line width=.05cm] (1.8,6.45)--(1.8,8.5);
\draw[color=sug,line width=.05cm] (0.75,5.45)--(0.75,6.45);

\draw[circle, fill=yellow,color=yellow] (1.35,5.95) circle (0.4);
\draw[circle, fill=yellow,color=yellow] (0,4) circle (0.4);
\draw[circle, fill=yellow,color=yellow] (0,5.95) circle (0.4);
\draw[circle, fill=yellow,color=yellow] (2.5,4) circle (0.4);
\draw[circle, fill=yellow,color=yellow] (8.2,5) circle (0.4);
\draw[circle, fill=yellow,color=yellow] (3.19,7.8) circle (0.4);
\node at (8.2,5) {$\lambda_5$};
\node at (1.35,5.95) {$\lambda_3$};
\node at (0,4) {$\lambda_0$};
\node at (0,5.95) {$\lambda_2$};
\node at (2.5,4) {$\lambda_1$};
\node at (3.19,7.8) {$\lambda_4$};
}
\end{tikzpicture}
\caption{Constellation of~\autoref{prob:E}~\ref{probE:a}}
\label{FigEa}
\end{figure}

\begin{figure}[ht!]
\centering
\begin{tikzpicture}[scale=1.06]
\small{
\node (aleph1) at (-1,3) {$\aleph_1$};
\node (addn) at (0.5,3){$\add(\Nwf)$};
\node (covn) at (0.5,7){$\cov(\Nwf)$};
\node (nonn) at (9.5,3) {$\non(\Nwf)$} ;
\node (cfn) at (9.5,7) {$\cof(\Nwf)$} ;
\node (addm) at (3.19,3) {$\add(\Mwf)=\subiii{\add(\Ewf)}$} ;
\node (covm) at (6.9,3) {$\cov(\Mwf)$} ;
\node (nonm) at (3.19,7) {$\non(\Mwf)$} ;
\node (cfm) at (6.9,7) {$\cof(\Mwf)=\subiii{\cof(\Ewf)}$} ;
\node (b) at (3.19,5) {$\bfrak$};
\node (d) at (6.9,5) {$\dfrak$};
\node (c) at (11,7) {$\cfrak$};
\node (none) at (4.12,6) {\subiii{$\non(\Ewf)$}};
\node (cove) at (5.8,4) {\subiii{$\cov(\Ewf)$}};
\draw (aleph1) edge[->] (addn)
      (addn) edge[->] (covn)
      (covn) edge [->] (nonm)
      (nonm)edge [->] (cfm)
      (cfm)edge [->] (cfn)
      (cfn) edge[->] (c);

\draw
   (addn) edge [->]  (addm)
   (addm) edge [->]  (covm)
   (covm) edge [->]  (nonn)
   (nonn) edge [->]  (cfn);
\draw (addm) edge [->] (b)
      (b)  edge [->] (nonm);
\draw (covm) edge [->] (d)
      (d)  edge[->] (cfm);
\draw (b) edge [->] (d);

\draw   (none) edge [->] (nonm)
        (none) edge [sub3,->] (cfm)
        (addm) edge [->] (cove);
      
\draw (none) edge [line width=.15cm,white,-] (nonn)
      (none) edge [->] (nonn);
      
\draw (cove) edge [line width=.15cm,white,-] (covn)
      (cove) edge [<-] (covm)
      (cove) edge [<-] (covn);

\draw (addm) edge [line width=.15cm,white,-] (none)
      (addm) edge [sub3,->] (none); 

\draw (cove) edge [line width=.15cm,white,-] (cfm)
      (cove) edge [sub3,->] (cfm);

\draw[color=sug,line width=.05cm] (4.95,2.5)--(4.95,8.5);
\draw[color=sug,line width=.05cm] (0.75,6.45)--(4.95,6.45); 
\draw[color=sug,line width=.05cm] (-0.55,5.45)--(4.95,5.45);
\draw[color=sug,line width=.05cm] (1.35,2.5)--(1.35,5.45);
\draw[color=sug,line width=.05cm] (-0.55,2.5)--(-0.55,8.5);
\draw[color=sug,line width=.05cm] (1.8,6.45)--(1.8,8.5);
\draw[color=sug,line width=.05cm] (0.75,5.45)--(0.75,6.45);

\draw[circle, fill=yellow,color=yellow] (1.35,5.95) circle (0.4);
\draw[circle, fill=yellow,color=yellow] (0,4) circle (0.4);
\draw[circle, fill=yellow,color=yellow] (0,5.95) circle (0.4);
\draw[circle, fill=yellow,color=yellow] (2.5,4) circle (0.4);
\draw[circle, fill=yellow,color=yellow] (8.2,5) circle (0.4);
\draw[circle, fill=yellow,color=yellow] (3.19,7.8) circle (0.4);
\node at (8.2,5) {$\lambda_5$};
\node at (1.35,5.95) {$\lambda_2$};
\node at (0,4) {$\lambda_0$};
\node at (0,5.95) {$\lambda_3$};
\node at (2.5,4) {$\lambda_1$};
\node at (3.19,7.8) {$\lambda_4$};
}
\end{tikzpicture}
\caption{Constellation of~\autoref{prob:E}~\ref{probE:b}}
\label{FigEb}
\end{figure}

\begin{figure}[ht!]
\centering
\begin{tikzpicture}[scale=1.06]
\small{
\node (aleph1) at (-1,3) {$\aleph_1$};
\node (addn) at (0.5,3){$\add(\Nwf)$};
\node (covn) at (0.5,7){$\cov(\Nwf)$};
\node (nonn) at (9.5,3) {$\non(\Nwf)$} ;
\node (cfn) at (9.5,7) {$\cof(\Nwf)$} ;
\node (addm) at (3.19,3) {$\add(\Mwf)=\subiii{\add(\Ewf)}$} ;
\node (covm) at (6.9,3) {$\cov(\Mwf)$} ;
\node (nonm) at (3.19,7) {$\non(\Mwf)$} ;
\node (cfm) at (6.9,7) {$\cof(\Mwf)=\subiii{\cof(\Ewf)}$} ;
\node (b) at (3.19,5) {$\bfrak$};
\node (d) at (6.9,5) {$\dfrak$};
\node (c) at (11,7) {$\cfrak$};
\node (e) at (2,4.8) {$\efrak$};
\node (none) at (4.12,6) {\subiii{$\non(\Ewf)$}};
\node (cove) at (5.8,4.25) {\subiii{$\cov(\Ewf)$}};
\draw (aleph1) edge[->] (addn)
      (addn) edge[->] (covn)
      (covn) edge [->] (nonm)
      (nonm)edge [->] (cfm)
      (cfm)edge [->] (cfn)
      (cfn) edge[->] (c);

\draw
   (addn) edge [->]  (addm)
   (addm) edge [->]  (covm)
   (covm) edge [->]  (nonn)
   (nonn) edge [->]  (cfn);
\draw (addm) edge [->] (b)
      (b)  edge [->] (nonm);
\draw (covm) edge [->] (d)
      (d)  edge[->] (cfm);
\draw (b) edge [->] (d);

\draw   (none) edge [->] (nonm)
        (none) edge [sub3,->] (cfm)
        (addm) edge [->] (cove);
      
\draw (none) edge [line width=.15cm,white,-] (nonn)
      (none) edge [->] (nonn);
      
\draw (cove) edge [line width=.15cm,white,-] (covn)
      (cove) edge [<-] (covm)
      (cove) edge [<-] (covn);

\draw (addm) edge [line width=.15cm,white,-] (none)
      (addm) edge [sub3,->] (none); 

\draw (cove) edge [line width=.15cm,white,-] (cfm)
      (cove) edge [sub3,->] (cfm);

\draw (e) edge [line width=.15cm,white,-] (none)
      (e) edge [->] (none);   

\draw (e) edge [line width=.15cm,white,-] (covm)
      (e) edge [->] (covm);  

\draw (e) edge [line width=.15cm,white,-] (aleph1)
      (e) edge [->] (aleph1);

\draw[color=sug,line width=.05cm] (4.95,2.5)--(4.95,8.5);
\draw[color=sug,line width=.05cm] (0.75,6.45)--(4.95,6.45); 
\draw[color=sug,line width=.05cm] (-0.55,5.45)--(4.95,5.45);
\draw[color=sug,line width=.05cm] (-0.55,2.5)--(-0.55,8.5);
\draw[color=sug,line width=.05cm] (1.8,6.45)--(1.8,8.5);
\draw[color=sug,line width=.05cm] (0.75,5.45)--(0.75,6.45); 
\draw[color=sug,line width=.05cm] (0.65,4.3)--(0.65,5.45);
\draw[color=sug,line width=.05cm] (0.65,4.3)--(2.5,4.3);
\draw[color=sug,line width=.05cm] (1.4,2.5)--(1.4,4.3);
\draw[color=sug,line width=.05cm] (2.5,4.3)--(2.5,5.45);

\draw[circle, fill=yellow,color=yellow] (1.35,5.95) circle (0.4);
\draw[circle, fill=yellow,color=yellow] (0,4.9) circle (0.4);
\draw[circle, fill=yellow,color=yellow] (0,5.95) circle (0.4);
\draw[circle, fill=yellow,color=yellow] (2.5,3.7) circle (0.4);
\draw[circle, fill=yellow,color=yellow] (8.2,5) circle (0.4);
\draw[circle, fill=yellow,color=yellow] (3.19,7.8) circle (0.4);
\draw[circle, fill=yellow,color=yellow] (1.2,4.9) circle (0.4);
\node at (8.2,5) {$\lambda_6$};
\node at (1.35,5.95) {$\lambda_4$};
\node at (1.2,4.9) {$\lambda_3$};
\node at (0,4.9) {$\lambda_0$};
\node at (0,5.95) {$\lambda_1$};
\node at (2.5,3.7) {$\lambda_2$};
\node at (3.19,7.8) {$\lambda_5$};
}
\end{tikzpicture}
\caption{Constellation of~\autoref{prob:E}~\ref{probE:c}}
\label{FigEc}
\end{figure}

\begin{figure}[ht!]
\centering
\begin{tikzpicture}[scale=1.06]
\small{
\node (aleph1) at (-1,3) {$\aleph_1$};
\node (addn) at (0.5,3){$\add(\Nwf)$};
\node (covn) at (0.5,7){$\cov(\Nwf)$};
\node (nonn) at (9.5,3) {$\non(\Nwf)$} ;
\node (cfn) at (9.5,7) {$\cof(\Nwf)$} ;
\node (addm) at (3.19,3) {$\add(\Mwf)=\subiii{\add(\Ewf)}$} ;
\node (covm) at (6.9,3) {$\cov(\Mwf)$} ;
\node (nonm) at (3.19,7) {$\non(\Mwf)$} ;
\node (cfm) at (6.9,7) {$\cof(\Mwf)=\subiii{\cof(\Ewf)}$} ;
\node (b) at (3.19,5) {$\bfrak$};
\node (d) at (6.9,5) {$\dfrak$};
\node (c) at (11,7) {$\cfrak$};
\node (e) at (2,4.8) {$\efrak$};
\node (none) at (4.12,6) {\subiii{$\non(\Ewf)$}};
\node (cove) at (5.8,4.25) {\subiii{$\cov(\Ewf)$}};
\draw (aleph1) edge[->] (addn)
      (addn) edge[->] (covn)
      (covn) edge [->] (nonm)
      (nonm)edge [->] (cfm)
      (cfm)edge [->] (cfn)
      (cfn) edge[->] (c);

\draw
   (addn) edge [->]  (addm)
   (addm) edge [->]  (covm)
   (covm) edge [->]  (nonn)
   (nonn) edge [->]  (cfn);
\draw (addm) edge [->] (b)
      (b)  edge [->] (nonm);
\draw (covm) edge [->] (d)
      (d)  edge[->] (cfm);
\draw (b) edge [->] (d);

\draw   (none) edge [->] (nonm)
        (none) edge [sub3,->] (cfm)
        (addm) edge [->] (cove);
      
\draw (none) edge [line width=.15cm,white,-] (nonn)
      (none) edge [->] (nonn);
      
\draw (cove) edge [line width=.15cm,white,-] (covn)
      (cove) edge [<-] (covm)
      (cove) edge [<-] (covn);

\draw (addm) edge [line width=.15cm,white,-] (none)
      (addm) edge [sub3,->] (none); 

\draw (cove) edge [line width=.15cm,white,-] (cfm)
      (cove) edge [sub3,->] (cfm);

\draw (e) edge [line width=.15cm,white,-] (none)
      (e) edge [->] (none);   

\draw (e) edge [line width=.15cm,white,-] (covm)
      (e) edge [->] (covm);  

\draw (e) edge [line width=.15cm,white,-] (aleph1)
      (e) edge [->] (aleph1);

\draw[color=sug,line width=.05cm] (4.95,2.5)--(4.95,8.5);
\draw[color=sug,line width=.05cm] (0.75,6.45)--(4.95,6.45); 
\draw[color=sug,line width=.05cm] (-0.55,5.45)--(4.95,5.45);
\draw[color=sug,line width=.05cm] (-0.55,2.5)--(-0.55,8.5);
\draw[color=sug,line width=.05cm] (1.8,6.45)--(1.8,8.5);
\draw[color=sug,line width=.05cm] (0.75,5.45)--(0.75,6.45); 
\draw[color=sug,line width=.05cm] (0.65,4.3)--(0.65,5.45);
\draw[color=sug,line width=.05cm] (0.65,4.3)--(2.5,4.3);
\draw[color=sug,line width=.05cm] (1.4,2.5)--(1.4,4.3);
\draw[color=sug,line width=.05cm] (2.5,4.3)--(2.5,5.45);

\draw[circle, fill=yellow,color=yellow] (1.35,5.95) circle (0.4);
\draw[circle, fill=yellow,color=yellow] (0,4.9) circle (0.4);
\draw[circle, fill=yellow,color=yellow] (0,5.95) circle (0.4);
\draw[circle, fill=yellow,color=yellow] (2.5,3.7) circle (0.4);
\draw[circle, fill=yellow,color=yellow] (8.2,5) circle (0.4);
\draw[circle, fill=yellow,color=yellow] (3.19,7.8) circle (0.4);
\draw[circle, fill=yellow,color=yellow] (1.2,4.9) circle (0.4);
\node at (8.2,5) {$\lambda_6$};
\node at (1.35,5.95) {$\lambda_4$};
\node at (1.2,4.9) {$\lambda_3$};
\node at (0,4.9) {$\lambda_0$};
\node at (0,5.95) {$\lambda_2$};
\node at (2.5,3.7) {$\lambda_1$};
\node at (3.19,7.8) {$\lambda_5$};
}
\end{tikzpicture}
\caption{Constellation of~\autoref{prob:E}~\ref{probE:d}}
\label{FigEd}
\end{figure}

\begin{figure}[ht!]
\centering
\begin{tikzpicture}[scale=1.06]
\small{
\node (aleph1) at (-1,3) {$\aleph_1$};
\node (addn) at (0.5,3){$\add(\Nwf)$};
\node (covn) at (0.5,7){$\cov(\Nwf)$};
\node (nonn) at (9.5,3) {$\non(\Nwf)$} ;
\node (cfn) at (9.5,7) {$\cof(\Nwf)$} ;
\node (addm) at (3.19,3) {$\add(\Mwf)=\subiii{\add(\Ewf)}$} ;
\node (covm) at (6.9,3) {$\cov(\Mwf)$} ;
\node (nonm) at (3.19,7) {$\non(\Mwf)$} ;
\node (cfm) at (6.9,7) {$\cof(\Mwf)=\subiii{\cof(\Ewf)}$} ;
\node (b) at (3.19,5) {$\bfrak$};
\node (d) at (6.9,5) {$\dfrak$};
\node (c) at (11,7) {$\cfrak$};
\node (e) at (2,4.8) {$\efrak$};
\node (none) at (4.12,6) {\subiii{$\non(\Ewf)$}};
\node (cove) at (5.8,4.25) {\subiii{$\cov(\Ewf)$}};
\draw (aleph1) edge[->] (addn)
      (addn) edge[->] (covn)
      (covn) edge [->] (nonm)
      (nonm)edge [->] (cfm)
      (cfm)edge [->] (cfn)
      (cfn) edge[->] (c);

\draw
   (addn) edge [->]  (addm)
   (addm) edge [->]  (covm)
   (covm) edge [->]  (nonn)
   (nonn) edge [->]  (cfn);
\draw (addm) edge [->] (b)
      (b)  edge [->] (nonm);
\draw (covm) edge [->] (d)
      (d)  edge[->] (cfm);
\draw (b) edge [->] (d);

\draw   (none) edge [->] (nonm)
        (none) edge [sub3,->] (cfm)
        (addm) edge [->] (cove);
      
\draw (none) edge [line width=.15cm,white,-] (nonn)
      (none) edge [->] (nonn);
      
\draw (cove) edge [line width=.15cm,white,-] (covn)
      (cove) edge [<-] (covm)
      (cove) edge [<-] (covn);

\draw (addm) edge [line width=.15cm,white,-] (none)
      (addm) edge [sub3,->] (none); 

\draw (cove) edge [line width=.15cm,white,-] (cfm)
      (cove) edge [sub3,->] (cfm);

\draw (e) edge [line width=.15cm,white,-] (none)
      (e) edge [->] (none);   

\draw (e) edge [line width=.15cm,white,-] (covm)
      (e) edge [->] (covm);  

\draw (e) edge [line width=.15cm,white,-] (aleph1)
      (e) edge [->] (aleph1);

\draw[color=sug,line width=.05cm] (4.95,2.5)--(4.95,8.5);
\draw[color=sug,line width=.05cm] (0.75,6.45)--(4.95,6.45); 
\draw[color=sug,line width=.05cm] (-0.55,5.45)--(4.95,5.45);
\draw[color=sug,line width=.05cm] (-0.55,2.5)--(-0.55,8.5);
\draw[color=sug,line width=.05cm] (1.8,6.45)--(1.8,8.5);
\draw[color=sug,line width=.05cm] (0.75,5.45)--(0.75,6.45); 
\draw[color=sug,line width=.05cm] (0.65,4.3)--(0.65,5.45);
\draw[color=sug,line width=.05cm] (0.65,4.3)--(2.5,4.3);
\draw[color=sug,line width=.05cm] (1.4,2.5)--(1.4,4.3);
\draw[color=sug,line width=.05cm] (2.5,4.3)--(2.5,5.45);

\draw[circle, fill=yellow,color=yellow] (1.35,5.95) circle (0.4);
\draw[circle, fill=yellow,color=yellow] (0,4.9) circle (0.4);
\draw[circle, fill=yellow,color=yellow] (0,5.95) circle (0.4);
\draw[circle, fill=yellow,color=yellow] (2.5,3.7) circle (0.4);
\draw[circle, fill=yellow,color=yellow] (8.2,5) circle (0.4);
\draw[circle, fill=yellow,color=yellow] (3.19,7.8) circle (0.4);
\draw[circle, fill=yellow,color=yellow] (1.2,4.9) circle (0.4);
\node at (8.2,5) {$\lambda_6$};
\node at (1.35,5.95) {$\lambda_4$};
\node at (1.2,4.9) {$\lambda_2$};
\node at (0,4.9) {$\lambda_0$};
\node at (0,5.95) {$\lambda_3$};
\node at (2.5,3.7) {$\lambda_1$};
\node at (3.19,7.8) {$\lambda_5$};
}
\end{tikzpicture}
\caption{Constellation of~\autoref{prob:E}~\ref{probE:e}}
\label{FigEe}
\end{figure}

\begin{figure}[ht!]
\centering
\begin{tikzpicture}[scale=1.06]
\small{
\node (aleph1) at (-1,3) {$\aleph_1$};
\node (addn) at (0.5,3){$\add(\Nwf)$};
\node (covn) at (0.5,7){$\cov(\Nwf)$};
\node (nonn) at (9.5,3) {$\non(\Nwf)$} ;
\node (cfn) at (9.5,7) {$\cof(\Nwf)$} ;
\node (addm) at (3.19,3) {$\add(\Mwf)=\subiii{\add(\Ewf)}$} ;
\node (covm) at (6.9,3) {$\cov(\Mwf)$} ;
\node (nonm) at (3.19,7) {$\non(\Mwf)$} ;
\node (cfm) at (6.9,7) {$\cof(\Mwf)=\subiii{\cof(\Ewf)}$} ;
\node (b) at (3.19,5) {$\bfrak$};
\node (d) at (6.9,5) {$\dfrak$};
\node (c) at (11,7) {$\cfrak$};
\node (e) at (2,4.8) {$\efrak$};
\node (none) at (4.12,6) {\subiii{$\non(\Ewf)$}};
\node (cove) at (5.8,4.25) {\subiii{$\cov(\Ewf)$}};
\draw (aleph1) edge[->] (addn)
      (addn) edge[->] (covn)
      (covn) edge [->] (nonm)
      (nonm)edge [->] (cfm)
      (cfm)edge [->] (cfn)
      (cfn) edge[->] (c);

\draw
   (addn) edge [->]  (addm)
   (addm) edge [->]  (covm)
   (covm) edge [->]  (nonn)
   (nonn) edge [->]  (cfn);
\draw (addm) edge [->] (b)
      (b)  edge [->] (nonm);
\draw (covm) edge [->] (d)
      (d)  edge[->] (cfm);
\draw (b) edge [->] (d);

\draw   (none) edge [->] (nonm)
        (none) edge [sub3,->] (cfm)
        (addm) edge [->] (cove);
      
\draw (none) edge [line width=.15cm,white,-] (nonn)
      (none) edge [->] (nonn);
      
\draw (cove) edge [line width=.15cm,white,-] (covn)
      (cove) edge [<-] (covm)
      (cove) edge [<-] (covn);

\draw (addm) edge [line width=.15cm,white,-] (none)
      (addm) edge [sub3,->] (none); 

\draw (cove) edge [line width=.15cm,white,-] (cfm)
      (cove) edge [sub3,->] (cfm);

\draw (e) edge [line width=.15cm,white,-] (none)
      (e) edge [->] (none);   

\draw (e) edge [line width=.15cm,white,-] (covm)
      (e) edge [->] (covm);  

\draw (e) edge [line width=.15cm,white,-] (aleph1)
      (e) edge [->] (aleph1);

\draw[color=sug,line width=.05cm] (4.95,2.5)--(4.95,8.5);
\draw[color=sug,line width=.05cm] (0.75,6.45)--(4.95,6.45); 
\draw[color=sug,line width=.05cm] (-0.55,5.45)--(4.95,5.45);
\draw[color=sug,line width=.05cm] (-0.55,2.5)--(-0.55,8.5);
\draw[color=sug,line width=.05cm] (1.8,6.45)--(1.8,8.5);
\draw[color=sug,line width=.05cm] (0.75,5.45)--(0.75,6.45); 
\draw[color=sug,line width=.05cm] (0.65,4.3)--(0.65,5.45);
\draw[color=sug,line width=.05cm] (0.65,4.3)--(2.5,4.3);
\draw[color=sug,line width=.05cm] (1.4,2.5)--(1.4,4.3);
\draw[color=sug,line width=.05cm] (2.5,4.3)--(2.5,5.45);

\draw[circle, fill=yellow,color=yellow] (1.35,5.95) circle (0.4);
\draw[circle, fill=yellow,color=yellow] (0,4.9) circle (0.4);
\draw[circle, fill=yellow,color=yellow] (0,5.95) circle (0.4);
\draw[circle, fill=yellow,color=yellow] (2.5,3.7) circle (0.4);
\draw[circle, fill=yellow,color=yellow] (8.2,5) circle (0.4);
\draw[circle, fill=yellow,color=yellow] (3.19,7.8) circle (0.4);
\draw[circle, fill=yellow,color=yellow] (1.2,4.9) circle (0.4);
\node at (8.2,5) {$\lambda_6$};
\node at (1.35,5.95) {$\lambda_3$};
\node at (1.2,4.9) {$\lambda_2$};
\node at (0,4.9) {$\lambda_0$};
\node at (0,5.95) {$\lambda_4$};
\node at (2.5,3.7) {$\lambda_1$};
\node at (3.19,7.8) {$\lambda_5$};
}
\end{tikzpicture}
\caption{Constellation of~\autoref{prob:E}~\ref{probE:f}}
\label{FigEf}
\end{figure}

\begin{figure}[ht!]
\centering
\begin{tikzpicture}[scale=1.06]
\small{
\node (aleph1) at (-1,3) {$\aleph_1$};
\node (addn) at (0.5,3){$\add(\Nwf)$};
\node (covn) at (0.5,7){$\cov(\Nwf)$};
\node (nonn) at (9.5,3) {$\non(\Nwf)$} ;
\node (cfn) at (9.5,7) {$\cof(\Nwf)$} ;
\node (addm) at (3.19,3) {$\add(\Mwf)=\subiii{\add(\Ewf)}$} ;
\node (covm) at (6.9,3) {$\cov(\Mwf)$} ;
\node (nonm) at (3.19,7) {$\non(\Mwf)$} ;
\node (cfm) at (6.9,7) {$\cof(\Mwf)=\subiii{\cof(\Ewf)}$} ;
\node (b) at (3.19,5) {$\bfrak$};
\node (d) at (6.9,5) {$\dfrak$};
\node (c) at (11,7) {$\cfrak$};
\node (e) at (2,4.8) {$\efrak$};
\node (none) at (4.12,6) {\subiii{$\non(\Ewf)$}};
\node (cove) at (5.8,4.25) {\subiii{$\cov(\Ewf)$}};
\draw (aleph1) edge[->] (addn)
      (addn) edge[->] (covn)
      (covn) edge [->] (nonm)
      (nonm)edge [->] (cfm)
      (cfm)edge [->] (cfn)
      (cfn) edge[->] (c);

\draw
   (addn) edge [->]  (addm)
   (addm) edge [->]  (covm)
   (covm) edge [->]  (nonn)
   (nonn) edge [->]  (cfn);
\draw (addm) edge [->] (b)
      (b)  edge [->] (nonm);
\draw (covm) edge [->] (d)
      (d)  edge[->] (cfm);
\draw (b) edge [->] (d);

\draw   (none) edge [->] (nonm)
        (none) edge [sub3,->] (cfm)
        (addm) edge [->] (cove);
      
\draw (none) edge [line width=.15cm,white,-] (nonn)
      (none) edge [->] (nonn);
      
\draw (cove) edge [line width=.15cm,white,-] (covn)
      (cove) edge [<-] (covm)
      (cove) edge [<-] (covn);

\draw (addm) edge [line width=.15cm,white,-] (none)
      (addm) edge [sub3,->] (none); 

\draw (cove) edge [line width=.15cm,white,-] (cfm)
      (cove) edge [sub3,->] (cfm);

\draw (e) edge [line width=.15cm,white,-] (none)
      (e) edge [->] (none);   

\draw (e) edge [line width=.15cm,white,-] (covm)
      (e) edge [->] (covm);  

\draw (e) edge [line width=.15cm,white,-] (aleph1)
      (e) edge [->] (aleph1);

\draw[color=sug,line width=.05cm] (4.95,2.5)--(4.95,8.5);
\draw[color=sug,line width=.05cm] (0.75,6.45)--(4.95,6.45); 
\draw[color=sug,line width=.05cm] (-0.55,5.45)--(4.95,5.45);
\draw[color=sug,line width=.05cm] (-0.55,2.5)--(-0.55,8.5);
\draw[color=sug,line width=.05cm] (1.8,6.45)--(1.8,8.5);
\draw[color=sug,line width=.05cm] (0.75,5.45)--(0.75,6.45); 
\draw[color=sug,line width=.05cm] (0.65,4.3)--(0.65,5.45);
\draw[color=sug,line width=.05cm] (0.65,4.3)--(2.5,4.3);
\draw[color=sug,line width=.05cm] (1.4,2.5)--(1.4,4.3);
\draw[color=sug,line width=.05cm] (2.5,4.3)--(2.5,5.45);

\draw[circle, fill=yellow,color=yellow] (1.35,5.95) circle (0.4);
\draw[circle, fill=yellow,color=yellow] (0,4.9) circle (0.4);
\draw[circle, fill=yellow,color=yellow] (0,5.95) circle (0.4);
\draw[circle, fill=yellow,color=yellow] (2.5,3.7) circle (0.4);
\draw[circle, fill=yellow,color=yellow] (8.2,5) circle (0.4);
\draw[circle, fill=yellow,color=yellow] (3.19,7.8) circle (0.4);
\draw[circle, fill=yellow,color=yellow] (1.2,4.9) circle (0.4);
\node at (8.2,5) {$\lambda_6$};
\node at (1.35,5.95) {$\lambda_3$};
\node at (1.2,4.9) {$\lambda_1$};
\node at (0,4.9) {$\lambda_0$};
\node at (0,5.95) {$\lambda_4$};
\node at (2.5,3.7) {$\lambda_2$};
\node at (3.19,7.8) {$\lambda_5$};
}
\end{tikzpicture}
\caption{Constellation of~\autoref{prob:E}~\ref{probE:g}}
\label{FigEg}
\end{figure}

\begin{figure}[ht!]
\centering
\begin{tikzpicture}[scale=1.06]
\small{
\node (aleph1) at (-1,3) {$\aleph_1$};
\node (addn) at (0.5,3){$\add(\Nwf)$};
\node (covn) at (0.5,7){$\cov(\Nwf)$};
\node (nonn) at (9.5,3) {$\non(\Nwf)$} ;
\node (cfn) at (9.5,7) {$\cof(\Nwf)$} ;
\node (addm) at (3.19,3) {$\add(\Mwf)=\subiii{\add(\Ewf)}$} ;
\node (covm) at (6.9,3) {$\cov(\Mwf)$} ;
\node (nonm) at (3.19,7) {$\non(\Mwf)$} ;
\node (cfm) at (6.9,7) {$\cof(\Mwf)=\subiii{\cof(\Ewf)}$} ;
\node (b) at (3.19,5) {$\bfrak$};
\node (d) at (6.9,5) {$\dfrak$};
\node (c) at (11,7) {$\cfrak$};
\node (e) at (2,4.8) {$\efrak$};
\node (none) at (4.12,6) {\subiii{$\non(\Ewf)$}};
\node (cove) at (5.8,4.25) {\subiii{$\cov(\Ewf)$}};
\draw (aleph1) edge[->] (addn)
      (addn) edge[->] (covn)
      (covn) edge [->] (nonm)
      (nonm)edge [->] (cfm)
      (cfm)edge [->] (cfn)
      (cfn) edge[->] (c);

\draw
   (addn) edge [->]  (addm)
   (addm) edge [->]  (covm)
   (covm) edge [->]  (nonn)
   (nonn) edge [->]  (cfn);
\draw (addm) edge [->] (b)
      (b)  edge [->] (nonm);
\draw (covm) edge [->] (d)
      (d)  edge[->] (cfm);
\draw (b) edge [->] (d);

\draw   (none) edge [->] (nonm)
        (none) edge [sub3,->] (cfm)
        (addm) edge [->] (cove);
      
\draw (none) edge [line width=.15cm,white,-] (nonn)
      (none) edge [->] (nonn);
      
\draw (cove) edge [line width=.15cm,white,-] (covn)
      (cove) edge [<-] (covm)
      (cove) edge [<-] (covn);

\draw (addm) edge [line width=.15cm,white,-] (none)
      (addm) edge [sub3,->] (none); 

\draw (cove) edge [line width=.15cm,white,-] (cfm)
      (cove) edge [sub3,->] (cfm);

\draw (e) edge [line width=.15cm,white,-] (none)
      (e) edge [->] (none);   

\draw (e) edge [line width=.15cm,white,-] (covm)
      (e) edge [->] (covm);  

\draw (e) edge [line width=.15cm,white,-] (aleph1)
      (e) edge [->] (aleph1);

\draw[color=sug,line width=.05cm] (4.95,2.5)--(4.95,8.5);
\draw[color=sug,line width=.05cm] (0.75,6.45)--(4.95,6.45); 
\draw[color=sug,line width=.05cm] (-0.55,5.45)--(4.95,5.45);
\draw[color=sug,line width=.05cm] (-0.55,2.5)--(-0.55,8.5);
\draw[color=sug,line width=.05cm] (1.8,6.45)--(1.8,8.5);
\draw[color=sug,line width=.05cm] (0.75,5.45)--(0.75,6.45); 
\draw[color=sug,line width=.05cm] (0.65,4.3)--(0.65,5.45);
\draw[color=sug,line width=.05cm] (0.65,4.3)--(2.5,4.3);
\draw[color=sug,line width=.05cm] (1.4,2.5)--(1.4,4.3);
\draw[color=sug,line width=.05cm] (2.5,4.3)--(2.5,5.45);

\draw[circle, fill=yellow,color=yellow] (1.35,5.95) circle (0.4);
\draw[circle, fill=yellow,color=yellow] (0,4.9) circle (0.4);
\draw[circle, fill=yellow,color=yellow] (0,5.95) circle (0.4);
\draw[circle, fill=yellow,color=yellow] (2.5,3.7) circle (0.4);
\draw[circle, fill=yellow,color=yellow] (8.2,5) circle (0.4);
\draw[circle, fill=yellow,color=yellow] (3.19,7.8) circle (0.4);
\draw[circle, fill=yellow,color=yellow] (1.2,4.9) circle (0.4);
\node at (8.2,5) {$\lambda_6$};
\node at (1.35,5.95) {$\lambda_4$};
\node at (1.2,4.9) {$\lambda_1$};
\node at (0,4.9) {$\lambda_0$};
\node at (0,5.95) {$\lambda_3$};
\node at (2.5,3.7) {$\lambda_2$};
\node at (3.19,7.8) {$\lambda_5$};
}
\end{tikzpicture}
\caption{Constellation of~\autoref{prob:E}~\ref{probE:h}}
\label{FigEh}
\end{figure}

\begin{figure}[ht!]
\centering
\begin{tikzpicture}[scale=1.06]
\small{
\node (aleph1) at (-1,3) {$\aleph_1$};
\node (addn) at (0.5,3){$\add(\Nwf)$};
\node (covn) at (0.5,7){$\cov(\Nwf)$};
\node (nonn) at (9.5,3) {$\non(\Nwf)$} ;
\node (cfn) at (9.5,7) {$\cof(\Nwf)$} ;
\node (addm) at (3.19,3) {$\add(\Mwf)=\subiii{\add(\Ewf)}$} ;
\node (covm) at (6.9,3) {$\cov(\Mwf)$} ;
\node (nonm) at (3.19,7) {$\non(\Mwf)$} ;
\node (cfm) at (6.9,7) {$\cof(\Mwf)=\subiii{\cof(\Ewf)}$} ;
\node (b) at (3.19,5) {$\bfrak$};
\node (d) at (6.9,5) {$\dfrak$};
\node (c) at (11,7) {$\cfrak$};
\node (e) at (2,4.8) {$\efrak$};
\node (none) at (4.12,6) {\subiii{$\non(\Ewf)$}};
\node (cove) at (5.8,4.25) {\subiii{$\cov(\Ewf)$}};
\draw (aleph1) edge[->] (addn)
      (addn) edge[->] (covn)
      (covn) edge [->] (nonm)
      (nonm)edge [->] (cfm)
      (cfm)edge [->] (cfn)
      (cfn) edge[->] (c);

\draw
   (addn) edge [->]  (addm)
   (addm) edge [->]  (covm)
   (covm) edge [->]  (nonn)
   (nonn) edge [->]  (cfn);
\draw (addm) edge [->] (b)
      (b)  edge [->] (nonm);
\draw (covm) edge [->] (d)
      (d)  edge[->] (cfm);
\draw (b) edge [->] (d);

\draw   (none) edge [->] (nonm)
        (none) edge [sub3,->] (cfm)
        (addm) edge [->] (cove);
      
\draw (none) edge [line width=.15cm,white,-] (nonn)
      (none) edge [->] (nonn);
      
\draw (cove) edge [line width=.15cm,white,-] (covn)
      (cove) edge [<-] (covm)
      (cove) edge [<-] (covn);

\draw (addm) edge [line width=.15cm,white,-] (none)
      (addm) edge [sub3,->] (none); 

\draw (cove) edge [line width=.15cm,white,-] (cfm)
      (cove) edge [sub3,->] (cfm);

\draw (e) edge [line width=.15cm,white,-] (none)
      (e) edge [->] (none);   

\draw (e) edge [line width=.15cm,white,-] (covm)
      (e) edge [->] (covm);  

\draw (e) edge [line width=.15cm,white,-] (aleph1)
      (e) edge [->] (aleph1);

\draw[color=sug,line width=.05cm] (4.95,2.5)--(4.95,8.5);
\draw[color=sug,line width=.05cm] (0.75,6.45)--(4.95,6.45); 
\draw[color=sug,line width=.05cm] (-0.55,5.45)--(4.95,5.45);
\draw[color=sug,line width=.05cm] (-0.55,2.5)--(-0.55,8.5);
\draw[color=sug,line width=.05cm] (1.8,6.45)--(1.8,8.5);
\draw[color=sug,line width=.05cm] (0.75,5.45)--(0.75,6.45); 
\draw[color=sug,line width=.05cm] (0.65,4.3)--(0.65,5.45);
\draw[color=sug,line width=.05cm] (0.65,4.3)--(2.5,4.3);
\draw[color=sug,line width=.05cm] (1.4,2.5)--(1.4,4.3);
\draw[color=sug,line width=.05cm] (2.5,4.3)--(2.5,5.45);

\draw[circle, fill=yellow,color=yellow] (1.35,5.95) circle (0.4);
\draw[circle, fill=yellow,color=yellow] (0,4.9) circle (0.4);
\draw[circle, fill=yellow,color=yellow] (0,5.95) circle (0.4);
\draw[circle, fill=yellow,color=yellow] (2.5,3.7) circle (0.4);
\draw[circle, fill=yellow,color=yellow] (8.2,5) circle (0.4);
\draw[circle, fill=yellow,color=yellow] (3.19,7.8) circle (0.4);
\draw[circle, fill=yellow,color=yellow] (1.2,4.9) circle (0.4);
\node at (8.2,5) {$\lambda_6$};
\node at (1.35,5.95) {$\lambda_4$};
\node at (1.2,4.9) {$\lambda_1$};
\node at (0,4.9) {$\lambda_0$};
\node at (0,5.95) {$\lambda_2$};
\node at (2.5,3.7) {$\lambda_3$};
\node at (3.19,7.8) {$\lambda_5$};
}
\end{tikzpicture}
\caption{Constellation of~\autoref{prob:E}~\ref{probE:i}}
\label{FigEi}
\end{figure}

\begin{figure}[H]
\centering
\begin{tikzpicture}[scale=1.06]
\small{
\node (aleph1) at (-1,3) {$\aleph_1$};
\node (addn) at (0.5,3){$\add(\Nwf)$};
\node (covn) at (0.5,7){$\cov(\Nwf)$};
\node (nonn) at (9.5,3) {$\non(\Nwf)$} ;
\node (cfn) at (9.5,7) {$\cof(\Nwf)$} ;
\node (addm) at (3.19,3) {$\add(\Mwf)=\subiii{\add(\Ewf)}$} ;
\node (covm) at (6.9,3) {$\cov(\Mwf)$} ;
\node (nonm) at (3.19,7) {$\non(\Mwf)$} ;
\node (cfm) at (6.9,7) {$\cof(\Mwf)=\subiii{\cof(\Ewf)}$} ;
\node (b) at (3.19,5) {$\bfrak$};
\node (d) at (6.9,5) {$\dfrak$};
\node (c) at (11,7) {$\cfrak$};
\node (e) at (2,4.8) {$\efrak$};
\node (none) at (4.12,6) {\subiii{$\non(\Ewf)$}};
\node (cove) at (5.8,4.25) {\subiii{$\cov(\Ewf)$}};
\draw (aleph1) edge[->] (addn)
      (addn) edge[->] (covn)
      (covn) edge [->] (nonm)
      (nonm)edge [->] (cfm)
      (cfm)edge [->] (cfn)
      (cfn) edge[->] (c);

\draw
   (addn) edge [->]  (addm)
   (addm) edge [->]  (covm)
   (covm) edge [->]  (nonn)
   (nonn) edge [->]  (cfn);
\draw (addm) edge [->] (b)
      (b)  edge [->] (nonm);
\draw (covm) edge [->] (d)
      (d)  edge[->] (cfm);
\draw (b) edge [->] (d);

\draw   (none) edge [->] (nonm)
        (none) edge [sub3,->] (cfm)
        (addm) edge [->] (cove);
      
\draw (none) edge [line width=.15cm,white,-] (nonn)
      (none) edge [->] (nonn);
      
\draw (cove) edge [line width=.15cm,white,-] (covn)
      (cove) edge [<-] (covm)
      (cove) edge [<-] (covn);

\draw (addm) edge [line width=.15cm,white,-] (none)
      (addm) edge [sub3,->] (none); 

\draw (cove) edge [line width=.15cm,white,-] (cfm)
      (cove) edge [sub3,->] (cfm);

\draw (e) edge [line width=.15cm,white,-] (none)
      (e) edge [->] (none);   

\draw (e) edge [line width=.15cm,white,-] (covm)
      (e) edge [->] (covm);  

\draw (e) edge [line width=.15cm,white,-] (aleph1)
      (e) edge [->] (aleph1);

\draw[color=sug,line width=.05cm] (4.95,2.5)--(4.95,8.5);
\draw[color=sug,line width=.05cm] (0.75,6.45)--(4.95,6.45); 
\draw[color=sug,line width=.05cm] (-0.55,5.45)--(4.95,5.45);
\draw[color=sug,line width=.05cm] (-0.55,2.5)--(-0.55,8.5);
\draw[color=sug,line width=.05cm] (1.8,6.45)--(1.8,8.5);
\draw[color=sug,line width=.05cm] (0.75,5.45)--(0.75,6.45); 
\draw[color=sug,line width=.05cm] (0.65,4.3)--(0.65,5.45);
\draw[color=sug,line width=.05cm] (0.65,4.3)--(2.5,4.3);
\draw[color=sug,line width=.05cm] (1.4,2.5)--(1.4,4.3);
\draw[color=sug,line width=.05cm] (2.5,4.3)--(2.5,5.45);

\draw[circle, fill=yellow,color=yellow] (1.35,5.95) circle (0.4);
\draw[circle, fill=yellow,color=yellow] (0,4.9) circle (0.4);
\draw[circle, fill=yellow,color=yellow] (0,5.95) circle (0.4);
\draw[circle, fill=yellow,color=yellow] (2.5,3.7) circle (0.4);
\draw[circle, fill=yellow,color=yellow] (8.2,5) circle (0.4);
\draw[circle, fill=yellow,color=yellow] (3.19,7.8) circle (0.4);
\draw[circle, fill=yellow,color=yellow] (1.2,4.9) circle (0.4);
\node at (8.2,5) {$\lambda_6$};
\node at (1.35,5.95) {$\lambda_4$};
\node at (1.2,4.9) {$\lambda_2$};
\node at (0,4.9) {$\lambda_0$};
\node at (0,5.95) {$\lambda_1$};
\node at (2.5,3.7) {$\lambda_3$};
\node at (3.19,7.8) {$\lambda_5$};
}
\end{tikzpicture}
\caption{Constellation of~\autoref{prob:E}~\ref{probE:j}}
\label{FigEj}
\end{figure}

\subsection*{Acknowledgments}

This note has been developed specifically for the proceedings of the RIMS Set Theory Workshop 2023 \textit{Large Cardinals and the Continuu}, held at Kyoto University RIMS. The author expresses gratitude to Professor Hiroshi Fujita from Ehime University for letting him participate with
a talk at the Workshop and submit a paper to this proceedings.

The author would also like to thank: Diego A. Mej\'ia, Adam Marton, and Jaroslav \v{S}upina, for revision and suggestions to this paper, which helped us deliver a more presentable final version.

This work was partially supported by the Slovak Research and Development Agency under Contract No.~APVV-20-0045 and by Pavol Jozef \v{S}af\'arik University in Ko\v{s}ice at a postdoctoral position.

{\small
\bibliography{left}
\bibliographystyle{alpha}
}

\end{document}